
\documentclass[10pt,oneside]{amsart}
\usepackage{graphicx}
\usepackage[dvipsnames,usenames]{xcolor}
\usepackage[colorlinks=true,urlcolor=ForestGreen,linkcolor=ForestGreen,citecolor=ForestGreen]{hyperref}
\hypersetup{
	linkbordercolor={1 0 0}, 
	citebordercolor={0 1 0} 
}
\usepackage{mathrsfs}
\usepackage[UKenglish]{babel}
\usepackage{amssymb,amsxtra}
\usepackage{enumerate}
\usepackage{bbm}
\usepackage{relsize}
\usepackage{tikz}

\usepackage{amscd}

\usepackage{xcolor}
\usepackage[all]{xy}

\usetikzlibrary{trees,decorations.pathmorphing,decorations.markings,matrix,shapes}

\iftrue
\makeatletter
\def\@settitle{%
	\vspace*{-20pt}
	\begin{flushleft}%
		\baselineskip14\p@\relax
		\normalfont\bfseries\LARGE
		\@title
	\end{flushleft}%
}
\def\@setauthors{%
	\begingroup
	\def\thanks{\protect\thanks@warning}%
	\trivlist
	\large \@topsep30\p@\relax
	\advance\@topsep by -\baselineskip
	\item\relax
	\author@andify\authors
	\def\\{\protect\linebreak}%
	\authors
	\ifx\@empty\contribs
	\else
	,\penalty-3 \space \@setcontribs
	\@closetoccontribs
	\fi
	\normalfont
	\@setaddresses
	\endtrivlist
	\endgroup
}
\def\@setaddresses{\par
	\nobreak \begingroup\raggedright
	\small
	\def\author##1{\nobreak\addvspace\smallskipamount}%
	\def\\{\unskip, \ignorespaces}%
	\interlinepenalty\@M
	\def\address##1##2{\begingroup
		\par\addvspace\bigskipamount\noindent
		\@ifnotempty{##1}{(\ignorespaces##1\unskip) }%
		{\ignorespaces##2}\par\endgroup}%
	\def\curraddr##1##2{\begingroup
		\@ifnotempty{##2}{\nobreak\noindent\curraddrname
			\@ifnotempty{##1}{, \ignorespaces##1\unskip}\/:\space
			##2\par}\endgroup}%
	\def\email##1##2{\begingroup
		\@ifnotempty{##2}{\smallskip\nobreak\noindent E-mail address%
			\@ifnotempty{##1}{, \ignorespaces##1\unskip}\/:\space
			\ttfamily##2\par}\endgroup}%
	\def\urladdr##1##2{\begingroup
		\def~{\char`\~}%
		\@ifnotempty{##2}{\nobreak\noindent\urladdrname
			\@ifnotempty{##1}{, \ignorespaces##1\unskip}\/:\space
			\ttfamily##2\par}\endgroup}%
	\addresses
	\endgroup
	\global\let\addresses=\@empty
}
\def\@setabstracta{%
	\ifvoid\abstractbox
	\else
	\skip@25\p@ \advance\skip@-\lastskip
	\advance\skip@-\baselineskip \vskip\skip@
	\box\abstractbox
	\prevdepth\z@ 
	\vskip-15pt
	\fi
}

\def\ps@headings
{\ps@empty
	\def\@evenhead{%
	  \setTrue{runhead}%
		\normalfont\scriptsize
		\rlap{\thepage}\hfill
		\def\thanks{\protect\thanks@warning}%
		\leftmark{}{}}%
	\def\@oddhead{%
		\setTrue{runhead}%
		\normalfont\scriptsize
		\def\thanks{\protect\thanks@warning}%
\rightmark{}{}\hfill \llap{\thepage}}%
\let\@mkboth\markboth
}\ps@headings

\def\section{\@startsection{section}{1}%
	\z@{-1.2\linespacing\@plus-.5\linespacing}{.8\linespacing}%
	{\normalfont\bfseries\Large}}
\def\subsection{\@startsection{subsection}{2}%
	\z@{-.8\linespacing\@plus-.3\linespacing}{.3\linespacing\@plus.2\linespacing}%
	{\normalfont\bfseries\large}}
\def\subsubsection{\@startsection{subsubsection}{3}%
	\z@{.7\linespacing\@plus.1\linespacing}{-1.5ex}%
	{\normalfont\bfseries}}
\def\@secnumfont{\bfseries}
\makeatother
\fi 


\makeatother

\newtheorem{theorem}{Theorem}[section]
\newtheorem*{theorem*}{Theorem}

\newtheorem{proposition}[theorem]{Proposition}
\newtheorem{fact}[theorem]{Fact}

\newtheorem{mth}[theorem]{Main Theorem}

\theoremstyle{definition}
\newtheorem{definition}[theorem]{Definition}

\newtheorem{example}[theorem]{Example}
\newtheorem{remark}[theorem]{Remark}

\numberwithin{equation}{section}


\newcommand{\tart}[2]{\begin{tabular}{c} #1 \\ #2\end{tabular}}
\newcommand{\incg}[2]{\includegraphics[height=#1em]{#2.eps}}
\newcommand{\targ}[3]{\tart{\incg{#1}{#2}}{#3}}

\newcommand{\CC}{\mathcal{C}}

\newcommand{\MM}{\mathcal{M}}

\newcommand{\gr}{\mathrm{gr}}

\newcommand{\CubeFlowCat}{\mathscr{C}_C}
\newcommand{\Moduli}{\mathcal{M}}
\newcommand{\vect}{\overline}
\newcommand{\sm}{\setminus}
\newcommand{\gen}[1]{#1}



\newcommand{\np}{\pagebreak}
\newcommand{\vs}{\vskip10mm}
\newcommand{\bs}{\smallbreak}
\newcommand{\bb}{\bigbreak}
\newcommand{\h}{\noindent}
\newcommand{\R}{\mathbb R}
\newcommand{\N}{\mathbb N}
\newcommand{\E}{\mathbb E}
\newcommand{\x}{\times}
\newcommand{\Hom}{\mathrm{Hom}}

\newcommand{\Z}{\mathbb{Z}}

\newcommand{\ind}{\mathrm{ind}}

\textheight=620pt 
\textwidth=480pt 
\oddsidemargin=15pt
\evensidemargin=15pt


\begin{document}
	
\pagestyle{plain}

{\huge

\h{\bf
Khovanov-Lipshitz-Sarkar homotopy type for links in \\
thickened surfaces}
\vs

\h{Louis H. Kauffman
, Igor Mikhailovich Nikonov,
and
Eiji Ogasa}
\vs

\h{\bf Abstract.}  
We define a Khovanov-Lipshitz-Sarkar stable homotopy type for the homotopical Khovanov homology of links in the thickened torus 
%
after the authors introduced 
that in the case of higher genus surfaces 
in the previous paper of this one. 
\vs
\tableofcontents

\vs
\section{Introduction}\label{intro}

\h
In this paper, a surface means a closed oriented surface unless otherwise stated.
Of course, a surface may or may not be the sphere.
We discuss links in thickened surfaces.
If $\mathcal L$ is a link in a thickened surface,
then a link diagram $L$ which represents  $\mathcal L$  lies in the surface.
 Since our theory has a special behavior at genus one, in this paper a higher genus surface means a surface with genus greater than one unless otherwise stated.
 In the previous paper~\cite{KauffmanNikonovOgasa} of this one,
  the authors discussed the higher genus case.
 In the present paper, we mainly discuss the torus case.
\\

Let $\mathcal K$ be a link in the thickened torus.
Let $K$ be a link diagram in the torus which represents $\mathcal K$.
Call a poset associated with a decorated Kauffman state, a {\it dposet}.
See \cite{KauffmanNikonovOgasa, LSk} for
decorated Kauffman states, or decorated resolution configurations.
Dposets are defined for all pairs of enhanced Kauffman states.

We discuss the following case, which will be introduced in \S\ref{mod}.
We choose the right pair or the left one for the ladybug Kauffman state
(see \cite{KauffmanNikonovOgasa, LSk}).
We determine a degree 1 homology class $\lambda$ of $T^2$.
After that,
we define a cubic moduli for any dposet of $K$,
and construct
{\it Khovanov-Lipshitz-Sarkar stable homotopy type}
for the homotopical Khovanov chain complex 
(\cite{KauffmanNikonovOgasa, MN}) of $K$.
We define
{\it Khovanov-Lipshitz-Sarkar stable homotopy type}
for $\mathcal K$ to be that for $K$.

Make
the set of all Khovanov-Lipshitz-Sarkar stable homotopy types for all  $\lambda$ and for a fixed choice of the right and the left. It
is a link type invariant. 
There are infinitely many  $\lambda$ but there are finite numbers of stable homotopy types.
Recall in \cite{KauffmanNikonovOgasa} that in the higher genus case,
we give only one stable homotopy type for any link diagram
after we choose the right pair or the left one,
and therefore, for any link type.

We prove that
the set of our Khovanov-Lipshitz-Sarkar stable homotopy types
is stronger than
the homotopical  Khovanov homology of $\mathcal K$. 
 It is a meaningful Khovanov stable homotopy type of links in
a 3-manifold other than the 3-sphere.


\begin{mth}\label{main}
$(1)$
We define
 Khovanov-Lipshitz-Sarkar stable homotopy type
 for
 `links in the thickened torus, a degree 1 homology class $\lambda$, and
 a choice of the right and the left pair'.

\bb\h $(2)$
Our new invariants $($the stable homotopy type$)$ in $(1)$ above 
gives an invariant stronger than
the  homotopical Khovanov  homology
as invariants of links in the thickened torus.
We use the second Steenrod square to prove it.
\end{mth}
\bb

\subsection{Background} 
Jones 
\cite{Jones} 
discovered the Jones polynomial of links in the 3-sphere.
Kauffman 
\cite{Kauffmanstate} 
made a very short alternative proof of 
\cite{Jones} when he invented Kauffman states.
Khovanov 
\cite{K} introduced 
Khovanov homology  of links in the 3-sphere 
when making enhanced Kauffman states from Kauffman states.
 Bar-Natan \cite{B}
proved that 
Khovanov homology  of links in the 3-sphere 
is a topological invariant stronger than the Jones polynomial of those.
Asaeda, Przytycki, and Sikora \cite{APS} 
extended 
Khovanov homology to the case of thickened surfaces.
Manturov and Nikonov \cite{MN}
made an alternative definition of \cite{APS} in the $\Z_2$ coefficient case:
they call it {\it homotopical Khovanov homology}.
Kauffman, Nikonov and Ogasa \cite{KauffmanNikonovOgasa} 
extended \cite{MN} to the $\Z$ coefficient case. 

Lipshitz and Sarkar 
\cite{LSk} 
invented a consistent construction of 
a stable homotopy type, 
of a CW complex for any given link $L$ in the 3-sphere 
whose homology is Khovanov homology: 
It is called 
{\it 
Khovanov-Lipshitz-Sarkar stable homotopy type of links in $S^3$}.   
Lipshitz and Sarkar 
\cite{LSs} 
introduced how to calculate the second Steenrod square of Khovanov-Lipshitz-Sarkar stable homotopy type of links in $S^3$. 
Seed 
\cite{Seed} 
calculated it by their method on computer and proved 
that the second Steenrod square of 
Khovanov-Lipshitz-Sarkar stable homotopy type of links in $S^3$
is stronger than Khovanov homology of links in $S^3$. 
Kauffman, Nikonov and Ogasa \cite{KauffmanNikonovOgasa} 
extended 
Khovanov-Lipshitz-Sarkar stable homotopy type 
to the case of thickened higher genus case. 
\\

\h Remark: 
We have open questions 
whether we can extend the Jones polynomial, 
Khovanov homology, and 
Khovanov-Lipshitz-Sarkar homotopy type 
to any other $n$-manifold than the 3-sphere, respectively.

Some partial solutions in the Jones polynomial and 
Khovanov homology cases are given in 
 \cite{APS, 
Bourgoin, 
Drobotukhina, 
DKK, 
Kauffman1,Kauffman, Kauffmani, 
Man,  
Igor, 
Ru, 
Tub, 
Viro}: No result has obtained except for the thickened surface or $\R P^3$ case. 

Note that neither \cite {W} or  
 \cite[Theorem 3.3.3, page 560]{RT} gave a partial answer.
\\

The above result \cite{KauffmanNikonovOgasa} is the first meaningful partial solution for KLS homotopy type. This paper is the second one.
\\

\subsection{Homotopical Khovanov homology}\label{subsecHKH}\bb
In this subsection we review {\it $\Z$-homotopical Khovanov homology}  
(\cite{KauffmanNikonovOgasa, MN}).

\begin{definition}\label{def:resolution_configuration}
Let $F$ be a closed oriented surface.
A {\it resolution configuration} $D$, 
or a {\it Kauffman state},  
on the surface $F$ is a pair $(Z(D), A(D))$,
where $Z(D)$ is a set of pairwise-disjoint embedded circles in $F$,
and $A(D)$ is an ordered collection of disjoint arcs embedded in $F$,
with $A(D)\cap Z(D)=\partial A(D)$.

The number of arcs in $A(D)$ is the {\it index} of the resolution configuration $D$,
denoted by $\ind(D)$.

A {\it labeled resolution configuration}, 
or an {\it enhanced Kauffman state}, 
 is a pair $(D, x)$ of a resolution configuration
$D$ and a {\it labeling} $x$ of each element of $Z(D)$ by either $x_+$ or $x_-$.

\end{definition}

\begin{example}
Consider a link $\mathcal L$ in the thickening $F\x[-1,1]$ of $F$. Let $L\subset F$ be a diagram of the link $\mathcal L$.
Assume that the diagram $L$ has $n$ crossings ordered somehow.

For any vector $v\in\{0,1\}^n$ one can define the {\it associated resolution configuration} $D_L(v)$
obtained by taking the resolution of the diagram $L$ corresponding to $v$
(that is, taking the 0-resolution at the $i$-th crossing
if $v_i =0$, and the 1-resolution otherwise)
and then placing arcs corresponding to each of the crossings labeled
by 0's in $v$ (that is, at the $i$-th crossing if $v_i=0$), see Fig.~\ref{fig:resolution}.
For $v=(v_1,...,v_n)$, define $|v|$ to be $(v_1)^2+...+(v_n)^2=v_1+...+v_n$, 
which is called Manhattan norm in \cite{LSk}. 
\label{pageMan}

The index of the associated configuration is $\ind(D_L(v))=n-|v|$.

Let $\Lambda(L)$ be the set of all labeling with $x_+$ and $x_-$ of the associated resolution configurations of the link diagram $L$. 
Note that the elements of this set are enhanced Kauffman states:  
We sometimes call them  Khovanov basis elements (associated with $L$). 

\begin{figure}
\includegraphics[width=120mm]{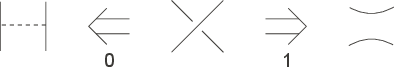}
\caption{\bf The 0- and 1-resolutions}\label{fig:resolution}
\end{figure}
\end{example}

The set $\Lambda(L)$ of Khovanov basis elements has several grading on it.

Let $n$ (respectively, $n_+$, $n_-$) be the number of crossings  (respectively, positive crossings, negative crossings) of $L$.

For a labeled resolution configurations $(D_L(u), x)\in\Lambda(L)$, its
{\em homological grading} is
\begin{equation}\label{eq:homological_grading}
\gr_h(D_{L(u)}, x) = -n_- + |u|,
\end{equation}
and the {\em quantum grading} is

{\normalsize
\begin{equation}\label{eq:quantum_grading}
\gr_q(D_L(u), x) = n_+ - 2n_- + |u|+
\sharp\{Z\in Z(D_L(u)) | x(Z) = x_+\}
-\sharp\{Z\in Z(D_L(u)) | x(Z) = x_-\}.
\end{equation}
}

Let us consider the set $\mathfrak L = [S^1; F]$
of all the homotopy classes of free oriented loops in $F$.  
Let $\bigcirc\in \mathfrak L$ be the homotopy class of contractible loops.
For any closed curve $\gamma$,
one can consider the curve $-\gamma$ obtained from $\gamma$
by the orientation change.
Let $\mathfrak H$ be the quotient group of the free abelian group
with generator set $\mathfrak L$ modulo the relations
 $\bigcirc= 0$ and $[\gamma]=[-\gamma]$ for all free loops $\gamma$.

Define the {\em homotopical grading} of the Khovanov basis element $(D_L(u),x)$ as follows

\begin{equation}\label{eq:homotopical_grading}
\gr_{\mathfrak H}(D_L(u), x)=
\sum_{Z\in Z(D_L(u))} 
\deg x(Z)\cdot[Z] \in\mathfrak H,
\end{equation}
\h
where $\deg(x_\pm)=\pm 1$.

\begin{definition}\label{def:surgery}
  Given a resolution configuration $D$ and a subset $A'\subseteq A(D)$
  there is a new resolution configuration $s_{A'}(D)$, the
  {\em surgery of $D$ along $A'$}, obtained as follows.
  The circles
  $Z(s_{A'}(D))$ of $s_{A'}(D)$ are obtained by performing embedded
  surgery along the arcs in $A'$; in other words, $Z(s_{A'}(D))$ is
  obtained by deleting a neighborhood of $ A'$ from $Z(D)$ and
  then connecting the endpoints of the result using parallel
  translates of $A'$.
  The arcs of $s_{A'}(D)$ are the arcs of $D$ not
  in $A'$, i.e., $A(s_{A'}(D))=A(D)- A'$.

  Let $s(D)=s_{A(D)}(D)$ denote the maximal surgery on $D$.
\end{definition}

\begin{definition}\label{2.10}  
There is a partial order $\prec$ on labeled resolution configurations defined as
follows.
We declare that $(E, y)\prec(D, x)$ if:

\begin{enumerate}
\item
$D$ is obtained from $E$ by surgering along a single arc of $A(E)$

\item
The labelings $x$ and $y$ induce the same labeling on $D\cap E = E\cap D$.

\item
$\gr_q(E, y)=\gr_q(D, x)$, $\gr_{\mathfrak H}(E, y)=\gr_{\mathfrak H}(D, x)$.

\end{enumerate}

The possible cases of the order are drawn in Fig.~\ref{resolC} and~\ref{resolNC}.

\begin{figure}
\includegraphics[width=0.6\textwidth]{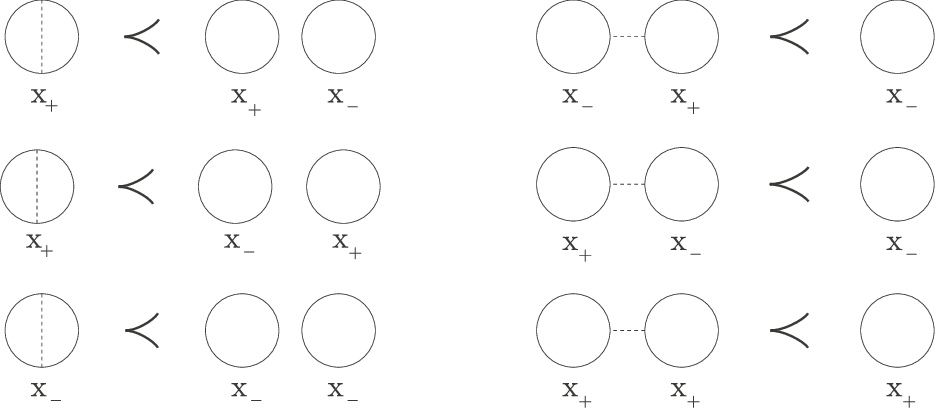}
\caption{{\bf
The partial order of labeled resolution configurations with contractible circles
}\label{resolC}}
\end{figure}

\begin{figure}
\includegraphics[width=0.6\textwidth]{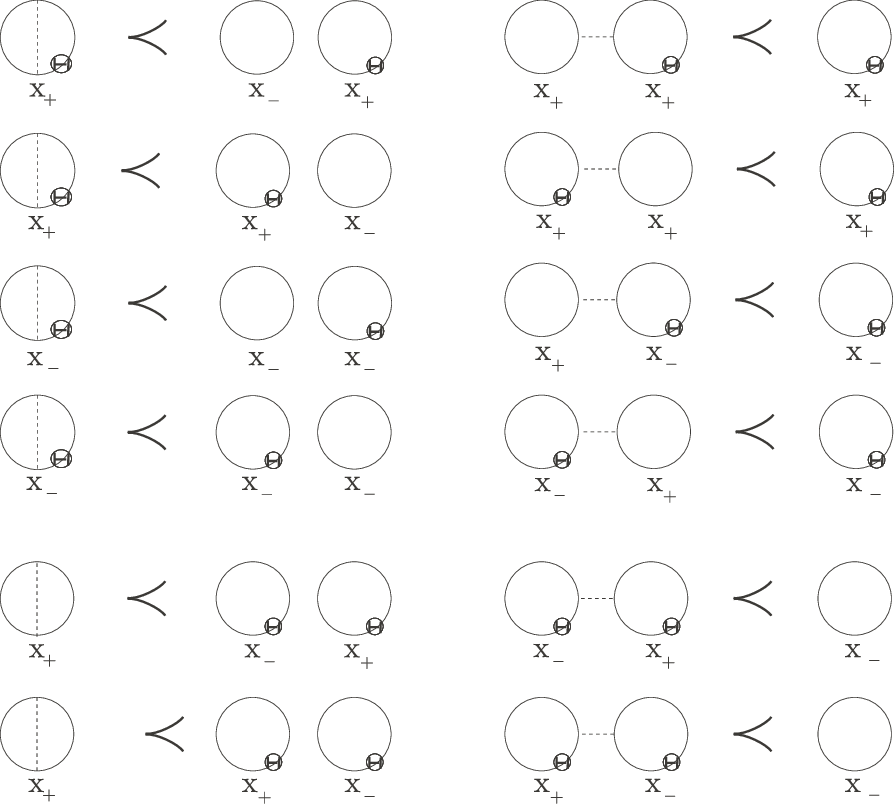}
\caption{{\bf
The partial order of labeled resolution configurations with non-contractible circles.
Non-contractible circles are marked with (H).
}
\label{resolNC}}
\end{figure}

Now, we close the order $\prec$ by transitivity.
\end{definition}

\begin{definition}\label{korekos}
Given an oriented link diagram $L$ with $n$ crossings and an ordering of the crossings in $L$,
the {\em Khovanov chain} complex  $KC(L)$ is defined as the $\Z$-module freely generated
by labeled resolution configurations of the form $(D_L(u), x)$ for $u\in\{0, 1\}^n$.
Thus, the set of all labeled resolution configurations of $L$ is
a basis of $KC(L)$.

The {\em Khovanov differential} preserves the quantum grading and the homotopical grading,
increases the homological grading by 1, and is defined as

\begin{equation}\label{bibun}
{\displaystyle
\delta(D_L(v),y)
=
\sum_{(D_L(u),x)\succ (D_L(v),y)\colon |u|=|v|+1}
(-1)^{s_0(\mathcal C_{u,v})
}}(D_L(u),x),
\end{equation}

\noindent
where for $u= (\epsilon_1,..., \epsilon_{i-1}, 1, \epsilon_{i+1}, . . . , \epsilon_n)$ and
$v=(\epsilon_1,..., \epsilon_{i-1}, 0,\epsilon_{i+1}, . . . , \epsilon_n)$,
one defines $s_0(C_{u,v}) = \epsilon_1+\cdot\cdot\cdot+ \epsilon_{i-1}$.
\end{definition}

\begin{theorem}\label{thmKNO}
$($\cite{KauffmanNikonovOgasa}$)$ 
The homology $KH(L)$ of the complex $(KC(L),\delta)$ are the 
{\em $\Z$-coefficient Khovanov homology} of the link $L$.
\end{theorem}

\begin{definition}\label{def:decorated_resolution_configuration}
A {\em decorated resolution configuration} is a triple $(D, x, y)$
where $D$ is a resolution configuration and
$x$ (respectively,  $y$) is a labeling of each component of $Z(s(D))$
(respectively, $Z(D)$) by an element of $\{x_+, x_-\}$. The labeled resolution configuration $i=(D,y)$ is the {\em  initial configuration} of the decorated resolution configuration, and the labeled resolution configuration $f=(s(D),x)$ is the {\em  final configuration}.

Associated to a decorated resolution configuration $(D, x, y)$ is the poset $P(D, x, y)$
 consisting of all labeled resolution configurations $(E, z)$ with $(D, y)\prec(E, z)\prec(s(D), x)$.
We call $P(D, x, y)$ the poset for $(D, x, y)$.

For any resolution configuration $D'=s_A(D)$, $A\subset A(D)$, we define its {\em multiplicity} to be the number of labelings on $D'$ 
which belong to $P(D,x,y)$:  
\label{pageqqqq}
$$
\mu_{(D, x, y)}(D')=\sharp\{z\,|\, (D, y)\prec(D', z)\prec(s(D), x)\}.
$$
The {\em multiplicity} $\mu(D, x, y)$ of a decorated resolution configuration $(D, x, y)$ is the maximum of the multiplicities $\mu_{(D, x, y)}(D')$:  
$$
\mu(D, x, y)=\max_{A\subset A(D)}\mu_{(D, x, y)}(s_A(D)).
$$
\end{definition}

\begin{definition}\label{def:core_configuration}
The {\it core} $c(D)$ of a resolution configuration $D$ is the resolution configuration
obtained from $D$ by deleting all the circles in $Z(D)$ that are disjoint from all the arcs in $A(D)$.
A resolution configuration $D$ is called {\em basic} if $D = c(D)$, that is, if every circle in $Z(D)$ intersects an arc in $A(D)$.

In the same way one can define the core $c(D, x)$ of a labeled resolution configuration $(D,x)$, and basic labeled resolution configurations.

The core of a decorated resolution configuration $(D,x,y)$ is the decorated configuration
$$c(D,x,y)=(c(D),x\mid_{s(c(D))},y\mid_{c(D)}).$$
A decorated resolution configuration is basic if it coincides with its core.
\end{definition}

\begin{remark}
Given two comparable labeled resolution configurations $\alpha=(D,y)\prec (D',x)=\beta$, $D'=s_A(D)$, $A\subset A(D)$, one can assign a basic decorated resolution configuration ${\mathcal D}(\alpha,\beta)$ to it. Consider two resolution configurations  $\bar D=(Z(D), A)$, $\bar D'=s(\bar D)=(Z(D'),\emptyset)$. Then the decorated configuration ${\mathcal D}(\alpha,\beta)$ is defined as the core of $(\bar D, x, y)$.
\\

If $(D,y)\not\prec (D',x)$ we say that the corresponding decorated resolution configuration is {\it empty}.
\end{remark}

\begin{remark}
(Basic) decorated resolution configurations  form a partially ordered set by inclusion relation: $(D',x',y')\subset (D,x,y)$ if
$(D',y')$ and $(s(D'),x')$ belong the poset $P(D,x,y)$.
\end{remark}

\subsection{Khovanov homotopy type}

Let us remind the construction of Khovanov homotopy type 
for links in $\R^3$, 
which is defined by using diagrams on $\R^2$,  
by R. Lipshitz and S. Sarkar~\cite{LSk}.

\begin{definition}\label{def:manifold_with_corners}
A {\em $k$-dimensional manifold with corners} is a topological space $X$ which is locally homeomorphic to an open subset of $\R^k_+=(\R_+)^k$ where $\R_+=[0,\infty)$.

For $x\in X$, let $c(x)$ be the number of zero coordinates of the corresponding point in $\R^k_+$. The set $\{x\in X\,|\, c(x)=i\}$ is the {\em codimension-$i$ boundary} of $X$.

A {\em connected facet} of $X$ is the closure of a connected component of the codimension-$1$ boundary of $X$. A {\em facet} is a union of disjoint connected facets.
\end{definition}

\begin{definition}\label{def:n_manifold}
A manifold with corners $X$ is called a {\em manifold with facets} if every point $x\in X$ belongs to exactly $c(x)$ connected facets. An $\langle n\rangle$-manifold is a manifold with facets $X$ along with an ordered $n$-tuple $(\partial_1X,\dots,\partial_n X)$ of facets of $X$ such that
\begin{itemize}
\item $\bigcup_{i=1}^n \partial_i X=\partial X$;
\item for all distinct $i,j$ the intersection $\partial_i X\cap \partial_j X$ is a facet of both $\partial_i X$ and $\partial_j X$.
\end{itemize}
\end{definition}
For any $A\subset \{1,\dots,n\}$ denote $X(A)=\bigcap_{i\in A}\partial_i X$.

\begin{definition}\label{def:neat_embedding}
Given a $(n+1)$-tuple ${\mathbf d}=(d_0,\dots,d_n)\in\N^{n+1}$, let
$$
\E^{\mathbf d}_n=\R^{d_0}\times\R_+\times\R^{d_1}\times\R_+\times\dots\times\R_+\times\R^{d_n}.
$$
$\E^{\mathbf d}_n$ is a $\langle n\rangle$-manifold with
$$
\partial_i(\E^{\mathbf d}_n)=\R^{d_0}\times\dots\times\R^{d_{i-1}}\times\{0\}\times\dots\times\R^{d_n}.
$$

A {\em neat immersion} of an $\langle n\rangle$-manifold is a smooth immersion
$\iota\colon X\looparrowright\E_n^{\mathbf d}$ for some $\mathbf d$ such that:
\begin{enumerate}
\item $\iota^{-1}(\partial_i(\E^{\mathbf d}_n))=\partial_i X$ for all $i$,\\
\item for any $A\subset B\subset\{1,\dots,n\}$ the sets $\iota(X(A))$ and $\E^{\mathbf d}_n(B)$ are transversal.
\end{enumerate}

A {\em neat embeding} is a neat immersion that is also an embedding.
\end{definition}

\begin{definition}\label{def:flow_category}
A {\em flow category} is a pair $(\mathscr C, \gr)$ where $\mathscr C$ is a category with finitely many objects $Ob(\mathscr C)$ and $\gr\colon Ob(\mathscr C)\to\Z$ is a function, satisfying the following conditions:

\begin{enumerate}
\item $\Hom(x,x)={id}$ for all $x\in Ob(\mathscr C)$, and for distinct $x,y\in Ob(\mathscr C)$, $\Hom(x,y)$ is a compact $(\gr(x)-\gr(y)-1)$-dimensional $\langle \gr(x)-\gr(y)-1\rangle$-manifold;
\item for distinct $x,y,z\in Ob(\mathscr C)$ with $\gr(z)-\gr(y)=m$ the composition map
$$
\circ\colon \Hom(z,y)\times\Hom(x,z)\to\Hom(x,y)
$$
    is an embedding into $\partial_m\Hom(x,y)$. Furthermore,
$$
\circ^{-1}(\partial_i\Hom(x,y))=\left\{\begin{array}{cl}
\partial_i\Hom(z,y)\times\Hom(x,z) & \mbox{for } i<m,\\
\Hom(z,y)\times\partial_{i-m}\Hom(x,z) & \mbox{for } i>m.\end{array}\right.
$$
\item for distinct $x,y\in Ob(\mathscr C)$ the composition induces a diffeomorphism
$$
 \partial_i\Hom(x,y)\cong\bigsqcup_{z\in Ob(\mathscr C)\colon \gr(z)=\gr(y)+i}\Hom(z,y)\times\Hom(x,z).
$$
\end{enumerate}
\end{definition}

For any objects  $x,y$ in a flow category define the {\em moduli space} from $x$ to $y$ to be
$$
\MM(x,y)=\left\{\begin{array}{cl}
\emptyset & \mbox{if } x=y,\\
\Hom(x,y) & \mbox{otherwise}.\end{array}\right.
$$

Let ${\mathbf d}=(\dots, d_{-1},d_0,d_1,\dots)$ be a sequence of natural numbers. For any $a<b$ denote $\E_{\mathbf d}[a:b]=\E_{b-a-1}^{d_a,\dots,d_{b-1}}$.

\begin{definition}
A neat immersion (embedding) of a flow category $\mathscr C$ is a collection of neat immersions (embeddings) $\iota_{x,y}\colon \MM(x,y)\looparrowright
\E_{\mathbf d}[\gr(y):\gr(x)]$ such that
\begin{enumerate}
\item for all $i,j$ the map
$$\iota_{i,j}=\sqcup_{x,y}\iota_{x,y}\colon
\bigsqcup_{x,y\in Ob(\mathscr C)\colon \gr(x)=i,\gr(y)=j}\MM(x,y)\to \E_{\mathbf d}[j:i]
$$
is a neat immersion (embedding);
\item for all objects $x,y,z$ and all points $p\in\MM(x,z)$, $q\in\MM(z,y)$
$$
\iota_{x,y}(q\circ p)=(\iota_{z,y}(q),0,\iota_{x,z}(p)).
$$
\end{enumerate}
\end{definition}

\begin{definition}\label{def:framed_flow_category}
Let $\iota$ be a neat immersion of a flow category $\mathscr C$. For objects $x,y$, let $\nu_{x,y}$ be the normal bundle on the moduli space $\MM(x,y)$, induced by the immersion $\iota_{x,y}$. A {\em coherent framing} $\phi$ of the normal bundle is a framing for $\nu_{x,y}$ for all objects $x,y$ such that the product framing $\nu_{z,y}\times\nu_{x,z}$ equals to the pullback $\circ^*(\nu_{x,y})$ for all $x,y,z$.

A flow category with a fixed coherent framing of the normal bundle to some neat immersion is called a {\em framed flow category}.
\end{definition}

For a framed flow category there is an associated cochain complex $C^*(\mathscr C)$. The chain space of the complex is the free abelian group generated by the objects of the category: $C^*(\mathscr C)=\Z[Ob(\mathscr C)]$; the differential is given by the formula
$$
\delta y=\sum_{x\in Ob(\mathscr C)\colon \gr(x)=\gr(y)+1} \left(\sum_{f\in\MM(x,y)}\phi(f)\right)x.
$$
The moduli space in the formula is a compact zero-dimensional manifold, i.e. a finite set, and the framing $\phi$ is given by signs of the elements of that set.

To a framed flow category one can associate a based CW complex in the following way.

\begin{definition}\label{def:flow_category_realization}
Let $\mathscr C$ be a framed flow category with a neat embedding $\iota$ into $\E_{\mathbf d}$ and a framing $\phi$.
Let $B=\min_{x\in Ob(\mathscr C)} \gr(x)$ and $B=\max_{x\in Ob(\mathscr C)} \gr(x)$.. Using framing, extend the embedding $\iota_{x,y}$ for some small $\epsilon>0$ to an embedding
$$
\tilde\iota_{x,y}\colon \MM(x,y)\times [-\epsilon,\epsilon]^{d_{\gr(y)}+\cdots+d_{\gr(x)-1}}\to \E_{\mathbf d}[\gr(y):\gr(x)]
$$
where $\E_{\mathbf d}[\gr(y):\gr(x)]=
\R^{d_{\gr(y)}}\times\R_+\times\dots\times\R_+\times\R^{d_{\gr(x)-1}}.$

Choose $R$ sufficiently large so that for all $x,y$
$$\tilde\iota_{x,y}\colon \MM(x,y)\times [-\epsilon,\epsilon]^{d_{\gr(y)}+\cdots+d_{\gr(x)-1}}$$
lies in $[-R,R]^{d_{\gr(y)}}\times[0,R]\times\dots\times[0,R]\times[-R,R]^{d_{\gr(x)-1}}$.

To any object $x$ assign the cell
\begin{multline*}
C(x)=[0,R]\times[-R,R]^{d_{B}}\times\dots\times[0,R]\times[-R,R]^{d_{\gr(x)-1}}\times\{0\}\times\\
[-\epsilon,\epsilon]^{d_{\gr(x)}}\times\dots\times\{0\}\times[-R,R]^{d_{A-1}}.
\end{multline*}
For any other object $y$ such that $\gr(y)<\gr(x)$ one identifies  $C(y)\times\MM(x,y)$ with the subset
\begin{multline*}
C_y(x)=[0,R]\times[-R,R]^{d_{B}}\times\dots\times[0,R]\times[-R,R]^{d_{\gr(y)-1}}\times\{0\}\times\\
\iota_{x,y}(\MM(x,y))\times\{0\}\times[-\epsilon,\epsilon]^{d_{\gr(x)}}\times\dots\times\{0\}\times[-\epsilon,\epsilon]^{d_A-1}.
\end{multline*}

Define the attaching map for $C(x)$ as the map which is the projection $C_y(x)\cong C(y)\times\MM(x,y)$ to $C(y)$ on $C_y(x)$ and the map to the basepoint on the  $\partial C(x)\setminus \bigcup_y C_y(x)$.

These gluing maps define a CW complex $|\mathscr C|$ which is the  {\em Cohen--Jones--Segal realization} of the framed flow category $\mathscr C$.
\end{definition}

\begin{theorem}[\cite{LSk}]
The CW complex $|\mathscr C|$ is well defined and its cellular cochain complex is isomorphic to the associated cochain complex $C^*(\mathscr C)$.
\end{theorem}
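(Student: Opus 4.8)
\emph{Proof sketch.} The plan is to construct $|\mathscr C|$ by induction on the grading, checking at each stage that the prescribed attaching maps are well defined and continuous with image in the previous skeleton, and then to identify the resulting cellular cochain complex with $C^*(\mathscr C)$ by a local degree computation. First I would order the finitely many values $B=\gr_0<\gr_1<\dots<\gr_k=A$ of $\gr$ on $Ob(\mathscr C)$ and set $X_j$ to be the union of the basepoint with the cells $C(x)$, $\gr(x)\le\gr_j$. Counting the $[0,R]$, $[-R,R]$ and $[-\epsilon,\epsilon]$ factors in Definition~\ref{def:flow_category_realization} shows $\dim C(x)=\gr(x)-B+\sum_{i=B}^{A-1}d_i$, so $\dim C(x)$ depends only on $\gr(x)$ and increases by $1$ when $\gr(x)$ does; moreover $\dim C_y(x)=\dim C(y)+\dim\MM(x,y)=\dim C(x)-1$, so $C_y(x)$ is a codimension-one piece of $\partial C(x)$. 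Thus $X_j$ is obtained from $X_{j-1}$ by attaching the equidimensional cells $C(x)$ with $\gr(x)=\gr_j$, and it suffices to check the attaching maps.

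The image of the attaching map of $C(x)$ lies in $X_{j-1}$ because $C_y(x)\cong C(y)\times\MM(x,y)$ with $\gr(y)<\gr(x)$ and $\dim C(y)\le\dim C(x)-1$. For continuity the only points to examine are the strata where two of the $C_y(x)$ overlap, or where a $C_y(x)$ abuts the part of $\partial C(x)$ sent to the basepoint. By the flow-category axioms $\partial\MM(x,y)=\bigsqcup_{z}\MM(z,y)\times\MM(x,z)$, so the overlap $C_y(x)\cap C_z(x)$ for $\gr(y)<\gr(z)<\gr(x)$ corresponds to $C(z)\times\MM(z,y)\times\MM(x,z)\subset C(y)\times\MM(x,y)$; on it the prescription ``project onto $C(y)$ through $\MM(x,y)$'' agrees with ``project onto $C(z)$, then apply the inductively defined attaching map of $C(z)$, which sends $C_y(z)=C(y)\times\MM(z,y)$ onto $C(y)$'' exactly because the neat embedding of the flow category satisfies $\iota_{x,y}(q\circ p)=(\iota_{z,y}(q),0,\iota_{x,z}(p))$. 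Similarly $C_y(x)$ meets the basepoint region along $\bigl(\partial C(y)\setminus\bigcup_z C_z(y)\bigr)\times\MM(x,y)$, which the attaching map of $C(y)$ already collapses to the basepoint. Since the moduli spaces are compact manifolds with facets whose collar neighbourhoods are supplied by the neat embedding, the piecewise map is continuous, and by induction $|\mathscr C|=X_k$ is a genuine CW complex with one cell $C(x)$ per object besides the basepoint $0$-cell.

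Finally, under the bijection $C(x)\leftrightarrow x$, which matches the cell dimension $\dim C(x)$ with $\gr(x)$ up to the fixed shift, one identifies the cellular incidence numbers with the coefficients of $\delta$. For $\gr(x)=\gr(y)+1$ the number $[C(x):C(y)]$ is the degree of $\partial C(x)\to X_{j-1}\to X_{j-1}/X_{j-2}\to C(y)/\partial C(y)\cong S^{\dim C(y)}$; its only non-basepoint preimage is $C_y(x)\cong C(y)\times\MM(x,y)$ with $\MM(x,y)$ a finite set, and on each sheet $C(y)\times\{f\}$ the composite is the quotient $C(y)\to C(y)/\partial C(y)$, a homeomorphism that is orientation-preserving or -reversing according to the sign $\phi(f)$ that the coherent framing assigns to $f$. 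Summing, $[C(x):C(y)]=\sum_{f\in\MM(x,y)}\phi(f)$, which is precisely the coefficient of $x$ in $\delta y$; hence the cellular cochain complex of $|\mathscr C|$ is isomorphic to $C^*(\mathscr C)$.

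The main obstacle will be the continuity and mutual-compatibility check of the second paragraph: one must reconcile, simultaneously along all overlapping boundary strata, the inductively built attaching maps, and this is where the $\langle n\rangle$-manifold structure of the moduli spaces, their collars under the neat embedding, and the coherence of the framing $\phi$ all enter. Once the CW structure is established the degree computation is essentially formal. For fixed auxiliary data $R,\epsilon,\iota,\phi$ this yields a well-defined complex; independence of those choices up to homotopy equivalence would be argued separately and is not part of the present statement.
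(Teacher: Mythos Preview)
The paper does not give its own proof of this statement: it is quoted from \cite{LSk} as a black box and no argument is supplied. So there is nothing in the present paper to compare your sketch against. Your outline is essentially the argument in \cite{LSk}: build the complex by induction on $\gr$, use the coherence condition $\iota_{x,y}(q\circ p)=(\iota_{z,y}(q),0,\iota_{x,z}(p))$ on the neat embedding to check that the attaching maps agree on overlapping boundary strata $C_y(x)\cap C_z(x)$, and then read off the cellular incidence numbers as signed counts of points in the $0$-dimensional moduli spaces via the framing. The dimension bookkeeping and the identification of $[C(x):C(y)]$ with $\sum_{f\in\MM(x,y)}\phi(f)$ are correct.

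One small caution: in your final paragraph you describe the map on each sheet $C(y)\times\{f\}$ as ``the quotient $C(y)\to C(y)/\partial C(y)$, a homeomorphism that is orientation-preserving or -reversing according to $\phi(f)$.'' Strictly speaking the sign $\phi(f)$ does not appear as an orientation of that quotient map itself; rather, the thickened embedding $\tilde\iota_{x,y}$ identifies $\{f\}\times[-\epsilon,\epsilon]^{d_{\gr(y)}}$ with a small cube in the $[-R,R]^{d_{\gr(y)}}$ factor of $C(y)$, and it is this identification that is orientation-preserving or -reversing according to the framing at $f$. The net effect on the degree is exactly as you state, but the sign lives in the normal-bundle identification, not in the collapse map. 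This is a matter of phrasing rather than a gap.
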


\begin{example}[Cube flow category]
Let $X=[0,1]^n$ be the $n$-dimensional cube and $f_n(x_1,\dots,x_n)=f(x_1)+\cdots+f(x_n)$, where $f(x)=3x^2-2x^3$, be a Morse function on it.
Define the {\em $n$-dimensional cube flow category} $\mathscr C_C(n)$ as the Morse flow category of the function $f_n$. This means that the objects of $\mathscr C_C(n)$ are the critical points of $f_n$, i.e. the vertices $\{0,1\}^n$ of the cube $X$.

Denote the object $(0,\dots,0)$ by $\bar 0$, and the object $(1,\dots, 1)$ by $\bar 1$.

The grading function is defined as $\gr(u)=|u|=\sum_{i=1}^n u_i$, $u=(u_1,\dots,u_n)\in\{0,1\}^n$. The moduli space $\MM(x,y)$ consists of the lines of the gradient flow which starts at $x$ and ends at $y$. One can identify the moduli space $\MM(x,y)$ with the permutahedron of dimension $\gr(x)-\gr(y)-1$.

The cube flow category can be framed (by induction on the moduli spaces dimension).
\end{example}

\begin{example}[Khovanov flow category]
Let $\mathcal L$ in $S^3$ be a link and $L$ in $S^2$ be its diagram.

The {\it Khovanov flow category} $\mathscr C_K(L)$ has one object
for each 
Khovanov basis element. 
That is, an object of $\mathscr C_K(L)$ is a labeled resolution configuration of
the form $\mathbf{x}=(D_L(u), x)$  with $u\in\{0, 1\}^n$.
The grading on the objects is
the homological grading gr$_h$; 
the quantum grading gr$_q$ is an additional grading on the objects.
We need the orientation of $L$ in order to define these
gradings, but the rest of the construction of $\mathscr C_K(L)$ is independent of the orientation.
Consider objects $\mathbf{x}=(D_L(u), x)$ and
$\mathbf{y}=(D_L(v), y)$ of $\mathscr C_K(L)$.
The space $\mathcal M_{\mathscr C_K(L)}(\mathbf{x},\mathbf{y})$
is defined to be empty unless $y\prec x$
with respect to the partial order from Definition \ref{2.10}.
So,
assume that $y\prec x$.
Let $x|$ denote the restriction of $x$ to
$s(D_L(v)-D_L(u))=D_L(u)-D_L(v)$
and
let $y|$ denote the restriction of $y$ to $D_L(v)-D_L(u)$.
Therefore, $(D_L(v)-D_L(u), x|, y|)$  is
a basic decorated resolution configuration.

In \cite[\S5 and \S6]{LSk} Lipshitz and Sarkar associate to each index $n$ basic decorated resolution configuration
$(D, x, y)$ an $(n-1)$-dimensional $\left<n-1\right>$-manifold $\mathcal M(D, x, y)$
together with the $\mu(D,x,y)$-fold trivial covering
$$\mathcal F :\mathcal  M(D, x, y) \to\mathcal M_{\mathscr C(n)}(\overline1, \overline0).$$

Use it, and define
$$\MM_{\mathscr C_K(L)}(\mathbf{x},\mathbf{y})=\MM(D_L(v)-D_L(u),x|, y|).$$

The framing of the cube flow category can be lifted to the Khovanov category.
\end{example}

\begin{definition}
The Cohen--Jones--Segal realization ${\mathcal X}(L)=|\mathscr C_K(L)|$ of the Khovanov framed flow category
is called the {\em Khovanov-Lipshitz-Sarkar stable homotopy type}
of the link diagram $L$ on $\R^2$.
\end{definition}

\begin{theorem}[\cite{LSk}]\label{thm:stable_khovanov_homology}
Let $L$ be a link diagram $L$ on $\R^2$ for a link $\mathcal L$ in $\R^3$.   
The Khovanov-Lipshitz-Sarkar stable homotopy type ${\mathcal X}(L)$ defined for 
the link diagram $L$ on $\R^2$  
is a link invariant of the link $\mathcal L$ in $\R^3$.
\end{theorem}

\section{Moduli systems}

Let $F$ be a closed oriented surface. We consider links in the thickening of the surface $F$.
In order to define Khovanov homotopy type of such links we need fix a set of moduli spaces for the decorated resolution configurations of the link. This observation leads to the following definition, cf.~\cite[Section 5.1]{LSk}.

\begin{definition}\label{def:moduli_system}
A {\em moduli system} on the surface $F$ is a family of correspondences between the basic decorated resolution configurations of index $n$ in the surface $F$ and $(n-1)$-dimensional $\langle n-1\rangle$-manifolds:
$$\MM\colon (D,x,y) \mapsto \MM(D, x, y)$$

together with $\langle n-1\rangle$-maps
$$
\mathcal F_{(D, x, y)}\colon\MM(D, x, y)\to\MM_{\CC_C(n)}(\bar 1,\bar 0)
$$

These correspondences must obey the following conditions:

 \begin{enumerate}
 \item the moduli space $\MM(D, x, y)$ of a basic decorated resolution configuration $(D, x, y)$ depends only on the isotopy class of the configuration;
 \item $\MM(D, x, y)=\emptyset$ if $P(D,x,y)=\emptyset$; 
 \item for any $(E,z)\in P(D,x,y)$ there are embeddings
 $$ \circ\colon \MM(D\setminus E, z|, y|)\times \MM(E\setminus s(D), x|, z|)\to \MM(D, x, y);$$

 \item the faces of $\MM(D, x, y)$ are determined by
 $$
 \partial_i\MM(D, x, y)=
 \coprod_{(E,z)\in P(D,x,y),\ ind(D\setminus E)=i} \circ(\MM(D\setminus E, z|, y|)\times \MM(E\setminus s(D), x|, z|));
 $$

 \item the composition is compatible with the maps $\mathcal F$: for any $E=D_D(v)$
  \begin{equation*}\label{eq:moduli_space_equivarity}
  \xymatrix{
    \Moduli(D\sm E,\gen{z}|,\gen{y}|)\times
  \Moduli(E\sm s(D),\gen{x}|,\gen{z}|)
  \ar[r]^-\circ\ar[d]_{{\mathcal F}\times{\mathcal F}} &
  \Moduli(D,\gen{x},\gen{y})\ar[dd]^{{\mathcal F}}\\
  \Moduli_{\CubeFlowCat(n-m)}(\vect{1},\vect{0})\times
  \Moduli_{\CubeFlowCat(m)}(\vect{1},\vect{0})
  \ar[d]_{}&
  \\
  \Moduli_{\CubeFlowCat(n)}({v},\vect{0}) \times
  \Moduli_{\CubeFlowCat(n)}(\vect{1},{v})\ar[r]^-\circ &
  \Moduli_{\CubeFlowCat(n)}(\vect{1},\vect{0}).
  }
   \end{equation*}

 \item the map $\mathcal F_{(D, x, y)}$ is the trivial covering.
 \end{enumerate}
\end{definition}
\vs

In 
\S\ref{sect:decorated} and \S\ref{mod}, 
we will present a moduli system for our case.  
That is, we will prove the following theorem.

\begin{theorem}
\label{thm:moduli_system_existence}
For any closed oriented surface $F$ there exists a moduli system.
\end{theorem}

By Theorem 
\ref{thm:moduli_system_existence}, we have 
Proposition \ref{prbigsite} and 
Theorem \ref{thm:moduli_system_invariance} below.
Given a moduli system $\MM=\{\MM(D, x, y)\}$, let $\mathscr C_\MM(L)$ be the Khovanov flow category whose objects are labeled resolution configurations and moduli spaces are
$$\MM((D_L(u),x),(D_L(v),y))=\MM(D_L(v)-D_L(u),x|, y|).$$

\begin{proposition}\label{prbigsite}
There is a structure of framed flow category on $\mathscr C_\MM(L)$.
\end{proposition}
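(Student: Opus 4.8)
The plan is to follow the template of Lipshitz and Sarkar~\cite{LSk}, feeding in the abstract data of the moduli system $\MM$ in place of their specific moduli spaces, and to obtain the framing by lifting the one on the cube flow category.

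First I would check that $\mathscr C_\MM(L)$ is a flow category in the sense of Definition~\ref{def:flow_category}. Its objects are the labeled resolution configurations $(D_L(u),x)$, graded by the homological grading $\gr_h$. For objects $\mathbf x=(D_L(u),x)$ and $\mathbf y=(D_L(v),y)$ with $\mathbf y\prec\mathbf x$, the morphism space $\MM_{\mathscr C_\MM(L)}(\mathbf x,\mathbf y)=\MM(D_L(v)-D_L(u),x|,y|)$ is $\MM$ evaluated on the basic decorated resolution configuration ${\mathcal D}(\mathbf y,\mathbf x)$ of the remark following Definition~\ref{def:core_configuration}, which has index $\ind=|u|-|v|=\gr_h(\mathbf x)-\gr_h(\mathbf y)$; hence it is a compact $(\gr_h(\mathbf x)-\gr_h(\mathbf y)-1)$-dimensional $\langle\gr_h(\mathbf x)-\gr_h(\mathbf y)-1\rangle$-manifold, which gives condition~(1). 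Conditions~(2) and~(3) are a direct translation of axioms~(3) and~(4) of Definition~\ref{def:moduli_system}: an element $(E,z)\in P(D,x,y)$ is the same datum as an intermediate object $\mathbf z$ with $\mathbf y\prec\mathbf z\prec\mathbf x$; the two factors $\MM(D\setminus E,z|,y|)$ and $\MM(E\setminus s(D),x|,z|)$ are $\MM_{\mathscr C_\MM(L)}(\mathbf z,\mathbf y)$ and $\MM_{\mathscr C_\MM(L)}(\mathbf x,\mathbf z)$; $\ind(D\setminus E)=\gr_h(\mathbf x)-\gr_h(\mathbf z)$; the embedding $\circ$ of axiom~(3) is the required composition map into $\partial_{\gr_h(\mathbf x)-\gr_h(\mathbf z)}$, and the face decomposition of axiom~(4) supplies both the preimage-of-boundary formula and the boundary diffeomorphism. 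The one combinatorial point to verify is that ``difference'' configurations are multiplicative, i.e. that ${\mathcal D}(\mathbf y,\mathbf z)$ and ${\mathcal D}(\mathbf z,\mathbf x)$ glue along the surgered arc to ${\mathcal D}(\mathbf y,\mathbf x)$; this is the same bookkeeping carried out in~\cite{LSk}.

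Next I would fix a neat immersion --- in fact an embedding --- of $\mathscr C_\MM(L)$. One option is to invoke the general existence of neat embeddings of flow categories, obtained by induction on $\gr(\mathbf x)-\gr(\mathbf y)$: embed the top strata, then extend over the faces using the composition maps and transversality in enough auxiliary $\R$-coordinates. A more structured option, convenient for the framing, is to build the embedding from a neat embedding $\iota_C$ of the cube flow category by combining a lift of $\iota_C\circ\mathcal F_{(D,x,y)}$ with auxiliary coordinates separating the sheets of the branched covering; coherence (condition~(2) of a neat embedding of a flow category) is then arranged inductively using the compatibility square of axiom~(5) of Definition~\ref{def:moduli_system}.

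The heart of the proof, and the step I expect to be the main obstacle, is the coherent framing. The cube flow category carries a coherent framing $\phi_C$, and the aim is to lift it along the branched coverings $\mathcal F_{(D,x,y)}\colon\MM(D,x,y)\to\MM_{\mathscr C_C(n)}(\bar 1,\bar 0)$ so that, modulo the canonically trivialised auxiliary factors, the framing of the normal bundle $\nu_{\mathbf x,\mathbf y}$ is the pullback $\mathcal F^*\phi_C$. Away from the branch set --- of codimension~$2$ in the permutahedron --- this is a genuine pullback of a trivialisation and is unproblematic; the delicate issue is extending it across the branch locus, which is precisely the situation treated in~\cite[\S5--\S6]{LSk}. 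Over the relevant codimension-two cells --- inside the hexagon $\MM_{\mathscr C_C(3)}(\bar 1,\bar 0)$ at the ladybug configuration --- the extension hinges on a coherent choice, which is exactly the fixed ``choice of the right and the left pair'' built into the moduli-system data: in the cubical (type~C) case every covering over a cube moduli space is trivial and the extension is immediate, while in the dodecagonal (type~D) case the covering in question is the branched double cover of the hexagon by the dodecagon, and one checks that $\phi_C$ lifts consistently over its two sheets. Finally, the coherence identity required in Definition~\ref{def:framed_flow_category} --- that the product framing on $\nu_{\mathbf z,\mathbf y}\times\nu_{\mathbf x,\mathbf z}$ equals $\circ^*(\nu_{\mathbf x,\mathbf y})$ --- follows from the commutativity of the diagram in axiom~(5) of Definition~\ref{def:moduli_system} together with the coherence of $\phi_C$. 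Assembling these steps endows $\mathscr C_\MM(L)$ with the structure of a framed flow category.
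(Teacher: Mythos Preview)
Your overall strategy is right and matches the paper's: verify the flow-category axioms from the moduli-system axioms, neatly embed, then obtain the framing by lifting the cube framing $\phi_C$ through $\mathcal F$. (One index slip: the composition lands in $\partial_m$ with $m=\gr_h(\mathbf z)-\gr_h(\mathbf y)=\ind(D\setminus E)$, not $\gr_h(\mathbf x)-\gr_h(\mathbf z)$.) The gap is in the framing step at branch points. You claim that extending $\mathcal F^*\phi_C$ across the branch locus ``is precisely the situation treated in~\cite[\S5--\S6]{LSk}'' and ``hinges on the fixed choice of the right and the left pair''; neither is correct. Those sections of~\cite{LSk} build the moduli spaces themselves, and every cover there is \emph{trivial}, so no branch-point analysis ever occurs. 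The right/left pair is a choice of \emph{moduli space} (which two segments to take in the ladybug), not a framing datum. At a genuine branch point $\mathcal F$ fails to be an immersion, so $\mathcal F^*\phi_C$ does not automatically frame the normal bundle of whatever neat embedding you picked; ``one checks that $\phi_C$ lifts consistently over its two sheets'' is exactly the missing argument.

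The paper circumvents this with a transfer argument taken wholesale from~\cite[\S3]{LSk}. The composite $\iota_0=\iota_C\circ\mathcal F$ is only a neat \emph{map}, not an immersion, but its image equals $\iota_C(\mathscr C_C)$ and therefore carries the framed normal bundle $\phi_C$ (this is the content of Fig.~\ref{fig:normal_frame_lift}). Lemma~3.16 of~\cite{LSk} independently supplies a genuine neat embedding $\iota_1$ of $\mathscr C_\MM(L)$; Lemma~3.17 gives, after stabilisation, an explicit one-parameter family $\tilde\iota_t$ of neat maps with $\tilde\iota_0=\iota_0$ and $\tilde\iota_t$ a neat embedding for all $t>0$; Lemma~3.19 then transports the coherent framing from $t=0$ to $t=1$. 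No hexagon/dodecagon casework is needed, and the argument is uniform in the index.
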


\begin{proof}
  Let $\iota_C$ be a neat embedding of the cube flow category $\CC_C$ with a coherent framing $\phi_C$. Then $\iota_0=\iota_C\circ\mathcal F$ is a neat map into some $\E_{\mathbf d}[a:b]$.


By Lemma 3.16 of~\cite{LSk} there exists a neat embedding $\iota_1$ of the flow category $\mathscr C_\MM(L)$ into $\E_{\mathbf d'}[a:b]$.
  By Lemma 3.17 of~\cite{LSk} there exists a family of neat maps $\tilde\iota_t$ connecting the neat maps  $\iota_0[\mathbf{d}+\mathbf{d'}+1]$ and $\iota_1[\mathbf{d}+\mathbf{d'}+1]$ such that for any $t>0$ the map $\tilde\iota_t$ is a neat embedding. This family of maps admits an explicit formula
{\normalsize{
\begin{gather*}
 \tilde\iota_t\colon \mathscr C_\MM(L) \to \E_{\mathbf{d}+\mathbf{d'}+1}[a:b]=\R^{d_a+d'_a+1}\times\R_+\times\R^{d_{a+1}+d'_{a+1}+1}\times\R_+\times\dots\times\R_+\times\R^{d_{b-1}+d'_{b-1}+1},\\
 \tilde\iota_t=((1-t)\iota_0^{a}+t\iota_1^a,f(t)\iota_1^a,f(t)\bar\iota_1^{a+1},(1-t)\bar\iota_0^{a+1}+t\bar\iota_1^{a+1}, (1-t)\iota_0^{a+1}+t\iota_1^{a+1},\dots,
 (1-t)\iota_0^{b-1}+t\iota_1^{b-1},f(t)\iota_1^{b-1},0)
\end{gather*}
where $f(t)=e^{-\frac 1{t(1-t)}}$ and
$$
\iota_0=(\iota_0^a,\bar\iota_0^{a+1},\iota_0^{a+1},\dots,\bar\iota_0^{b-1},\iota_0^{b-1})\colon  \mathscr C_\MM(L) \to \R^{d_a}\times\R_+\times\R^{d_{a+1}}\times\dots\times\R_+\times\R^{d_{b-1}}.
$$}}

By Lemma~3.19 of~\cite{LSk} we can extend the coherent framing $\phi$ for the map $\tilde\iota_0=\iota_0[\mathbf{d'}+\mathbf{d''}]$ to a family of coherent framings $\phi_t$ for the maps $\tilde\iota_t$. In particular, the neat embedding $\tilde\iota_1=\iota_1[\mathbf{d}+\mathbf{d''}]$ admits the coherent framing $\phi_1$.
\end{proof}\bb

Define the Khovanov homotopy type ${\mathcal X}_\MM (L)$ associated with the moduli system $\MM$ as the realization of the framed flow category $\mathscr C_\MM(L)$.

\begin{theorem}\label{thm:moduli_system_invariance}
 Khovanov homotopy type ${\mathcal X}_\MM (L)$ is a link invariant.
\end{theorem}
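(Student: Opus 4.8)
The plan is to follow the classical strategy of Lipshitz--Sarkar for proving invariance of the Khovanov stable homotopy type (see \cite[\S6]{LSk}), adapted to an arbitrary moduli system $\MM$ on the thickened surface $F$. The homotopy type $\mathcal X_\MM(L)$ depends a priori on the oriented link diagram $L$ together with an ordering of its crossings; I must show it is unchanged (up to stable homotopy equivalence) under (i) reordering the crossings, (ii) overall shifts coming from orientation/Reidemeister-I moves, and (iii) Reidemeister moves II and III in the surface $F$. For (i), reordering the crossings only permutes the coordinate directions of the ambient Euclidean space $\E_{\mathbf d}$ used in the Cohen--Jones--Segal realization; since the moduli spaces $\MM(D,x,y)$ depend only on the isotopy class of the basic decorated configuration (condition (1) of Definition \ref{def:moduli_system}) and not on the global ordering, the resulting framed flow categories are isomorphic after a permutation of coordinates, which induces a homeomorphism of realizations. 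For (ii), the grading shifts from stabilization/Reidemeister-I are handled exactly as in \cite{LSk}: an added positive (resp. negative) crossing contributes a collection of configurations all of whose moduli spaces are cube moduli with the trivial covering (the new circle is contractible and the local picture is identical to the classical case), so the framed flow category changes by the cube-category operations that Lipshitz--Sarkar show induce a suspension or a homotopy equivalence.

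The substantive steps are the Reidemeister-II and Reidemeister-III invariances. Here I would mimic \cite[\S6.2--6.3]{LSk}: for a Reidemeister-II move one exhibits, on the level of framed flow categories, an acyclic subcategory (a "stable destabilization") whose associated CW complex is contractible, so that quotienting by it yields a homotopy equivalence $\mathcal X_\MM(L) \simeq \mathcal X_\MM(L')$; for Reidemeister III one factors the move through Reidemeister-II moves and an isotopy, reducing to the cases already treated. The key point that makes this go through for a general moduli system is that all the decorated configurations appearing in the "extra" part of the complex created by an R2 or R3 move are \emph{classical} (their cores involve only contractible circles and look locally like the pictures in Fig.~\ref{resolC}), so the moduli system $\MM$ is forced to assign to them precisely the Lipshitz--Sarkar cube moduli with trivial covering by condition (6) together with cubicality of those pieces; the non-cubic behaviour of $\MM$ occurs only on configurations built from non-contractible circles, which are untouched by these local moves. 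Thus the gluing/cancellation arguments of \cite{LSk} apply verbatim to the relevant sub-diagrams.

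Finally, I would assemble these pieces: two diagrams of the same link $\mathcal L$ in $F\times[-1,1]$ are connected by a finite sequence of the moves above, and along each move we have produced a stable homotopy equivalence of realizations; composing them gives $\mathcal X_\MM(L)\simeq\mathcal X_\MM(L')$. The main obstacle I anticipate is bookkeeping the framings: one must check that the coherent framing $\phi$ transported through the branched covering $\mathcal F$ (as in the proof of the preceding Proposition, using Lemmas 3.16--3.19 of \cite{LSk}) is compatible with the cancellations used in the R2/R3 arguments — i.e.\ that the signs/framings on the acyclic subcategory are the standard ones. This is exactly where the "trivial covering over cube moduli" hypothesis is used, and verifying it carefully is the technical heart of the proof; everything else is a direct transcription of the Lipshitz--Sarkar invariance proof.
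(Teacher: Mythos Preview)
Your overall strategy---reducing to Lipshitz--Sarkar's Propositions 6.2, 6.3, 6.4 and checking that the acyclic subcategories and the surviving subcategory behave as in the classical case---is exactly what the paper does. However, your justification for \emph{why} those arguments transfer to a general moduli system $\MM$ contains a real error, and the step you flag as the ``technical heart'' is not where the actual content lies.

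You assert that the decorated configurations in the ``extra'' (acyclic) parts of the complex created by an R2 or R3 move have cores involving only contractible circles, and that condition (6) of Definition~\ref{def:moduli_system} then forces $\MM$ to assign them the trivial covering of the cube moduli. Both claims are false. First, the acyclic subcategories $\mathscr C_1,\mathscr C_3$ in the LS argument contain decorated configurations whose cores can involve non-contractible circles of the ambient diagram: the Reidemeister move is local, but a morphism in $\mathscr C_1$ or $\mathscr C_3$ may change crossings far from the move, and the resulting basic decorated configuration records those circles. Second, even for a genuinely planar configuration with $\mu=2$, condition (6) only demands a $2$-fold \emph{branched} covering, not a trivial one; the Case (D) construction in \S\ref{subsect:case_D} deliberately replaces the two hexagons by a cylinder over classical ladybug configurations, so a general $\MM$ is not forced to be cubic on the ``extra'' pieces.

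The correct reasons the LS argument goes through are different and simpler. Contractibility of the realizations $|\mathscr C_1|,|\mathscr C_3|$ follows from \cite[Lemma~3.32]{LSk}: a framed flow category whose associated chain complex is acyclic has contractible realization, by the Hurewicz/Whitehead theorem---this is insensitive to the shapes of the individual moduli spaces. The identification of the surviving subcategory $\mathscr C_2$ with $\mathscr C_\MM(L)$ uses only condition (1) of Definition~\ref{def:moduli_system}: objects of $\mathscr C_2$ have the new crossings in a fixed resolution, so the cores of the decorated configurations between them are isotopic in $F$ to the corresponding cores for $L$, and hence $\MM$ assigns the \emph{same} moduli spaces. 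No appeal to classicality, and no separate framing verification on the acyclic pieces, is needed.
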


\begin{proof}
We can use the reasoning of~\cite[Propositions 6.2, 6.3, 6.4]{LSk} without any changes.

Indeed, let the diagram $L'$ differ from $L$ by an increasing first Reidemeister move, Fig.~\ref{fig:reidemeister1}.

\begin{figure}[h]
\includegraphics[width=0.25\textwidth]{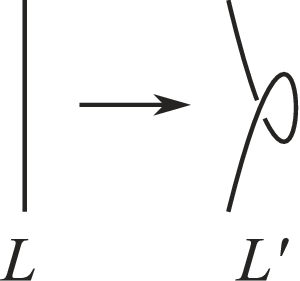}
\caption{{\bf First Reidemeister move
}\label{fig:reidemeister1}}
\end{figure}

The Khovanov complex of the diagram $L'$ contains a contractible subcomplex $C_1$, Fig.~\ref{fig:reidemeister1_complex} left. The quotient complex $C_2$ (Fig.~\ref{fig:reidemeister1_complex} right) can be identified with the Khovanov complex of $L$. On the level of flow category this means that the Khovanov flow category $\mathscr C_K(L)$ contains a closed subcategory $\mathscr C_1$ which corresponds to $C_1$ and a subcategory $\mathscr C_2$ that corresponds to $C_2$. The subcategory $\mathscr C_2$ is isomorphic to the category $\mathscr C_K(L)$. The geometric realization of $\mathscr C_1$ is contractible, hence, ${\mathcal X}_\MM (L')=|\mathscr C_K(L')|=|\mathscr C_1|\vee |\mathscr C_2|=|\mathscr C_2|=|\mathscr C_K(L)|={\mathcal X}_\MM (L)$.

\begin{figure}[h]
\includegraphics[height=0.13\textheight]{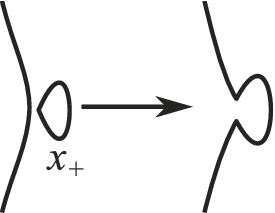}\hfil\qquad
\includegraphics[height=0.13\textheight]{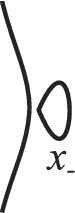}
\caption{{\bf Parts of Khovanov complex of the diagram $L'$
}\label{fig:reidemeister1_complex}}
\end{figure}

Analogously, one establishes the invariance under the second and third Reidemeister moves.
\end{proof}


\section{Decorated resolution configurations}\label{sect:decorated}

Let $F$ be a closed oriented surface. Let us describe decorated resolution configurations of link diagrams in the surface $F$ in more details.

Let $\mathcal D = (D,x,y)$ be a decorated resolution configuration in $F$. 
It is a trivalent graph on $F$ 
\label{pageon} 
consisting of cycles and arcs between the cycles.

Recall that $P(D,x,y)$ is the partially ordered set consisting of the labeled resolution configurations between $(D,y)$ and $(s(D),x)$.

Let $A=A(D)$ be the set of arcs. 
For a finite set, let $|A|$ be the number of elements of $A$. 
Assuming the arcs are ordered, there is a map $\pi\colon P(D,x,y)\to C_n$, where $n=|A|$ is the index of $D$ and $C_n\simeq\{0,1\}^n$ is the vertex set of $n$-dimensional cube. If $(s_{A'}(D),z)\in P(D,x,y)$ where $A'\subset A=\{a_i\}_{i=1,\dots,n}$ then one sets $\pi(s_{A'}(D),z)=\chi_{A}=(\epsilon_1,\dots,\epsilon_n)$ where $\epsilon_i=1$ if $a_i\in A'$ and $\epsilon_i=0$ if not.

Recall that the multiplicity 
of the decorated resolution configuration is the number 
$\mu(D,x,y)=\max_{v\in C_n} |\pi^{-1}(v)|$. \label{page||}
We abbreviate $\mu(D,x,y)$ to $\mu(D)$ when it is clear from the context. 
Let $\mathcal D_i = (D_i, x|_{s(D_i)}, y|_{D_i})$, $i=1,...,l$, \label{pageccd}
be the connected components of the decorated resolution configuration $(D,x,y)$.

\begin{proposition}
$\mu(\mathcal D)=\mu(\mathcal D_1)\times\cdots\times\mu(\mathcal D_l)$.
\end{proposition}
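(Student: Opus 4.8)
The plan is to reduce the computation of $\mu(\mathcal D)$ to a statement about the fibres of the cube-projection $\pi\colon P(D,x,y)\to C_n$ and then to exhibit a product decomposition of these fibres indexed by the connected components $\mathcal D_i$. First I would observe that if $a_i$ and $a_j$ lie in different connected components of the underlying trivalent graph of $\mathcal D$, then surgery along $a_i$ and surgery along $a_j$ affect disjoint sets of circles; consequently a labeled resolution configuration $(s_{A'}(D),z)$ lies in $P(D,x,y)$ if and only if its restriction to each component lies in the corresponding poset $P(\mathcal D_k)$. More precisely, writing $A=A(D_1)\sqcup\cdots\sqcup A(D_l)$ and $A'=A'_1\sqcup\cdots\sqcup A'_l$ with $A'_k\subset A(D_k)$, and writing $z$ for a labeling of $Z(s_{A'}(D))=\bigsqcup_k Z(s_{A'_k}(D_k))$, the conditions (1)--(3) of Definition~\ref{2.10} defining $(D,y)\prec(s_{A'}(D),z)\prec(s(D),x)$ are checked arc-by-arc and circle-by-circle, hence split as a conjunction over $k=1,\dots,l$. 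This gives a bijection
$$
\pi^{-1}(v) \;\cong\; \prod_{k=1}^{l} \pi_k^{-1}(v_k),
$$
where $v=(v_1,\dots,v_l)$ under the identification $C_n=C_{n_1}\times\cdots\times C_{n_l}$ ($n_k=\ind(D_k)$) and $\pi_k\colon P(\mathcal D_k)\to C_{n_k}$ is the cube-projection of the $k$-th component.

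Next I would take cardinalities and maximise. From the displayed bijection, $|\pi^{-1}(v)| = \prod_k |\pi_k^{-1}(v_k)|$ for every $v$. Since the product over the independent indices $v_k$ is maximised termwise,
$$
\mu(\mathcal D)=\max_{v\in C_n}|\pi^{-1}(v)|
=\max_{(v_1,\dots,v_l)}\prod_{k=1}^l |\pi_k^{-1}(v_k)|
=\prod_{k=1}^l \max_{v_k\in C_{n_k}}|\pi_k^{-1}(v_k)|
=\prod_{k=1}^l \mu(\mathcal D_k),
$$
which is the claim. One small point to record here: for this maximisation to factor, one needs that each $\pi_k^{-1}(v_k)$ is attained by a single vertex independently of the others, which is immediate because $C_n$ is literally the product of the $C_{n_k}$ and the choices of $A'_k$ are unconstrained across components.

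The main obstacle is the bijection $\pi^{-1}(v)\cong\prod_k\pi_k^{-1}(v_k)$, i.e.\ verifying that membership in the interval $P(D,x,y)$ genuinely decomposes over components. The subtlety is that the grading conditions (3) of Definition~\ref{2.10} — equality of quantum grading and of homotopical grading along the comparison — are global sums over all circles, so one must check that they split: the quantum grading of a labeled configuration is additive over connected components, as is the homotopical grading (being a sum of $\deg x(Z)\cdot[Z]$ over circles), and the "same labeling on the overlap" condition (2) is checked circle-by-circle, so each of (1)--(3) decomposes as a conjunction over $k$. Once this bookkeeping is done carefully — in particular noting that the core and the $\prec$-relation are compatible with restriction to a component because surgery along an arc of $D_k$ only touches circles of $D_k$ — the rest is the elementary maximisation above. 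I would also remark that the same argument shows $\mu_{(D,x,y)}(s_{A'}(D))=\prod_k\mu_{\mathcal D_k}(s_{A'_k}(D_k))$ for each individual $A'$, which is the refined statement from which the proposition follows by taking maxima.
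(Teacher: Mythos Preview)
Your proposal is correct and follows essentially the same approach as the paper: both argue that the fibre $\pi^{-1}(v)$ over a concatenated vertex $v=(v_1,\dots,v_l)$ factors as the product $\prod_k \pi_k^{-1}(v_k)$, and then maximise termwise. The paper's proof is a two-sentence sketch of exactly this idea, while you have supplied the bookkeeping (in particular the check that the grading conditions split over components) that the paper leaves implicit.
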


\begin{proof}
Indeed, for each component $\mathcal D_i$ we can take a vector $v_i$ with the maximal preimage $\pi^{-1}|_{\mathcal D_i}(v_i)$. Then the concatenation $v$ of the vectors $v_1,\dots,v_l$ gives the maximal preimage of the map $\pi$, and $|\pi^{-1}(v)|=\prod_{i=1}^l \left|\pi^{-1}|_{\mathcal D_i}(v_i)\right|$.
\end{proof}

Below we assume that the decorated resolution configuration $D$ is connected.

Let $(D,y)$ be a labeled resolution configuration. Denote the number of circles in $D$ by $\gamma(D)$. 
 Let $\sigma(y_\ast)$ be $1$ if $y_\ast=y_+$, and $-1$ if $y_\ast=y_-$.    
 Let $|y|$ be the sum $\sigma(y_1)+...+\sigma(y_{\gamma(D)})$. 
 Then $-\gamma(D)\le|y|\le\gamma(D)$. 
The quantum grading (igonoring some terms) 
of the enhanced state $(D,y)$ is equal to  $|\pi(D)|+|y|=|y|$. \label{pagetoshiki?}   
%
 Let $A'\subset A(D)$ and $(s_{A'}(D),z)\succ (D,y)$. Since the quantum grading (igonoring some terms) 
 of comparable configurations coincide, 
$|y|=|\pi(s_{A'}(D))|+|z|=|A'|+|z|$. 
Hence, $-\gamma(s_{A'}(D))\le|z|=|y|-|A'|$ and $\gamma(s_{A'}(D))\ge |A'|-|y|$.

In particular, if $\gamma(D)=1$ and $|y|=-1$ then $\gamma(s_{A'}(D))\ge |A'|+1$, hence,  $\gamma(s_{A'}(D))=|A'|+1$.
If $\gamma(D)=1$ and $|y|=1$ then $\gamma(s_{A'}(D))=|A'|\pm 1$.

\begin{proposition}
  Let $a\in A(D)$ be a leaf or coleaf $($see Fig. $\ref{fig:leaf_coleaf})$. 
  Then there exists a unique labeled resolution configuration $(s_a(D),z)$ such that $(D,y)\prec(s_a(D),z)\prec(s(D),x)$, and
  $$P(D,x,y)=P(s_a(D),z,y)\times\{0,1\}.$$

\begin{figure}[h]
\includegraphics[width=0.4\textwidth]{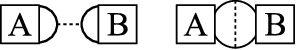}
\caption{{\bf Leaf (left) and coleaf (right) in a resolution configuration
}\label{fig:leaf_coleaf}}
\end{figure}
\end{proposition}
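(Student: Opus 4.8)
\emph{Proof idea.}
The plan is to reduce the statement to a local analysis at the arc $a$, using the explicit shape of a leaf or coleaf displayed in Fig.~\ref{fig:leaf_coleaf}. We may assume $P(D,x,y)\neq\emptyset$: otherwise $(D,y)\not\prec(s(D),x)$, so no $z$ as in the statement can exist and the decorated configuration is empty, leaving nothing to prove. Put $E=s_a(D)$; then $A(E)=A(D)\setminus\{a\}$, so $E$ has index $n-1$, and $s(E)=s(D)$. Reading Fig.~\ref{fig:leaf_coleaf}, the first point is that surgery along $a$ is \emph{detachable}: the arc $a$, together with the circle(s) it meets, forms a region of $D$ disjoint from every other arc. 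Hence for all $A'\subseteq A(D)\setminus\{a\}$ the surgeries along $A'$ and along $a$ act on disjoint parts of $F$, so $s_{A'\cup\{a\}}(D)=s_a(s_{A'}(D))=s_{A'}(s_a(D))$, and the cube map $\pi\colon P(D,x,y)\to C_n=C_{n-1}\times\{0,1\}$ (last coordinate: whether $a\in A'$) respects this product.

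For existence of $z$, pick a maximal chain from $(D,y)$ to $(s(D),x)$; it necessarily has length $n$ and surgers the $n$ arcs one at a time, so $a$ is surgered at some step, and by detachability the chain may be reordered so that $a$ is surgered first. This yields a labeling $z$ of $E$ with $(D,y)\prec(E,z)\prec(s(D),x)$. For uniqueness, condition $(2)$ of Definition~\ref{2.10} forces $z$ to agree with $y$ on every circle of $E$ already present in $D$; on the one or two circles touched by the $a$-surgery, preservation of the quantum and homotopical gradings (condition $(3)$) together with compatibility with $x$ pins the value down — in the leaf case the label of the merged circle is already determined by the two labels being merged, while in the coleaf case the circle cut off by the $a$-surgery persists unchanged up to $s(D)$, so its $z$-label must equal its $x$-label, which fixes the split.

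For the product decomposition, let $P(s_a(D),z,y)$ be the poset of the index-$(n-1)$ decorated resolution configuration obtained by absorbing the $a$-surgery, i.e. the set of labeled configurations $(s_{A''\cup\{a\}}(D),w)$ lying between $(s_a(D),z)$ and $(s(D),x)$. Define $\Phi\colon P(D,x,y)\to P(s_a(D),z,y)\times\{0,1\}$ by $\Phi(s_{A'}(D),w)=\bigl((s_{A'}(D),w),1\bigr)$ when $a\in A'$, and $\Phi(s_{A'}(D),w)=\bigl((s_{A'\cup\{a\}}(D),w'),0\bigr)$ when $a\notin A'$, where $w'$ agrees with $w$ off the local piece and on the circle(s) produced by the $a$-surgery carries the unique label dictated by the quantum grading. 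The candidate inverse sends $\bigl((s_{A''\cup\{a\}}(D),w''),1\bigr)$ to $(s_{A''\cup\{a\}}(D),w'')$ and $\bigl((s_{A''\cup\{a\}}(D),w''),0\bigr)$ to $(s_{A''}(D),w''')$, where $w'''$ undoes the $a$-surgery with the forced label. I would then check that $\Phi$ is well defined, that both maps land where claimed, that they are mutually inverse, and that $\Phi$ is an isomorphism of posets (the product carrying the order with $0<1$ on the last factor), giving $P(D,x,y)=P(s_a(D),z,y)\times\{0,1\}$.

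The only substantial point is the well-definedness of $\Phi$: the grading-forced relabelling $w\mapsto w'$ must always be a legal labeled resolution configuration lying in $P(s_a(D),z,y)$, equivalently both values of the last cube coordinate must be realised over every base vertex. This is exactly where the leaf/coleaf hypothesis is used. By the local form, the cut-off circle has its label held fixed (by $y$ for a leaf, by $x$ for a coleaf); and since an $x_-$-label can only propagate to $x_-$-labels under surgeries, the surgery along $a$ that is forced to occur in any element of $P(D,x,y)$ already excludes the only forbidden case (two $x_-$-labels colliding). Granting this finite check against the configurations of Fig.~\ref{fig:leaf_coleaf}, the reordering step of the existence argument and the bookkeeping of the index-$(n-1)$ configuration are routine, and the proposition follows.
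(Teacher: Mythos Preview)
Your overall architecture---existence of $z$ via reordering a maximal chain so that $a$ is surgered first, and the product decomposition via the bijection between $P_0=\{(s_{A'}(D),w)\in P(D,x,y):a\notin A'\}$ and $P_1=\{(s_{A'}(D),w):a\in A'\}$---matches the paper's approach. The leaf case is handled correctly.

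There is, however, a genuine gap in your treatment of the coleaf case. You argue that ``the circle cut off by the $a$-surgery persists unchanged up to $s(D)$, so its $z$-label must equal its $x$-label''. This presumes that one of the two pieces produced by the coleaf surgery is a bare circle carrying no further arcs. That is not what a coleaf is here: a coleaf is an arc $a$ whose surgery disconnects the configuration, so $s_a(D)=D_1\sqcup D_2$, but \emph{each} component $D_i$ may carry arcs of its own (for instance, in a chord diagram a non-interlacing chord can separate two nonempty packets of chords). In that situation neither of the two new circles persists to $s(D)$, and your pinning-down of the label fails. The same assumption recurs in your well-definedness argument for $\Phi$, where you again invoke a label ``held fixed by $x$ for a coleaf''.

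The paper's fix is short and worth absorbing: because $s_a(D)=D_1\sqcup D_2$ one also has $s(D)=s(D_1)\sqcup s(D_2)$, and for any labeled configuration in $P_1$ the quantum grading is conserved \emph{componentwise}. Concretely $|\pi(D_i)|+|z_i|=|\pi(s(D_i))|+\bigl|x|_{s(D_i)}\bigr|$, so $|z_i|$ is determined by $x$; since all labels in $(D_i,z_i)$ except the one on the circle incident to $a$ are inherited from $y$, that last label is forced. This argument works uniformly along the cube (replacing $D_i$ by $s_{A'_i}(D_i)$) and gives both the uniqueness of $z$ and the well-definedness of the bijection $s_a\colon P_0\to P_1$. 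With this correction in place, the rest of your outline goes through.
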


\begin{proof}
If $a$ is a leaf then $(s_a(D),z)$ is uniquely determined. If $a$ is a coleaf then $s_a(D)$ splits into two components $(D_1,z_1)$ and $(D_2,z_2)$. Then $s(D)=s(D_1)\sqcup s(D_2)$. The ambiguity of $z_1, z_2$ concerns the labels of the circles incident to $a$. But one can restore the labels using the quantum grading  (igonoring some terms) 
of the component:  $|\pi(D_i)|+|z_i|=|\pi(s(D_i))|+|x|_{s(D_i)}|$, so  $|z_i|=|\pi(s(D_i))|+|x|_{s(D_i)}|-|\pi(D_i)|$. If one knows the labels of all the circles in $(D_i,z_i)$ except one and the sum of all labels $|z_i|$, then the last label is uniquely determined.

Let $P_0=\{(s_{A}(D),u)\in P(D,x,y)\,|\, a\not\in A\}$  and $P_1=\{(s_{A}(D),u)\in P(D,x,y)\,|\, a\in A\}$. Using the reasonings above, we can prove that the surgery $s_a$ establishes an bijection between $P_0$ and $P_1=P(s_a(D),z,y)$. This bijection is compatible with the order in $P_0$ and $P_1$.
\end{proof}

Let $(D,x,y)$ be a nonempty decorated resolution configuration with one circle. 
Then $D$ is a chord diagram. 
Let $M=(m_{ab})_{a,b\in A(D)}$ be 
the $\Z_2$-valued {\it interlacement matrix}: \label{pageim}
Note $m_{aa}=0$.  

$m_{ab}=lk(a,b)$, that is  $m_{ab}=1$ if the arcs $a$ and $b$ are linked, and $m_{ab}=0$ if they are not. 
Here, `linking', $lk$, means that the boundary of $a$ and that of $b$ are linked in the loop $D$.  
\label{pageIgor1}   

Then $M$ is a symmetric matrix with zero diagonal.

For any labeled resolution $(s_{A'}(D),z)\in P(D,x,y)$ the number of circles $\gamma(s_{A'}(D))$ is given by the circuit-nullity formula. \label{pageuori}

\begin{proposition}\label{pr|}
  $\gamma(s_{A'}(D))=corank M|_{A'}+1$.
\end{proposition}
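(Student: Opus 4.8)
The plan is to recognise the claim as a $\Z_2$-version of the classical circuit--nullity formula and to prove it by exhibiting $s_{A'}(D)$ as the boundary of an explicit compact surface whose Euler characteristic and intersection form encode the two sides of the identity. Concretely: let $\Delta$ be an abstract disk with $\partial\Delta$ the single circle $Z(D)$, on whose boundary the $2|A(D)|$ feet of the arcs are marked. For each arc $a\in A'$ glue to $\Delta$ a band $B_a$ with core $a$, attached along the two short arcs of $\partial\Delta$ near the feet of $a$ according to the parallel translates of $a$, and let $\Sigma=\Sigma_{A'}$ be the resulting surface. By Definition~\ref{def:surgery}, $\partial\Sigma=s_{A'}(D)$, so $\gamma(s_{A'}(D))=b_0(\Sigma)$, the number of boundary circles. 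Since each $B_a$ is an untwisted band (equivalently, the basic surgery along a single arc of a one-circle configuration produces two circles), $\Sigma$ is orientable; it is also connected and $\chi(\Sigma)=\chi(\Delta)-|A'|=1-n$, so if $g$ denotes its genus then $2-2g-b_0(\Sigma)=1-n$, i.e. $\gamma(s_{A'}(D))=n+1-2g$.

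It then remains to prove $2g=\operatorname{rank}M|_{A'}$. For each $a\in A'$ choose an arc $\delta_a\subset\Delta$ joining the two feet of $B_a$ and let $\beta_a\in H_1(\Sigma;\Z_2)$ be the class of the loop (core of $B_a$) $\cup\ \delta_a$. Because $\Delta$ is contractible, $H_1(\Sigma;\Z_2)\cong H_1(\Sigma,\Delta;\Z_2)$, and the right-hand group has the bands as a basis; hence $\{\beta_a\}_{a\in A'}$ is a basis of $H_1(\Sigma;\Z_2)$. In this basis the $\Z_2$-intersection form is exactly $M|_{A'}$: for $a\neq b$ the cores lie in disjoint bands, so $\beta_a\cdot\beta_b$ is the mod-$2$ count of intersections of $\delta_a$ with $\delta_b$ inside $\Delta$, which is $1$ iff the feet of $a$ and $b$ interleave along $\partial\Delta$, i.e. iff $m_{ab}=1$; and $\beta_a\cdot\beta_a=0$ since $B_a$ is untwisted. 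For a compact orientable surface with boundary the $\Z_2$-intersection form on $H_1$ has rank $2g$ (its radical is spanned by the classes of the boundary circles), so $\operatorname{rank}M|_{A'}=2g$, i.e. $\operatorname{corank}M|_{A'}=n-2g$, and combining with the previous paragraph gives $\gamma(s_{A'}(D))=n+1-2g=\operatorname{corank}M|_{A'}+1$.

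The one genuine obstacle is the untwistedness used above: it is exactly what makes $\Sigma$ orientable and what forces the diagonal of the intersection form to vanish, so that the form is literally $M|_{A'}$ and not merely some symmetric matrix possibly carrying $1$'s on the diagonal --- and it is the feature that sets resolution configurations of diagrams in a surface (whose arcs carry the ambient framing) apart from abstract chord diagrams, for which the stated formula would fail. I would settle it by the local model at a single foot, where everything happens inside a disk of the surface. A purely combinatorial alternative, kept in reserve, is induction on $n=|A'|$: if the row of $M|_{A'}$ at some $a$ is zero, surgering $a$ first splits $Z(D)$, and with it $A'\setminus\{a\}$, into two one-circle subdiagrams whose interlacement matrices are diagonal blocks of $M|_{A'}$, and one adds up two smaller instances; if instead $a$ interlaces some $b\in A'$, then $s_{\{a,b\}}(D)$ is again a one-circle configuration whose interlacement matrix on $A'\setminus\{a,b\}$ is the $\Z_2$-principal pivot $N+pq^{\mathsf T}+qp^{\mathsf T}$ of $M|_{A'}$, where $N=M|_{A'\setminus\{a,b\}}$ and $p,q$ are the $a$- and $b$-rows restricted to $A'\setminus\{a,b\}$, and one invokes that pivoting preserves corank; the delicate point there is verifying that this double surgery realises the matrix pivot, which is the combinatorial shadow of the surface argument.
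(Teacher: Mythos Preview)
The paper gives no proof here: it simply names the identity as ``the circuit--nullity formula'' and moves on. Your surface-theoretic argument---disk plus bands, Euler characteristic, and identification of the $\Z_2$-intersection form on $H_1(\Sigma;\Z_2)$ with $M|_{A'}$---is a correct and standard derivation, and considerably more than the paper supplies. The inductive backup via principal pivoting is likewise a valid alternative route.

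The one place that does not quite work is your proposed verification of untwistedness ``by the local model at a single foot.'' Whether the band $B_a$ attaches to the abstract disk $\Delta$ in an orientation-compatible way is a \emph{two}-foot condition: orient $C$ and record on which local side of $C$ the arc $a$ departs at each endpoint; the band is untwisted exactly when these two sides agree. If $C$ is separating in $F$---in particular contractible, which is the only situation the paper actually uses later (cf.\ Proposition~\ref{prop:decorated_resolution_1})---then any arc lies entirely in one complementary component and the two sides automatically agree, so your argument goes through unchanged. But for a non-separating $C$ an arc can approach from opposite local sides, giving a genuinely twisted band; then $\gamma(s_{\{a\}}(D))=1$ while $\operatorname{corank}M|_{\{a\}}+1=2$, and the zero-diagonal formula fails. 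The same hidden hypothesis sits in your inductive backup (``row of $a$ is zero $\Rightarrow$ surgering $a$ splits'' already presumes $a$ is untwisted). The clean fix is either to add the hypothesis that $C$ is separating, which covers everything the paper needs, or to place the self-framing of each arc on the diagonal of $M$, recovering the version of circuit--nullity valid for arbitrary band attachments.
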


On the other hand, the quantum grading (igonoring some terms) gives the equality  $|z|+|A'|=|y|$, so $\gamma(s_{A'}(D))\ge |A'|-|y|$.

If $|y|=-1$ then $\gamma(s_{A'}(D))=|A'|+1$ and all labels in $z$ are equal to $x_-$. Thus, $\mu(D,x,y)=1$.

If $|y|=1$ then $\gamma(s_{A'}(D))\ge |A'|-1$. Hence,
$$rank M|_{A'}=|A'|-corank M|_{A'}=|A'|-\gamma(s_{A'}(D))+1\le 2.$$
In particular, $rank M\le 2$. 
Since $M$ is symmetric with zero diagonal, then $rank M$ is even.

If $rank M=0$ then $M=0$ and all the arcs are coleaves. Thus, $\mu(D,x,y)=1$.

If $rank M=2$ then there are two independent vectors $a,b\in\Z_2^n$ such that any row of the matrix is equal to $0,a,b$ or $a+b$. This means the arcs of the chord diagram split into four subsets: three sets of parallel chords and coleaves, see Fig.~\ref{fig:one_circle_state}. Then $A(D)=A_a\sqcup A_b\sqcup A_{a+b}\sqcup A_0$.

\begin{figure}[h]
\includegraphics[width=0.15\textwidth]{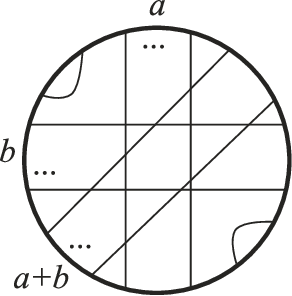}
\caption{{\bf Combinatorial structure of a non-empty decorated resolution configuration with one circle}\label{fig:one_circle_state}}
\end{figure}

\begin{proposition}\label{prop:decorated_resolution_1}
  Let $(D,x,y)$ be a non-empty decorated resolution configuration with one circle such that its interlacement matrix has $rank M = 2$. Then
  
  \begin{enumerate}
    \item[$(1)$] the circle of the configuration $D$ is contractible
    \item[$(2)$] the arcs which belong to one subset $A_a, A_b, A_{a+b}$ are homotopical in $F$.
  \end{enumerate}
\end{proposition}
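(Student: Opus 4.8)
The plan is to analyze the combinatorial constraints on the chord diagram $D$ coming from $\operatorname{rank} M = 2$, together with the quantum‑grading bookkeeping already established in the excerpt, and then translate these into homotopical statements in $F$. First I would fix the two independent vectors $a,b\in\Z_2^n$ spanning the row space of $M$ and partition the arcs as $A(D)=A_a\sqcup A_b\sqcup A_{a+b}\sqcup A_0$ according to which of $0,a,b,a+b$ the corresponding row equals, exactly as in the paragraph preceding Proposition~\ref{prop:decorated_resolution_1}. Since $M$ is the $\Z_2$ interlacement matrix of a chord diagram on one circle, being a leaf/coleaf is detected by the row being zero; so $A_0$ consists of coleaves, and within each of $A_a, A_b, A_{a+b}$ any two arcs are \emph{non‑interlaced} (their rows are equal, hence their mutual linking entry, being the value of one row at the other's index, is forced to $0$ by skew‑symmetry over $\Z_2$ once one checks the diagonal is $0$). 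Thus each of the three sets is a family of pairwise parallel (nested or disjoint) chords on the circle.

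For part (1), I would argue that the circle of $D$ must be contractible in $F$. The key input is the quantum/homotopical grading constraint: since $\gr_{\mathfrak H}$ is preserved under $\prec$, and since $(D,y)\prec(s(D),x)$, the homotopy classes of all circles appearing in all intermediate resolutions are tightly controlled. Concretely, take the surgery $s_{A'}(D)$ with $A'=A_a$: by the circuit–nullity formula $\gamma(s_{A_a}(D)) = \operatorname{corank} M|_{A_a} + 1$. Because all rows indexed by $A_a$ are equal, $M|_{A_a}$ has rank $0$, so $\gamma(s_{A_a}(D)) = |A_a| + 1$, i.e. surgering the parallel family $A_a$ splits the circle into $|A_a|+1$ circles, \emph{all} of which are null‑homotopic (they bound the sub‑discs cut off by the nested parallel chords together with the original circle if it were a disc boundary). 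Running this for $A_a$, $A_b$, $A_{a+b}$ and combining with the requirement that $\gr_{\mathfrak H}$ of the final configuration $(s(D),x)$ equals that of $(D,y)$, I would derive that $[Z(D)]$, the homotopy class of the single circle, must be zero in $\mathfrak H$. Ruling out the torsion subtlety (that $[\gamma]=[-\gamma]$ only) will require using that the circle, after a sequence of surgeries along parallel chords, returns to a configuration of contractible circles, which forces $[Z(D)]=\bigcirc$; this uses that in a surface a simple closed curve that becomes a union of contractible curves after surgery along arcs attached to it is itself contractible, because the arcs together with sub‑arcs of the circle bound discs.

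For part (2), that all arcs inside one subset, say $A_a$, are homotopic rel endpoints in $F$: here I would use that the arcs in $A_a$ are pairwise parallel chords on the circle, so consecutive ones cobound (with an arc of the circle) an embedded bigon in $F$; I must show this bigon is an embedded disc, not an annulus wrapping around a handle. This is where the homotopical grading again enters: if a bigon between two parallel chords were essential, then surgering along those two chords would produce a non‑contractible circle, contradicting the computation above (all circles in $s_{A_a}(D)$ are contractible because $|z|=-|A'|+|y|=|A_a|\cdot(-1)+|y|$ forces all their labels negative when $|y|=-1$, and a circle labeled $x_-$ contributing $-[Z]$ to $\gr_{\mathfrak H}$ must have $[Z]=0$; for $|y|=1$ one uses the rank $\le 2$ bound more carefully). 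Hence each such bigon is a disc, the two chords are homotopic, and transitivity over the parallel family finishes it.

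The main obstacle I anticipate is \textbf{part (1)}, specifically handling the $|y|=1$ case cleanly: there the labels in intermediate configurations are not all negative, so one cannot immediately read off contractibility from a single surgery, and one must genuinely exploit the interaction between $\operatorname{rank} M\le 2$, the even‑rank constraint, and the homotopical grading to pin down $[Z(D)]=0$. The $|y|=-1$ case is comparatively easy since $\mu=1$ and all intermediate labels are $x_-$, which makes the grading argument essentially immediate. I would structure the write‑up to dispatch $|y|=-1$ first and then treat $\operatorname{rank} M = 2$ with $|y|=1$ as the substantive case.
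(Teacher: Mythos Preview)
Your overall strategy---exploit the homotopical grading to derive contradictions---is the right one, but the execution has real gaps compared to the paper's much more targeted argument.

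First, the case split on $|y|$ is illusory: the analysis immediately preceding the proposition shows that $\operatorname{rank} M=2$ can only occur when $|y|=1$, so the single circle is labeled $x_+$ from the outset and the ``easy'' $|y|=-1$ case never arises under the hypothesis.

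For part (1), your plan to surger along \emph{all} of $A_a$ and claim the resulting $|A_a|+1$ circles are contractible is circular: you justify contractibility by saying the circles bound subdiscs ``if it were a disc boundary,'' which is precisely what you are trying to prove. The subsequent ``combine for $A_a,A_b,A_{a+b}$'' step is too vague to constitute an argument. The paper instead surgers along just \emph{two} interlaced arcs, one $a\in A_a$ and one $b\in A_b$. The $2\times 2$ submatrix has rank $2$, so the result $D'=s_{\{a,b\}}(D)$ is a single circle; quantum grading forces its label to be $x_-$. Comparing homotopical gradings gives $[D]=-[D']$, which the paper asserts fails when $D$ is non-contractible.

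For part (2), your claim that ``surgering along those two chords would produce a non-contractible circle'' is combinatorially wrong: two \emph{non-interlaced} chords on one circle yield three circles, and the labels are not all $x_-$ (one must be $x_+$ since $|z|=1-2=-1$), so you cannot read off the contradiction directly. The paper avoids this by surgering along $a_1,a_2\in A_a$ \emph{together with} a single $b\in A_b$; then $\operatorname{rank} M|_{\{a_1,a_2,b\}}=2$ gives exactly two circles, both forced to carry $x_-$, with homotopy classes $[a_1 b^{\pm} a_2^{-1} b^{\mp}]$ and $[a_1^{-1}a_2]$. Since by part (1) the grading must vanish, one gets an immediate contradiction when $a_1,a_2$ are not homotopic.

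In short: replace your large surgeries by the paper's minimal ones (two arcs for (1), three arcs for (2)); this makes the label pattern and hence the homotopical-grading contradiction explicit.
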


\begin{proof}
(1) Assume $D$ is not contractible. Take arcs $a\in A_a$, $b\in A_b$. Let $D'=s_{\{a,b\}}(D)$. Then $D$ has label $x_+$ and $D'$ has label $x_-$, hence, $\gr_{\mathfrak H}(D,x_+)=[D]$ and $\gr_{\mathfrak H}(D',x_-)=-[D']$. But $[D]\ne -[D']$.

(2) Let $a_1,a_2\in A_a$ be two non-homotopical arcs, and $b\in A_b$. Then the resolution $s_{\{a_1,a_2, b\}}(D)$ consists of two circles of homotopy type $[a_1b^\pm a_2^{-1}b^\mp]$ and $[a_1^{-1}a_2]$ with labels $x_-,x_-$. Then the homotopical grading of this labeled resolution can not be zero. But $\gr_{\mathfrak H}(D,x_+)=0$ because $D$ is contractible.
 \end{proof}

Let $(D,x,y)$ be as 
in Proposition 
\ref{prop:decorated_resolution_1}. 
The complement to the circle of the resolution configuration consists of two components, one of which is contractible. We will call an arc {\em inner} if it lies in the contractible component, and {\em outer} otherwise.

\begin{proposition}\label{prop:decorated_resolution_2}
  Let $(D,x,y)$ be as 
in Proposition 
\ref{prop:decorated_resolution_1}. 
Then
  \begin{enumerate}
    \item[$(1)$] the arcs from one of the subsets $A_a, A_b, A_{a+b}$ are either all inner or all outer
    \item[$(2)$] let the subsets $A_a, A_b$ consist of outer arcs. Then the surface $F$ is the torus, and the homology type of the arcs  in $A_{a+b}$ 
    $($when $A_{a+b}\ne\emptyset)$ is the sum of homology classes of an arc in $A_a$ and an arc in $A_b$.
    \item[$(3)$] let the subset $A_a$ consist of inner arcs. Then $A_b$ consists of outer arcs and $A_{a+b}=\emptyset$.
  \end{enumerate}
\end{proposition}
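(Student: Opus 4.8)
The plan is to prove the three clauses in order, leaning on what is already in place: $D$ is a contractible circle bounding a disk $\Delta$ (its contractible complementary component), $\Sigma=F\setminus\mathrm{int}\,\Delta$ is the other complementary component, $A(D)=A_a\sqcup A_b\sqcup A_{a+b}\sqcup A_0$ is the splitting into parallelism classes and coleaves, inside any one of $A_a,A_b,A_{a+b}$ the arcs are pairwise homotopic rel $D$ (Proposition~\ref{prop:decorated_resolution_1}), $M$ restricted to a single class vanishes, and the rank-$2$ hypothesis forces any two distinct classes to carry a pair of interlaced arcs, so for a nonempty class $A_a$ with $a\neq0$ every arc of $A_a$ interlaces every arc of $A_b\cup A_{a+b}$. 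For an outer arc $c$, write $\widetilde c$ for the simple closed curve obtained by closing $c$ with an arc of $D$; since $[D]=0$ in $\mathfrak H$ this has a well-defined free homotopy class, it is null-homotopic precisely when $c$ is inner (then $\widetilde c$ lies in the disk $\bar\Delta$), and interlaced arcs have closed-up curves meeting once.

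First I would record the constraint that is used throughout. Since $D$ is contractible and $|y|=1$, $\gr_{\mathfrak H}(D,y)=[D]=0$, hence $(D,y)\prec(s(D),x)$ forces $\gr_{\mathfrak H}(s(D),x)=0$. By the circuit--nullity formula $s(D)$ has $\mathrm{corank}\,M+1=|A(D)|-1$ circles while its total label is $|y|-|A(D)|=1-|A(D)|$, the minimum possible, so every circle of $s(D)$ is labeled $x_-$; then $0=\gr_{\mathfrak H}(s(D),x)=-\sum_{Z}[Z]$ forces every circle of $s(D)$ to be contractible in $F$. Moreover a regular neighbourhood $R$ of the graph $Z(D)\cup A(D)$ is a compact surface with $\chi(R)=-|A(D)|$ whose boundary circles are a parallel copy of $D$ together with the maximal-surgery circles $Z(s(D))$; counting Euler characteristics then gives $R$ genus $\mathrm{rank}\,M/2=1$. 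Since every boundary circle of $R$ is null-homotopic in $F$, each bounds a disk, and that disk is disjoint from $R$ (a positive-genus subsurface cannot sit in a disk); capping off recovers $F$, so $F$ is a torus. This already gives the first clause of part 2 (and shows $F$ is a torus whenever $\mathrm{rank}\,M=2$).

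For part 1: given $a_0,a_1\in A_a$, they are homotopic rel $D$; homotopic properly embedded arcs in a surface are isotopic through arcs with endpoints on $D$, and such an isotopy extends to an ambient isotopy of $F$ carrying $D$ to itself, hence carrying $\Delta$ to itself, so $a_0$ is inner iff $a_1$ is. (As a consequence, if $A_a$ contains an inner arc then each arc of $A_b\cup A_{a+b}$ interlaces it and is therefore outer, so at most one of the three classes is inner.) For the homology statement in part 2, having established $F=T^2$, fix interlaced arcs $a_1\in A_a$, $b_1\in A_b$; their closed-up curves meet once, so $\{[\widetilde{a_1}],[\widetilde{b_1}]\}$ is a basis of $H_1(T^2)$. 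If $c\in A_{a+b}$ then $c$ interlaces both $a_1$ and $b_1$, so $\widetilde c$ meets each of $\widetilde{a_1},\widetilde{b_1}$ once; on the torus this forces $[\widetilde c]=\pm[\widetilde{a_1}]\pm[\widetilde{b_1}]$, and the signs are pinned by the cyclic order of the endpoints of $a_1,b_1,c$ on $D$, giving $[\widetilde c]=[\widetilde{a_1}]+[\widetilde{b_1}]$. For part 3: if $A_a$ is inner, then by part 1 and the remark above $A_b,A_{a+b}$ are outer; if $A_{a+b}\neq\emptyset$, apply part 2 to the outer pair $(A_b,A_{a+b})$, for which the third class is $A_a$, so the homology class of an arc of $A_a$ equals $[\widetilde{b_1}]+[\widetilde c]$ for $b_1\in A_b$, $c\in A_{a+b}$; but that arc is inner, so its closed-up curve is null-homotopic, forcing $[\widetilde c]=-[\widetilde{b_1}]$, whereas $b_1$ and $c$ interlace, so $[\widetilde{b_1}]\cdot[\widetilde c]$ is odd, contradicting $[\widetilde{b_1}]\cdot(-[\widetilde{b_1}])=0$. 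Hence $A_{a+b}=\emptyset$.

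The main obstacle will be the second paragraph: the passage from the homological fact that the maximal surgery is made of contractible circles to the topological conclusion that $F$ is a torus requires verifying that the ribbon surface $R$ really has the stated boundary components and genus, and that null-homotopy of the boundary circles forces them to bound disks on the side away from $R$. The only other delicate point is fixing the signs in the homology identity of part 2; apart from these, everything reduces to the combinatorics of the interlacement matrix together with the single constraint $\gr_{\mathfrak H}(s(D),x)=0$.
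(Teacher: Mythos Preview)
There is a genuine gap in your second paragraph: the claim that $F$ is a torus whenever $\mathrm{rank}\,M=2$ is false. A ladybug configuration $L_\alpha$ (contractible circle, one inner arc $a$, one outer arc $b$ with $[\widetilde b]=\alpha\ne 0$, interlaced) has $\mathrm{rank}\,M=2$ and gives a nonempty decorated resolution configuration on any surface of genus $\ge 1$. The error is your description of $\partial R$ as ``a parallel copy of $D$ together with $Z(s(D))$'': this is only correct when all arcs lie on \emph{one} side of $D$. When arcs lie on both sides, $\partial R$ is the union of $s_{A_{\mathrm{in}}}(D)$ (surgery along the inner arcs, lying in $\Delta$) and $s_{A_{\mathrm{out}}}(D)$ (lying in $\Sigma$), neither of which need be a parallel copy of $D$ or equal to $Z(s(D))$. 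In the $L_\alpha$ example $\partial R$ has four circles (two contractible from $s_a(D)$, two of class $\alpha$ from $s_b(D)$), so $g(R)=0$ rather than $1$, the outer boundary circles are not null-homotopic, and the capping argument collapses. The paper avoids this by never using the full graph: for part~2 it takes a single pair $a\in A_a$, $b\in A_b$, both outer by hypothesis, observes that $s_{\{a,b\}}(D)$ is one circle forced contractible by the homotopical grading, and then the neighbourhood of $D\cup a\cup b$ (now genuinely one-sided) together with the disk $\Delta$ and the disk bounded by $s_{\{a,b\}}(D)$ rebuilds $F$ as a torus. For part~3 it likewise computes the grading of a specific small surgery $s_{\{a,b,c\}}(D)$.

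Your part-1 argument also has a gap: Proposition~\ref{prop:decorated_resolution_1} only gives that the closed-up loops $\widetilde{a_0},\widetilde{a_1}$ are freely homotopic, not that $a_0,a_1$ are homotopic rel $D$, and even granting that, an isotopy of arcs rel endpoints need not extend to one carrying $D$ to itself. A direct fix: if $a_1\in A_a$ is inner then any interlaced $b\in A_b\cup A_{a+b}$ is outer (Jordan curve in the disk $\Delta$); if some $a_2\in A_a$ were outer, then $a_2,b$ would be disjoint interlaced outer arcs, so collapsing $\Delta$ to a point makes $\widetilde{a_2},\widetilde b$ into loops meeting transversally once, hence $[\widetilde{a_2}]\cdot[\widetilde b]=\pm 1$ in $H_1(F)$; but $[\widetilde{a_2}]=[\widetilde{a_1}]=0$ since $a_1$ is inner, a contradiction.
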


\begin{proof}
  (1) Assume that $a_1,a_2\in A_a$ and $b\in A_b$. Since $a_i$, $i=1,2$, and $b$ are interlaced and do not intersect, then these chords lie in different components to the circle of $D$. Hence, the chords $a_1$ and $a_2$ lie in one component, i.e. they are both internal or both external. Thus, the chords from the subset $A_a$ are all internal or all external. The same statements holds for the subsets $A_b$ and $A_{a+b}$.

  (2) Let $a\in A_a$ and $b\in A_b$ be outer chords. The intersection index of the loops $a$ and $b$ is equal to $1$, hence, $a$ and $b$ present independent homology classes. The resolution configuration $s_{\{a,b\}}(D)$ consists of one circle with the label $x_-$. This circle must be contractible due to the homotopical grading. Hence, the surface $F$ can be obtained by gluing two disc along the arcs $a$ and $b$. Thus, $F$ is the torus. Let $c\in A_{a+b}$. Since $c$ is disjoint from $a$ and $b$ and $F$ is the torus, the homology class corresponding to $c$ is equal to $a\pm b$, i.e. it is the sum of the homology classes of $a$ and $b$ (up to signs).

  (3)  
  Let $a\in A_a$ be inner and $b\in A_b$ and $c\in A_{a+b}$. Then $b$ and $c$ are outer chords. The intersection index of the loops $b$ and $c$ is equal to $1$, hence, $b$ and $c$ present independent homology classes. The resolution configuration $s_{\{a,b,c\}}(D)$ consists of two circles with the labels $x_-$. Then the homotopical grading of this labeled resolution is equal $-2[bc^{-1}]\ne 0$ but the initial resolution configuration has homotopical grading $0$. Thus, the decorated resolution configuration must be empty.
\end{proof}

Note that the necessary conditions of 
Propositions \ref{prop:decorated_resolution_1} and  
%
%
%
 \ref{prop:decorated_resolution_2} 
are also sufficient for the decorated resolution configuration to be 
non-empty.

\begin{proposition}\label{prop:1circle_multiplicity}   
  Let $(D,x,y)$ be as 
 be as in Proposition 
\ref{prop:decorated_resolution_1}. 

$(1)$ The multiplicity of a resolution configuration $s_{A'}(D)$ is equal to $2$ if $A'$ contains chords from exactly one of subsets $A_a, A_b, A_{a+b}$, and is equal to $1$ otherwise.

$(2)$ Let $(D',z,w)\subset (D,x,y)$ be a decorated resolution configuration with an initial configuration $(s_{A'}(D),w)$ and final configuration $(s(A''),z)$, where $D'=s_{A'}(D)$ and $A'\subset A''\subset A(D)$. Then $\mu(D',z,w)=2$ if and only if $A'\subset A_0$ and $A''$ intersects at least two of the subsets $A_a$, $A_b$, $A_{a+b}$.
\end{proposition}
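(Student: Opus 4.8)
The plan is to reduce both parts to one counting identity for the number of $x_{+}$-labels of an admissible labeling, together with the rank behaviour of the interlacement matrix $M$ established above. First, $\operatorname{rank} M = 2$ forces $|y| = 1$: if $|y| = -1$ then the circuit--nullity formula and the identity $|z| + |A'| = |y|$ give $\gamma(s_{A'}(D)) = |A'| + 1 = \operatorname{corank} M|_{A'} + 1$ for every $A'\subseteq A(D)$, whence $\operatorname{rank} M|_{A'} = 0$ always and in particular $\operatorname{rank} M = 0$, a contradiction. So $y$ labels the single circle of $D$ by $x_{+}$, and since $\gamma(s(D)) = \operatorname{corank} M + 1 = |A(D)| - 1$ while $|x| = |y| - |A(D)|$, every circle of $s(D)$ is labeled $x_{-}$. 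Now fix $B\subseteq A(D)$ with $(s_{B}(D),v)\in P(D,x,y)$; writing $\gamma = \gamma(s_{B}(D)) = \operatorname{corank} M|_{B} + 1$ and using $|v|+|B| = 1$, the number of circles of $v$ labeled $x_{+}$ equals $\tfrac12(|v|+\gamma) = 1 - \tfrac12\operatorname{rank} M|_{B}$. Since $M$ is alternating over $\Z_{2}$ its principal submatrices have even rank, and $\operatorname{rank} M|_{B}\le\operatorname{rank} M = 2$, so this number is $1$ or $0$. Recalling that a row of $M$ is $0$ on $A_{0}$ and equals the characteristic vectors $\chi_{A_{b}\sqcup A_{a+b}}$, $\chi_{A_{a}\sqcup A_{a+b}}$, $\chi_{A_{a}\sqcup A_{b}}$ on $A_{a},A_{b},A_{a+b}$ respectively, one checks directly that $\operatorname{rank} M|_{B} = 2$ exactly when $B$ meets at least two of $A_{a},A_{b},A_{a+b}$, and $\operatorname{rank} M|_{B} = 0$ otherwise.

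For Part 1 I distinguish three cases for $A'$. If $A'$ meets at least two of the three subsets, every admissible labeling of $s_{A'}(D)$ has no $x_{+}$-label, hence is the all-$x_{-}$ labeling, so $\mu_{(D,x,y)}(s_{A'}(D))\le 1$, and it equals $1$ since the decorated configuration is non-empty. If $A'\subseteq A_{0}$, every arc of $A'$ is a coleaf and, by the leaf/coleaf proposition (which gives $P(D,x,y)\cong P(s_{a}(D),z,y)\times\{0,1\}$ for a coleaf $a$), surgering a coleaf yields a unique intermediate labeled resolution configuration; iterating, $s_{A'}(D)$ carries a unique admissible labeling, so $\mu_{(D,x,y)}(s_{A'}(D)) = 1$. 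If $A'$ meets exactly one subset, say $A_{a}$, then surgering the coleaves of $A'$ first leaves the multiplicity unchanged, so we may assume $A'\subseteq A_{a}$, a set of pairwise non-interlacing chords on the contractible circle $C$ (Proposition~\ref{prop:decorated_resolution_1}). Every admissible labeling of $s_{A'}(D)$ has exactly one $x_{+}$-label, and a local analysis of this surgery, using Propositions~\ref{prop:decorated_resolution_1} and~\ref{prop:decorated_resolution_2}, shows that precisely two of the $|A'|+1$ resulting circles can carry it, namely those for which the resulting labeling lies in $P(D,x,y)$ (when the $A_{a}$-chords are essential on $F$ these are the two non-contractible circles, of homology classes $a$ and $-a$, the vanishing of the homotopical grading and the torsion-freeness of $\mathfrak H$ excluding the contractible ones; when they are null-homotopic this is the classical ladybug alternative of~\cite{LSk}). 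Both labelings occur, so $\mu_{(D,x,y)}(s_{A'}(D)) = 2$.

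Part 2 follows by reducing to the core. The multiplicity $\mu(D',z,w)$ depends only on the core $c(D',z,w)$, whose underlying resolution configuration has circle set $Z(s_{A'}(D))$ and arc set $A''\setminus A'$. If $A'\subseteq A_{0}$, then $s_{A'}(D)$ is the circle $C$ (still carrying $A(D)\setminus A'$) together with $|A'|$ isolated contractible circles, and every arc of $A''\setminus A'$ lies on $C$; hence $c(D',z,w)$ is a one-circle decorated configuration with interlacement matrix $M|_{A''\setminus A'}$, which differs from $M|_{A''}$ only by deletion of the zero rows indexed by $A'\subseteq A_{0}$, so it has the same rank and meets the same subsets. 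If $A''$ meets at least two of $A_{a},A_{b},A_{a+b}$, this is a one-circle configuration of rank $2$ and Part 1 (applied to a sub-surgery meeting exactly one subset) gives $\mu(c(D',z,w)) = 2$; otherwise it has rank $0$, all its arcs are coleaves and $\mu(c(D',z,w)) = 1$. If instead $A'$ meets one of the subsets, surgering $A'$ has already split $C$, and one checks that each connected component of $c(D',z,w)$ is then a proper one-circle sub-configuration whose interlacement matrix has rank $0$ (equivalently, carries only coleaves), so by multiplicativity of the multiplicity over connected components $\mu(D',z,w) = 1$. Collecting the cases gives exactly the asserted equivalence.

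The main obstacle is the local geometric analysis in the last case of Part~1 --- that exactly two of the circles obtained by surgering a family of pairwise non-interlacing chords on the contractible circle $C$ can carry the unique $x_{+}$-label --- together with its counterpart in Part~2, that once $A'$ meets a family the one-circle components of the core contribute only trivial multiplicity. In both, the content is to track which circle inherits the label $x_{+}$ through the successive surgeries, and the homology classes of the circles created, for which the structural Propositions~\ref{prop:decorated_resolution_1} and~\ref{prop:decorated_resolution_2} supply the needed control.
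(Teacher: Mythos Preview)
Your approach to Part 1 is close to the paper's, though more algebraic: you extract the identity ``number of $x_+$-labels $=1-\tfrac12\operatorname{rank}M|_B$'' and then case-split on how many of $A_a,A_b,A_{a+b}$ the set $A'$ meets. The paper argues the ``exactly one subset'' case more directly: in $s_{A'}(D)$ a remaining arc from one of the \emph{other} families connects two specific circles, and the merge rule forces exactly one of these two to carry $x_+$ while all remaining circles carry $x_-$; hence two labelings. This replaces your ``local analysis'' (which you flag as the main obstacle) with a one-line observation about the surviving arc, and also makes clear \emph{which} two circles can carry the $x_+$. Either route is acceptable, but the paper's is shorter and avoids the homotopical-grading digression.

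There is, however, a genuine gap in your Part 2, in the case ``$A'$ meets one of the subsets''. Your claim that every connected component of $c(D',z,w)$ is a one-circle configuration whose interlacement matrix has rank $0$ is false in general. Take $A'=\{a_1\}\subset A_a$ and $A''=\{a_1,a_2,b_1\}$ with $a_2\in A_a$, $b_1\in A_b$. Then $D'=s_{a_1}(D)$ has two circles $Z_1,Z_2$, with $b_1$ (which interlaced $a_1$) now a merge arc joining $Z_1$ to $Z_2$, and $a_2$ a chord on one of them. The core $c(D',z,w)$ is a single connected component with \emph{two} circles and a merge arc --- neither one-circle nor ``only coleaves''. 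What actually forces $\mu(D',z,w)=1$ here is that once $w$ is fixed, the unique $x_+$-label sits on a specific circle, and every further surgery either leaves that circle untouched or splits/merges it in a way uniquely determined by the surviving $A_b$- or $A_{a+b}$-arc (exactly the mechanism the paper uses in Part 1). Your structural claim does not capture this, and multiplicativity over components cannot be invoked since the relevant component is not of the form you describe.

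The paper's own proof of Part 2 is admittedly terse --- it simply says the condition is equivalent to $(D',z,w)$ containing a resolution configuration of multiplicity $2$ and appeals to Part 1 --- but it does not make an incorrect structural assertion. To repair your argument, you should either (i) drop the component claim and argue directly that fixing $w$ pins down the $x_+$-circle through all intermediate surgeries when $A'\not\subset A_0$, or (ii) invoke the rank criterion of Proposition~\ref{prop:kcircle_multiplicity} for multi-circle configurations, which is designed exactly for this situation.
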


\begin{proof}
  (1) Let $A'\subset A(D)$. If $A'$ does not intersect with $A_a, A_b, A_{a+b}$ then it contains only coleaf arcs. The resolution configuration $s_{A'}(D)$ consists of circles among which only one contains a pair of linked arcs. Then the label of this circle in $s_{A'}(D)$ must be $x_+$ (otherwise the surgery by the pair of linked arcs gives zero) whereas the labels of the other circles must be $x_-$. Thus, the labels of the configuration $s_{A'}(D)$ are defined uniquely, and $\mu(s_{A'}(D))=1$.

  If $A'$ contains a pair arcs from different subsets $A_a, A_b, A_{a+b}$ then $rank M|_{A'}=2$. This means that the labels of all the circles in $s_{A'}(D)$ are $x_-$. Thus, $\mu(s_{A'}(D))=1$.

  If $A'$ contains chords from exactly one of subsets $A_a, A_b, A_{a+b}$ then the resolution configuration looks like in Fig.~\ref{fig:resolution_mult2}. There are two circles connected with an arc in $s_{A'}(D)$. Hence, the label of one of these circle must be $x_+$. The labels of the other circles are $x_-$. Hence, the multiplicity is $2$.

\begin{figure}[h]
\includegraphics[width=0.4\textwidth]{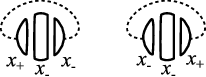}
\caption{{\bf Labelings of a resolution configuration of multiplicity $2$}\label{fig:resolution_mult2}}
\end{figure}

 (2) The condition of the second statement means the decorated configuration $(D',z,w)$ contains a resolution configuration of multiplicity $2$. Thus, it follows from the first statement of the proposition.

\end{proof}

Let us now consider resolution configurations with several circles in the initial state.

Let $(D,x,y)$ be a connected nonempty decorated resolution configuration with $k\ge 1$ circles in the initial state. Then there are $k-1$ arcs $\hat A=\{c_1,\dots,c_{k-1}\}\in A(D)$ such that the resolution $D'=s_{\hat A}(D)$ consists of one circle (Fig.~\ref{fig:decorated_many_circles}). In the diagram $D'$ we can draw the arc adjoint to the arcs of $\hat A$. With some abuse of notation, we denote the set of the adjoint arcs by $\hat A$. Then $D=s_{\hat A}(D')\cup\hat A$.

\begin{figure}[h]
\includegraphics[width=0.45\textwidth]{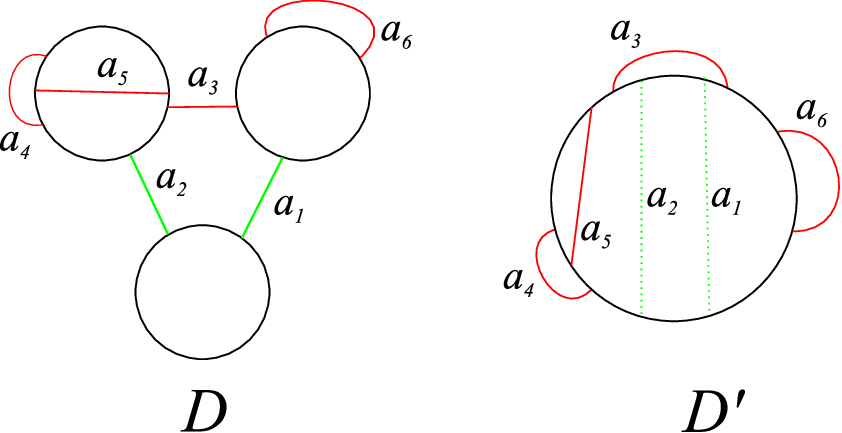}
\caption{Decorated resolution configurations $D$ and $D'$. The set $\hat A$ consists of green arcs}\label{fig:decorated_many_circles}
\end{figure}

Let $M$ be the interlacement matrix of  the chords $A(D')\cup \hat A\simeq(A(D)\setminus\hat A)\cup\hat A=A(D)$ on the circle $D'$. 
Then for any $A'\subset A(D)$ 
the number of circles \label{pageof}    
in the resolution configuration is equal to $corank M|_{A'\bigtriangleup\hat A}+1$ where $A'\bigtriangleup\hat A$ is the symmetric difference of the sets $A'$ and $\hat A$.

\begin{proposition}\label{prop:kcircle_multiplicity}
Let $(D,x,y)$ be a connected nonempty decorated resolution configuration such that $rank M|_{A(D)\setminus\hat A}=2$ and $A'\subset A(D)$. Let $M_{A'}$ be the submatrix of the interlacement matrix $M$ whose rows correspond to the subset $A'\setminus\hat A$ and columns correspond to the subset $A(D)\setminus(A'\cap \hat A)$, and $M^0_{A'}$ be the submatrix with rows from $A'\setminus\hat A$ and columns from $\hat A\setminus A'$. Then the resolution configuration $s_{A'}(D)$ has in $(D,x,y)$ multiplicity $2$ if and only if $rank M_{A'}-rank M^0_{A'}=1$, and multiplicity $1$ otherwise.
\end{proposition}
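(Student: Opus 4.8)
The plan is to imitate the one-circle argument of Proposition~\ref{prop:1circle_multiplicity}, carrying the extra ``tree'' arcs $\hat A$ along. Recall that $\mu_{(D,x,y)}(s_{A'}(D))$ is by definition the number of labelings $z$ of $Z(s_{A'}(D))$ fitting into a chain $(D,y)\prec(s_{A'}(D),z)\prec(s(D),x)$; since the quantum degree of comparable configurations agrees, $z$ has a prescribed total label $|z|=|y|-|A'|$, so counting the valid $z$ amounts to deciding which circles of $s_{A'}(D)$ carry a forced label and whether one ``free'' circle survives. First I would move everything onto the single circle $Z(D')$: under the bijection $A'\mapsto A'\bigtriangleup\hat A$ between subsets of $A(D)$ and subsets of the chord set of $D'$, the resolution $s_{A'}(D)$ has $\mathrm{corank}\,M|_{A'\bigtriangleup\hat A}+1$ circles, the tree chords $\hat A$ are pairwise non-interlaced, and by the hypothesis $\mathrm{rank}\,M|_{A(D)\setminus\hat A}=2$ together with Propositions~\ref{prop:decorated_resolution_1} and~\ref{prop:decorated_resolution_2} the non-tree chords split into three parallel families $A_a,A_b,A_{a+b}$ and a family $A_0$ of coleaves.

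Second, I would run the forcing argument. As in the one-circle case, a circle of $s_{A'}(D)$ must be labeled $x_+$ whenever it is incident to two interlaced arcs of the not-yet-surgered set $A(D)\setminus A'$, for otherwise surgering those two arcs on the way up to $(s(D),x)$ produces $m(x_-\otimes x_-)=0$; compatibility with the initial configuration $(D,y)$ below then pins down every remaining circle to be $x_-$. The only ambiguity that can occur is the ladybug one: two circles of $s_{A'}(D)$ joined by a single surviving arc, labeled $\{x_+,x_-\}$ in either order, all other circles carrying $x_-$. Hence $\mu_{(D,x,y)}(s_{A'}(D))\in\{1,2\}$, the value $2$ occurring exactly when one such joined pair is present; as in the one-circle case, the hypothesis $\mathrm{rank}\,M|_{A(D)\setminus\hat A}=2$ is what keeps two independent ambiguous pairs from coexisting.

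Third, I would translate ``one ambiguous joined pair is present'' into the rank equation. The matrix $M_{A'}$ records the interlacement of the surgered essential arcs $A'\setminus\hat A$ with all remaining arcs, while $M^0_{A'}$ records only their interlacement with the remaining tree arcs $\hat A\setminus A'$, so $\mathrm{rank}\,M_{A'}-\mathrm{rank}\,M^0_{A'}$ is the dimension by which the column space of the ``essential block'' (the columns indexed by $A(D)\setminus\hat A$) sticks out of that of the ``tree block''. Using the three-family structure, every column of the essential block indexed by $A_a$ (resp.\ $A_b$, $A_{a+b}$) is the indicator vector of those rows lying in the two complementary families, and the three such vectors sum to $0$ over $\Z_2$; hence the essential block has column rank $0$, $1$ or $2$, and equals $1$ exactly when $A'$ meets exactly one of the three families. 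Combining this with the circuit--nullity count for $s_{A'}(D)$, one checks that the sticking-out dimension is $1$ precisely in the case producing the single ladybug pair (when $A'$ meets exactly one essential family and the corresponding indicator is not already realized by the remaining tree arcs), and is $0$ or $2$ in the forced cases; the special case $\hat A=\emptyset$ recovers Proposition~\ref{prop:1circle_multiplicity}(1).

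I expect the main obstacle to be exactly this bookkeeping around the tree arcs: keeping straight the flip $A'\leftrightarrow A'\bigtriangleup\hat A$ in the circle model, checking that the ``below'' constraints from $(D,y)$ and the ``above'' constraints from $(s(D),x)$ cut out the same set of labelings once the $k$ circles have been reconnected along $\hat A$, and running the case analysis over how many and which of $A_a,A_b,A_{a+b}$ are met by $A'\setminus\hat A$ versus by $\hat A$, while taking into account the inner/outer dichotomy of Proposition~\ref{prop:decorated_resolution_2} and whether the ambient surface is the torus or of higher genus. Away from this bookkeeping the argument is the same short forcing computation as in the one-circle case.
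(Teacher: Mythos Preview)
Your plan is viable but takes a different route from the paper. You try to run the one-circle forcing argument directly on $D'$ while dragging the tree chords $\hat A$ through every step, and you correctly anticipate that the bookkeeping in your step~3 (matching the ``sticking-out dimension'' $\mathrm{rank}\,M_{A'}-\mathrm{rank}\,M^0_{A'}$ against the presence of a single ambiguous pair, while tracking how the remaining tree columns can absorb the essential indicator vector) is where all the work hides.

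The paper sidesteps exactly that bookkeeping with one extra idea you do not use: it \emph{changes the spanning tree}. First it treats the special case $\mathrm{rank}\,M^0_{A'}=0$. There, after surgering the tree arcs in $A'$ one gets $\tilde D=s_{\hat A\cap A'}(D)$ with $|\hat A\setminus A'|+1$ circles, and the vanishing of $M^0_{A'}$ forces $\tilde D$ to split into that many \emph{disjoint} components; one then argues (via a quick circle count) that chords in distinct components are non-interlaced and that at most one component can contain an interlaced pair, so the whole multiplicity question localizes to a single one-circle component and Proposition~\ref{prop:1circle_multiplicity} applies verbatim. For the general case $\mathrm{rank}\,M^0_{A'}>0$, the paper picks $A''\subset A'$ and $\hat A''\subset\hat A\setminus A'$ of size $\mathrm{rank}\,M^0_{A'}$ and replaces $\hat A$ by the new tree $\hat A_1=(\hat A\setminus\hat A'')\cup A''$; it then checks that both the circle count $\mathrm{corank}\,M|_{A'\bigtriangleup\hat A}$ and the rank difference $\mathrm{rank}\,M_{A'}-\mathrm{rank}\,M^0_{A'}$ are unchanged under this swap, while the new $\mathrm{rank}\,(M')^0_{A'}$ drops to $0$. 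So the general case reduces to the special one.

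In short: your direct approach and the paper's agree on the endgame (reduce to the one-circle ladybug dichotomy), but the paper buys its way out of your step-3 case analysis with the spanning-tree swap, which makes the rank difference an invariant and lets the one-circle proposition do all the remaining work. If you want to complete your version, the place to be careful is precisely where a tree column in $M^0_{A'}$ already realizes the essential indicator vector; the paper's tree swap is the clean way to eliminate that interaction rather than chase it through cases.
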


\h{\bf Remark.}  \label{pagechiga}  
$M|_{A'}$, $M_{A'}$ and $M_{A'}^0$ are different notations.

\bb
\begin{proof}
  The matrix $M_{A'}$ includes $M_{A'}^0$, so $rk M_{A'}-rk M^0_{A'}\ge 0$.

  Let $\tilde D=s_{\hat A\cap A'}(D)$ be the resolution configuration obtained from $D$ by surgery along arcs in $\hat A$. The resolution configuration $\tilde D$ has $|\hat A\setminus A'|+1$ circles with labels $x_+$.

  Assume that $rk M^0_{A'}=0$. This means that the resolution configuration $\tilde D$ consists of $|\hat A\setminus A'|+1$ distinct components.

  The chords from different components are not interlaced, i.e. the coefficient correspondent to them in the matrix $M_{A'}$ is zero. Indeed, let $c$ and $c'$ belong to different components. Then the resolution configuration $s_{\{c,c'\}}(\tilde D)$ has $|\hat A\setminus A'|+3$ circles, and the resolution configuration $s_{\{c,c'\}\cup (\hat A\setminus A')}(\tilde D)$ has $|\hat A\setminus A'|+3-|\hat A\setminus A'|=3$ circles. But the number of circles is equal to $corank M(\{c,c'\})+1$. Hence, $corank M(\{c,c'\})=2$ and $M(\{c,c'\})=0$, so $m_{cc'}=0$.

  Among the components of $\tilde D$ only one can include interlaced chords. Indeed, if we have pairs $c_1,c_1'$ and $c_2,c_2'$ of interlaced chords from different component then the surgery $s_{\{c_1,c_1',c_2,c_2'\}\cup (\hat A\setminus A')}(\tilde D)$ would give a resolution configuration with one circle and a label of degree $-2$, but $\deg(x_-)=-1$ is the minimal grading. Thus, the label is zero and the decorated resolution configuration $(D,x,y)$ is empty.

  Thus, we have only one component with interlaced chord. For the matrix $M(A')$ this means that its nonzero elements can correspond only to the distinguished component. Thus, $rk M_{A'}$ is equal to the rank of the submatrix which corresponds to the component with interlaced chords. Then the statement of the proposition follows from Proposition~\ref{prop:1circle_multiplicity}.

  If $rk M^0_{A'}>0$ then there are chords in $\tilde D$ that connect different circles. Then we can chose subsets $A''\subset A'$ and $\hat A''\subset \hat A\setminus A'$ such that $|A''|=|\hat A''|=rank M^0_{A'}$ and surgery along the set $\hat A\setminus (A'\cup \hat A'')\cup A''$ transforms the resolution configuration $\tilde D$ to a configuration with one circle. Let $\hat A_1=(\hat A\setminus\hat A'')\cup A''$ and $M'$ be the interlacement matrix on the circle $D_1=s_{\hat A_1}(D)$. Then one can check that $corank M|_{A'\bigtriangleup\hat A}=corank M'|_{A'\bigtriangleup\hat A_1}$,  $rank M_{A'}-rank M^0_{A'}=rank M'_{A'}-rank (M')^0_{A'}$ and $rank (M')^0_{A'}=0$. Thus, the statement of the proposition reduces to the case considered above.

\end{proof}

\begin{example}\label{exDyaro}  
Consider the resolution configuration $D$ in Fig. \ref{fig:decorated_many_circles}. 
With labels $x_+$ on each circle, it defines an initial labeled resolution configuration of a decorated resolution configuration. One can choose the set $\hat A=\{a_1,a_2\}$ to merge the circles of the configuration. The interlacement matrix then is equal to
$$
M=\left(\begin{array}{cccccc}
0 & 0 & 1 & 0 & 0 & 0\\
0 & 0 & 1 & 0 & 0 & 0\\
1 & 1 & 0 & 0 & 0 & 0\\
0 & 0 & 0 & 0 & 1 & 0\\
0 & 0 & 0 & 1 & 0 & 0\\
0 & 0 & 0 & 0 & 0 & 0
\end{array}\right).
$$

Consider the surgery along the subset $A'=\{a_1,a_3,a_5,a_6\}$. 

$$
M_{A'}=\left(\begin{array}{c|ccc}
1 & 0 & 0 & 0 \\
0 & 0 & 1 & 0 \\
0 & 0 & 0 & 0
\end{array}\right)
$$
where the left part is the submatrix $M^0_{A'}$. Then $rank M_{A'}=2$ and $rank M^0_{A'}=1$. Since $rank M_{A'}-rank M^0_{A'}=1$, the multiplicity of the resolution configuration $s_{A'}(D)$ is equal to $2$. The labelings of the resolution configuration $s_{A'}(D)$ are shown in Fig.~\ref{fig:decorated_many_circles_surgery}.

\begin{figure}[h]
\includegraphics[width=0.7\textwidth]{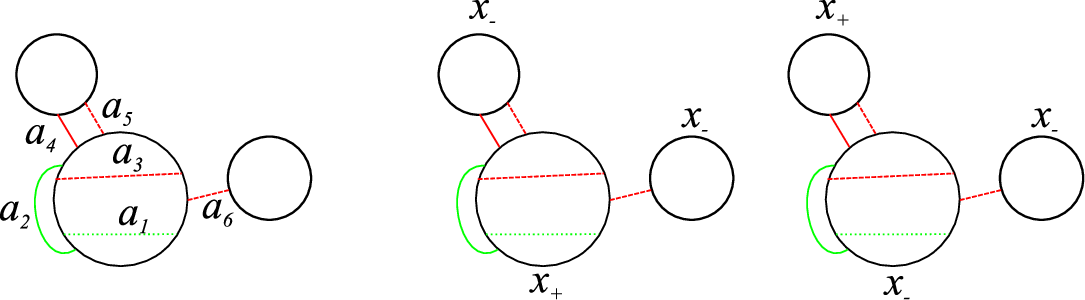}
\caption{Resolution configuration $s_{\{a_1,a_3,a_5,a_6\}}(D)$ and its labelings}\label{fig:decorated_many_circles_surgery}
\end{figure}

\end{example}

Let us enumerate the isotopy classes of connected decorated resolution configuration of multiplicity $>1$.

\subsection{Decorated resolution configurations of index $1$}

All nonempty decorated resolution configurations consist of a comparable pair of labeled resolution configurations and have multiplicity $1$.

\subsection{Decorated resolution configurations of index $2$}

According to Propositions~\ref{prop:1circle_multiplicity} and~\ref{prop:kcircle_multiplicity} the multiplicity of a decorated resolution configuration $(D, x, y)$ with two arcs is equal to $1$ except three cases (see Fig.~\ref{fig:ladybug_cases}).


If $F=S^2$ then there is a unique up to isomorphism decorated resolution configuration $L_0$ of multiplicity $2$. 
When $F\ne T^2, S^2$, 
the decorated resolution configurations of multiplicity $2$ are the ladybug resolution configuration of type $L_\alpha$ where $\alpha$ is a homotopy class of a nontrivial simple loop in $F$. In the torus case $F=T^2$, there is a two-parameter series of decorated resolution configurations of multiplicity $2$. The parameters are two homology classes $\alpha$ and $\beta$ in $H_1(T^2)$.

    \begin{figure}[h]
\centering\includegraphics[width=0.6\textwidth]{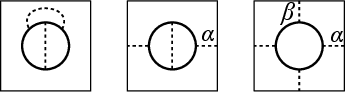}
\caption{Ladybug configurations of type $L_0$ (left) and $L_\alpha$ (middle), and a quasi-ladybug configuration of type $Q_{\alpha,\beta}$ (right). The labels of the circles are $x_+$}\label{fig:ladybug_cases}
\end{figure}

Let us consider these configuration more attentively.

\subsubsection{\bf The ladybug configuration for link diagrams in $S^2$}\label{tamago}

\noindent
We review the ladybug configuration for link diagrams  in $S^2$,
which is introduced in \cite[section 5.4]{LSk}.
Lipshitz and Sarkar introduced it in the case of link diagrams in $S^2$.
We cite the definition of it,
that of the right pair, and that of the left pair associated with it
from \cite[section 5.4.2]{LSk}. \\

\begin{definition}\label{teten}  {\bf (\cite[Definition 5.6]{LSk}).}
An index 2 basic resolution configuration $D$ in $S^2$
is said to be a ladybug configuration
if the following conditions are satisfied (See Figure \ref{tento}.).

$\bullet$ $Z(D)$ consists of a single circle, which we will abbreviate as $Z$;

$\bullet$ The endpoints of the two arcs in $A(D)$, say $A_1$ and $A_2$,
alternate around $Z$

\hskip3mm (that is, $\partial A_1$ and $\partial A_2$ are linked in $Z$).\\

\end{definition}

\begin{definition}\label{rl}  {\bf (\cite[section 5.4.2]{LSk}).}
Let $D$ be as above.
Let $Z$ denote the unique circle in $Z(D)$.
The surgery $s_{A_1}(D)$ (respectively,  $s_{A_2}(D)$) consists of two circles;
denote these $Z_{1,1}$ and $Z_{1,2}$ (respectively, $Z_{2,1}$ and $Z_{2,2}$);
that is, $Z(s_{A_i}(D)) = \{Z_{i,1}, Z_{i,2}\}$.

As an intermediate step, we distinguish two of the four arcs in
$Z - (\partial A_1\cup \partial A_2)$.
Assume that the point $\infty\in S^2$ is not in $D$,
and view $D$ as lying in the plane $S^2-\{\infty\}\cong\R^2$.
Then one of $A_1$ or $A_2$ lies outside $Z$ (in the plane)
while the other lies inside $Z$.
Let $A_i$ be the inside arc and $A_o$ the outside arc.
The circle $Z$ inherits an orientation from the disk it bounds in $\R^2$.
With respect to this orientation, each component of
$Z - (\partial A_1\cup\partial A_2)$
either runs from the outside arc $A_o$ to an inside arc $A_i$ or vice-versa.
The {\it right pair} is the pair of components of $Z-(\partial A_1\cup\partial A_2)$
which run from the outside arc $A_o$ to the inside arc $A_i$.
The other pair of components is the {\it left pair}. See \cite[Figure 5.1]{LSk}.
\end{definition}

We explain why the ladybug configuration is important, below.

\begin{proposition}\label{4}
Let ${\bf x}$ $($respectively, ${\bf y})$ be a labeled resolution configuration in $S^2$
of
homological grading $n$ $($respectively, $n+2).$
 Then the cardinality of the set

\hskip3mm $\{p| p$ is a labeled resolution configuration.
${\bf x}\prec p, p\prec{\bf y},$ $p\neq{\bf x}$, $p\neq{\bf y}\}$

\noindent
is 0, 2, or 4, where $\prec$ represents the partial order defined in
\cite[Definition 2.10]{LSk}.
  \end{proposition}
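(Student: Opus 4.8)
The plan is to analyse the poset $P$ between $\mathbf{x}$ and $\mathbf{y}$ purely combinatorially, using that $\mathbf{y}$ is obtained from $\mathbf{x}$ by surgering along exactly two arcs. First I would reduce to the core: deleting the circles of $Z(D_L(u))$ that are disjoint from the two relevant arcs does not change the set of intermediate $p$ (their labels are forced to agree with those of $\mathbf{x}$ and $\mathbf{y}$ by conditions (2),(3) of Definition \ref{2.10}), so I may assume the underlying resolution configuration is a basic index-2 decorated resolution configuration $(D,x',y')$, and the cardinality in question is exactly $\mu_{(D,x',y')}(s_{A_1}(D))+\mu_{(D,x',y')}(s_{A_2}(D))$ once we check each intermediate $p$ lies over one of the two codimension-one faces of the square $\{0,1\}^2$. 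Actually, more precisely: every $p$ with $\mathbf{x}\prec p\prec\mathbf{y}$, $p\ne\mathbf{x},\mathbf{y}$, projects under $\pi$ to one of the two vertices $(1,0),(0,1)$ of the cube $C_2$, and the count is $|\pi^{-1}(1,0)|+|\pi^{-1}(0,1)|$.

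Next I would enumerate the possibilities for a basic index-2 decorated resolution configuration in $S^2$. By the circuit–nullity considerations and Propositions \ref{prop:1circle_multiplicity}, \ref{prop:kcircle_multiplicity} specialised to $F=S^2$ (so all circles are contractible and the only surviving multiplicity-$2$ case is the ladybug configuration $L_0$), there are only finitely many isotopy types: either $\mu(D,x',y')=1$, in which case each of $s_{A_1}(D)$, $s_{A_2}(D)$ carries a unique compatible labelling or none, giving total count $0$ or $2$; or $(D,x',y')$ is a ladybug configuration, in which case $s_{A_1}(D)$ and $s_{A_2}(D)$ each consist of two contractible circles and admit exactly two compatible labellings each, for a total of $4$. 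One must also dispatch the disconnected case, where the index-2 configuration splits into two index-1 pieces, by the multiplicativity of $\mu$ (the Proposition before \S\ref{sect:decorated} on components), giving $\mu=1$ again. Assembling: the only possible values of $|\pi^{-1}(1,0)|+|\pi^{-1}(0,1)|$ are $0$, $2$, and $4$.

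The main obstacle, I expect, is not any single hard computation but making the case analysis genuinely exhaustive and checking that in the multiplicity-one situations one really gets $0$ or $2$ and never, say, $1$ or $3$: this amounts to observing that for a single arc $A_i$ the map $s_{A_i}$ together with the quantum- and homotopical-grading constraints forces the labelling of $s_{A_i}(D)$ either uniquely or not at all, and moreover that $s_{A_1}(D)$ admits a compatible labelling if and only if $s_{A_2}(D)$ does (so the two contributions are $0$ together or $2$ together), which follows by going through Figures \ref{resolC} and \ref{resolNC} case by case. The ladybug case is where the count jumps to $4$, and it is precisely the failure of the differential-squared-zero sign bookkeeping to be handled by a naive cube structure there — this is the phenomenon that motivates the ladybug matching of \cite{LSk} — so I would simply cite Definitions \ref{teten}, \ref{rl} for the description and note that both $s_{A_1}(D)$ and $s_{A_2}(D)$ being two-circle configurations with all-$x_-$ or split labels yields exactly $2+2=4$.
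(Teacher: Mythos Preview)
The paper does not supply its own proof of this proposition: it is stated as a known fact, with the partial order taken from \cite[Definition~2.10]{LSk} and the surrounding discussion (including Fact~\ref{tenten}) referred back to \cite[Section~5.4]{LSk}. So there is no ``paper's proof'' to compare against; you are filling in a gap the authors deliberately left to the reference.

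Your outline is correct and would produce a complete proof. The reduction to the core is right, the identification of the count with $|\pi^{-1}(1,0)|+|\pi^{-1}(0,1)|$ is right, and the classification of basic index-$2$ configurations in $S^2$ (disconnected, one circle with unlinked arcs, ladybug, two circles joined by arcs, etc.) is finite and tractable. One simplification you might adopt for the ``never $1$ or $3$'' step: rather than walking through Figures~\ref{resolC}--\ref{resolNC}, observe that for fixed cube vertices $u\prec w_1,w_2\prec v$ the sign $(-1)^{s_0}$ in \eqref{bibun} depends only on the cube edge, and the two $2$-step paths $u\to w_1\to v$ and $u\to w_2\to v$ carry opposite signs. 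Since $\delta^2=0$, the signed count of intermediate labelled configurations vanishes, hence $|\pi^{-1}(w_1)|=|\pi^{-1}(w_2)|$ and the total is automatically even. Combined with the bound $|\pi^{-1}(w_i)|\le 2$ (which is where the ladybug case enters, and is all that your case analysis really needs to supply), this gives $0$, $2$, or $4$ without a separate symmetry check.
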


Let $D$ be the ladybug configuration  in $S^2$.
Since each of $D$ and $s(D)$ has only one circle,
we can let $x_+$ or $x_-$ denote a labeling on it.
Give $D$ (respectively, $s(D)$) a labeling $x_+$ (respectively, $x_-$).
We call the resultant labeled resolution configuration
 $(D,x_+)$ (respectively, $(s(D),x_-)$).
We obtain a decorated resolution configuration $(D,x_-,x_+)$
as drawn in Figure \ref{uenp}.

\begin{fact}\label{tenten}
The case of 4 in Proposition \ref{4} occurs in the above case $(D,x_-,x_+)$.
\end{fact}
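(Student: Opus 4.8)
The plan is to read off the poset $P(D,x_-,x_+)$ from the combinatorics of the ladybug configuration together with Definition~\ref{2.10}, and to exhibit exactly four labeled resolution configurations strictly between $\mathbf x=(D,x_+)$ and $\mathbf y=(s(D),x_-)$.

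First I would describe the resolution cube of the index $2$ configuration $D$. Its circle set is the single circle $Z$, and the endpoints of the two arcs $A_1,A_2$ alternate around $Z$. Hence $s_{A_1}(D)$ has two circles $Z_{1,1},Z_{1,2}$ and $s_{A_2}(D)$ has two circles $Z_{2,1},Z_{2,2}$, and, since the two arcs are interlaced, in each case the remaining arc joins those two circles, so $s(D)$ has a single circle. Any $p$ with $\mathbf x\prec p\prec\mathbf y$ and $p\neq\mathbf x,\mathbf y$ must, after transitivity, lie over a vertex of this cube strictly between $\bar 0$ (the vertex of $D$) and $\bar 1$ (the vertex of $s(D)$); by condition~(1) of Definition~\ref{2.10} there is no nontrivial single-arc relation from $D$ to $D$ or from $s(D)$ to $s(D)$, so $p$ lies over $s_{A_1}(D)$ or over $s_{A_2}(D)$.

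Next I would use condition~(3) of Definition~\ref{2.10} to list the admissible labelings. Since all circles in $S^2$ are contractible, the homotopical grading condition is automatic, and only the quantum grading~\eqref{eq:quantum_grading} constrains things. Passing up the order from $(D,x_+)$ to $s_{A_i}(D)$ raises the weight of the resolution vector by $1$ and splits the circle $Z$, which carries $x_+$, into two circles; comparing quantum gradings shows this is order-preserving precisely when the two new circles receive the opposite labels $x_+$ and $x_-$, and not when they both receive $x_+$ or both receive $x_-$. (The label-agreement condition~(2) is vacuous here, since $D$ and $s_{A_i}(D)$ have no circle in common.) This yields two labelings over each of $s_{A_1}(D)$ and $s_{A_2}(D)$, hence four configurations in total. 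Each of these four also satisfies $p\prec(s(D),x_-)$: merging a circle labeled $x_+$ with one labeled $x_-$ produces a single circle, and preservation of the quantum grading forces its label to be $x_-$, which is exactly the final labeling of $\mathbf y$. The four configurations are pairwise distinct --- they sit over different resolutions or carry different labelings --- and none equals $\mathbf x$ or $\mathbf y$.

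Therefore the interior of $P(D,x_-,x_+)$ has exactly four elements, namely the ``ladybug'' labeled resolution configurations $\bigl(s_{A_i}(D),z\bigr)$ with $i\in\{1,2\}$ and $z$ one of the two labelings placing $x_+$ on one of $Z_{i,1},Z_{i,2}$ and $x_-$ on the other; this is the case of $4$ in Proposition~\ref{4} realized by $(D,x_-,x_+)$. The one point that demands care is the bookkeeping of the transitive closure of $\prec$: one must confirm that the only vertices contributing interior elements are the two middle vertices, and that the quantum-grading computation really does exclude the labelings $(x_+,x_+)$ and $(x_-,x_-)$ over $s_{A_i}(D)$ as incomparable with $\mathbf y$. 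Everything else is a direct check using formula~\eqref{eq:quantum_grading}.
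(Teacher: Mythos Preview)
Your proof is correct and matches the paper's treatment: the paper simply exhibits the poset $P(D,x_-,x_+)$ in Figure~\ref{uenp} (and refers to \cite[\S5.4]{LSk}), and your argument is precisely the direct enumeration of that poset via the quantum-grading constraint. The only difference is that you spell out the grading check in words rather than pointing to a picture.
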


Fact \ref{tenten}  is also explained in \cite[section 5.4]{LSk}.

\begin{figure}
\includegraphics[width=0.16\textwidth]{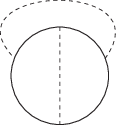}
\caption{{\bf The ladybug configuration
}\label{tento}}
\end{figure}

\begin{figure}
\centering\includegraphics[width=0.65\textwidth]{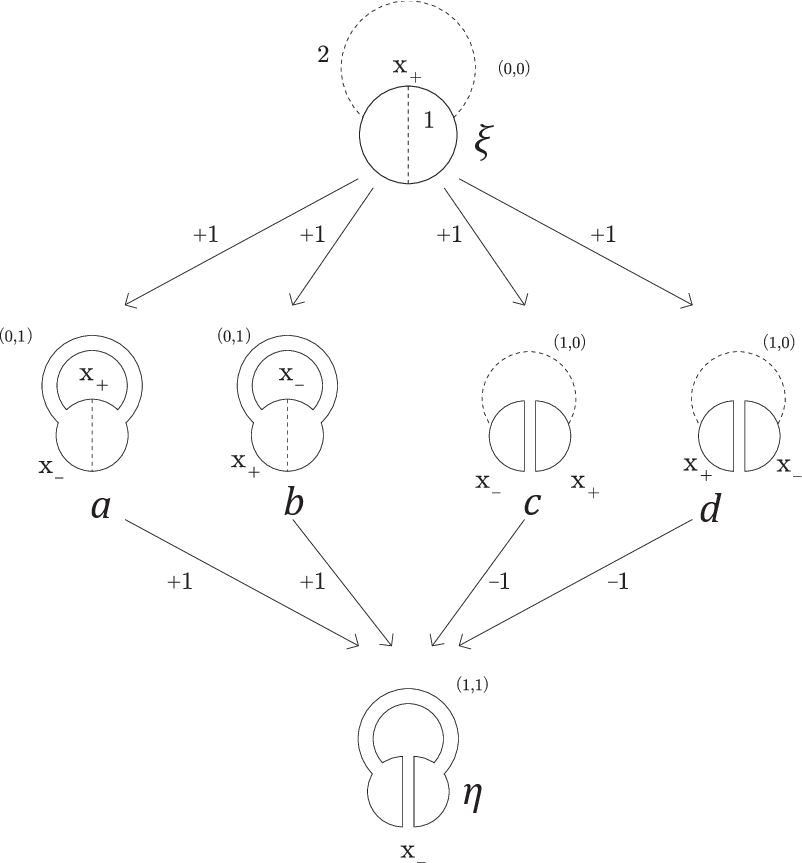}
\caption{
                 {\bf  The poset for
                 the decorated resolution configuration associated with
                                       a ladybug  configuration in $S^2$
                  }\label{uenp}
             }
\end{figure}

\bb
\subsection{Ladybug 
and quasi-ladybug configurations for link diagrams in surfaces}\label{hiyoko}
\bb

\begin{definition}({\bf\cite{KauffmanNikonovOgasa}})\label{LQ}
Let $D$ be a resolution configuration
which is made of one circle and two m-arcs (multiplication arc).  \\

Stand at a point in the circle where you see an arc to your right. 
Go ahead along the circle. Go around one time.
Assume that you encounter the following pattern:
 In the order of travel you next touch the other arc.
Then you touch the  first arc.
Then you touch the other arc again.
Finally, you come back to the point at the beginning. \\

Since both arcs are m-arcs,
both satisfy the following property:
At both endpoints of each arc,
you see the arc in the same side -- either on
the right hand side and on the left hand side.\\

If you see the arcs
both in the right hand side
and in the left hand side
(respectively, only in the right 
hand side)
while you go around one time,
we call $D$ a {\it ladybug configuration}
(respectively,
 {\it quasi-ladybug configuration}).

If $F$ is the 2-sphere,
our definition of ladybug configurations
is the same as
that
in in \S\ref{tamago}. 

\bb
Let $D$ be
a ladybug (respectively, quasi-ladybug) configuration.
Then $Z(D)$ have only one circle and $A(D)$ have only two arcs.
Make $s(D)$.
Give $D$ (respectively, $s(D)$) a labeling $x$  (respectively, $y$).
We call the decorated resolution configuration $(D,y,x)$
a {\it decorated resolution configuration associated with
the ladybug $($respectively, quasi-ladybug$)$ configuration $D$}.
\end{definition}

 Note that  $(D,y,x)$ may be empty 
 as explained below.

Since each of $D$ and $s(D)$ has only one circle,
we can let $x_+$ or $x_-$ denote $x$ (respectively, $y$).

See Figure \ref{quasiT2}.

The partial order defined in
\cite[Definition 2.10]{LSk} is defined in the case of link diagrams in $S^2$.
The authors \cite{KauffmanNikonovOgasa} generalized it to the case of links in thickened surfaces.

\begin{figure}
\includegraphics[width=0.7\textwidth]{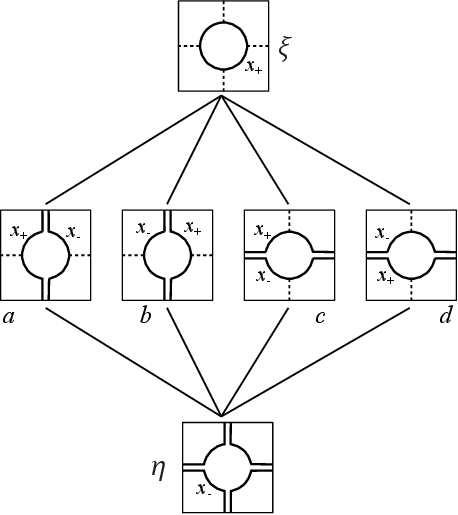}
\caption{{\bf
The poset for
a decorated resolution configuration $(D,x_-,x_+)$ associated with
a quasi-ladybug configuration on $T^2$:
We envelope $T^2$ along two circles as usual,
and draw six labeled resolution configurations.
Here, we have $[\xi;a]\cdot[a;\eta]=[\xi;b]\cdot[b;\eta]=-[\xi;c]\cdot[c;\eta]=-[\xi;d]\cdot[d;\eta].$
}\label{quasiT2}}
\end{figure}

\begin{proposition}{\bf(\cite{KauffmanNikonovOgasa})}\label{daijida}
\h$(1)$
Let $D$ be  a quasi-ladybug configuration  in a surface $F$.
Assume that the only one circle in $D$ is contractible.
Let $F$ be the torus.
Then there is a non-vacuous decorated resolution configuration
$(D, x_-,x_+)$ associated with $D$.

\bs
\h$(2)$
Let $D$ be  a quasi-ladybug configuration  in a surface $F$.
Assume that the only one circle in $D$ is contractible.
Let $(D,y,x)$ be a decorated resolution configuration  associated with $D$.
Assume that  the genus of $F$ is greater than one.
Then $(D,y,x)$ is empty 
for arbitrary $x$ and $y$.

\bs
\h$(3)$
 Let $D$ be a ladybug $($respectively, quasi-ladybug$)$ configuration in a surface $F$.
 Let $(D,y,x)$ be a decorated resolution configuration  associated with $D$.
Assume that the only one circle in $D$ is non-contractible.
Then $(D,y,x)$ is empty 
for arbitrary $x$ and $y$.

\bs
\h$(4)$
Let $F$ be an arbitrary surface.
There is a ladybug configuration $D$ in $F$
such that
a decorated resolution configuration $(D,y,x)$ associated with $D$
is non-empty. 
\end{proposition}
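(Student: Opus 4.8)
The plan is to verify the four assertions of Proposition~\ref{daijida} by combining the structural analysis of one-circle decorated resolution configurations carried out in Section~\ref{sect:decorated} (Propositions~\ref{prop:decorated_resolution_1} and~\ref{prop:decorated_resolution_2}) with direct computations of the homotopical grading $\gr_{\mathfrak H}$. In each case $D$ has a single circle $Z$ and exactly two m-arcs $A_1,A_2$ whose endpoints alternate on $Z$, so the interlacement matrix is $M=\left(\begin{smallmatrix}0&1\\1&0\end{smallmatrix}\right)$, which has rank $2$; hence the entire combinatorial classification of Propositions~\ref{prop:decorated_resolution_1}--\ref{prop:decorated_resolution_2} applies, and the circle $Z$ of $D$ is forced to be contractible whenever the configuration is non-empty. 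That observation is what immediately yields $(3)$: if $Z$ is non-contractible then $\gr_{\mathfrak H}(D,x)=\pm[Z]\neq 0$ (neither homotopy class is $\bigcirc$), while $\gr_{\mathfrak H}(s(D),y)$ records only contractible circles for a ladybug or, in the quasi-ladybug case, contributes a class that cannot match $\pm[Z]$; since comparability requires equal homotopical grading, $P(D,x,y)=\emptyset$ and the decorated configuration is empty for every $x,y$.

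For $(1)$ and $(2)$ I would work out explicitly, as in Figure~\ref{quasiT2}, the six candidate labeled resolution configurations lying (a priori) between $(D,x_+)$ and $(s(D),x_-)$. Performing the two single-arc surgeries $s_{A_1}(D)$ and $s_{A_2}(D)$ each produces two circles; in the quasi-ladybug case both arcs are seen on the same side, so a short local picture-chase shows the two resulting circles are, homotopically, $[\xi]$ and $[\xi]^{-1}$-type partners whose classes in $\mathfrak H$ cancel only if the ambient surface permits it. The key point is the intersection-form computation: gluing discs along the two arcs reconstructs a neighbourhood whose Euler characteristic pins down the genus. On $T^2$ the relevant classes satisfy exactly the relation $[\xi;a]\cdot[a;\eta]=[\xi;b]\cdot[b;\eta]=-[\xi;c]\cdot[c;\eta]=-[\xi;d]\cdot[d;\eta]$ displayed in Figure~\ref{quasiT2}, so the homotopical and quantum gradings line up and all six intermediate states genuinely belong to $P(D,x_-,x_+)$, giving $(1)$. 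For genus $>1$, the same computation forces one of the intermediate circles to carry a non-trivial, non-cancelling class in $\mathfrak H$ — there is simply no homology relation available to make the gradings agree — so no intermediate state survives, the final labeling $x$ is incompatible with the initial labeling $y$ through any chain, and $P(D,x,y)=\emptyset$ for all $x,y$; this is $(2)$. (This last step is essentially the genus obstruction already exploited in~\cite{KauffmanNikonovOgasa}; I would cite that computation rather than redo it.)

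Finally, $(4)$ is a pure existence statement and I would settle it by exhibiting, on any closed oriented surface $F$, a ladybug configuration $D$ whose single circle $Z$ is contractible and bounds an embedded disc containing both arcs in its interior (take the standard $S^2$ ladybug of Figure~\ref{tento}, placed inside a small disc in $F$). Then $s(D)$ has a single contractible circle as well, and the decorated configuration $(D,x_-,x_+)$ is literally the one depicted in Figure~\ref{uenp}; its poset contains the four intermediate states of Fact~\ref{tenten}, so it is non-empty. Since the local model is supported in a disc, nothing about $F$ obstructs it.

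The main obstacle is the genus bookkeeping in $(1)$ and $(2)$: one must be careful that the six labeled configurations drawn in Figure~\ref{quasiT2} really are the only candidates, and that the sign relation among the $[\xi;\cdot]\cdot[\cdot;\eta]$ products is correctly derived from the orientation of the glued discs — getting a sign wrong would either spuriously empty the torus case or spuriously populate the higher-genus case. Everything else is either the rank-$2$ classification already in hand or a routine grading comparison.
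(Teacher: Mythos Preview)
The paper gives no proof of this proposition: it is imported wholesale from the earlier paper \cite{KauffmanNikonovOgasa}, so there is nothing here to compare against directly. Your strategy of deriving the statement from Propositions~\ref{prop:decorated_resolution_1} and~\ref{prop:decorated_resolution_2} (which are proved in the present paper, logically prior to Proposition~\ref{daijida}, and do not depend on it) is sound and in fact cleaner than deferring to the citation. Part~(3) is exactly the contrapositive of Proposition~\ref{prop:decorated_resolution_1}(1); part~(2) follows from Proposition~\ref{prop:decorated_resolution_2}(2) once you observe that in a quasi-ladybug with contractible circle the two interlaced arcs must both lie in the non-disc component (two disjoint interlaced arcs cannot sit in a disc); part~(1) is the explicit torus check (all six states have homotopical grading $0$ because the intermediate pair of circles are homotopic and carry opposite labels, and $s(D)$ is again contractible); and your construction for~(4) is correct.

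Two small corrections to your write-up. First, in your direct grading argument for~(3) you assert that for a ladybug ``$\gr_{\mathfrak H}(s(D),y)$ records only contractible circles''; this is not true in general when $Z$ is non-contractible, and you do not need it --- the contrapositive of Proposition~\ref{prop:decorated_resolution_1}(1) already finishes the job, so drop the extra sentence. Second, the relation $[\xi;a]\cdot[a;\eta]=[\xi;b]\cdot[b;\eta]=-[\xi;c]\cdot[c;\eta]=-[\xi;d]\cdot[d;\eta]$ in the caption of Figure~\ref{quasiT2} records the signs appearing in the Khovanov differential, not a homotopy-class identity; it is not the ``key point'' for~(1). What you actually need for~(1) is that on $T^2$ the single circle of $s(D)$ bounds the complementary disc (hence is contractible) and that each single-arc surgery produces two homotopic non-trivial circles, so the $\pm$ labels cancel in $\gr_{\mathfrak H}$. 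Once those two remarks are fixed, your argument is complete.
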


\subsection{Decorated resolution configurations of index $3$}

All resolution configurations with three arcs are made from
graphs in Figure \ref{fig:cases3}.

{\normalsize
\begin{figure}
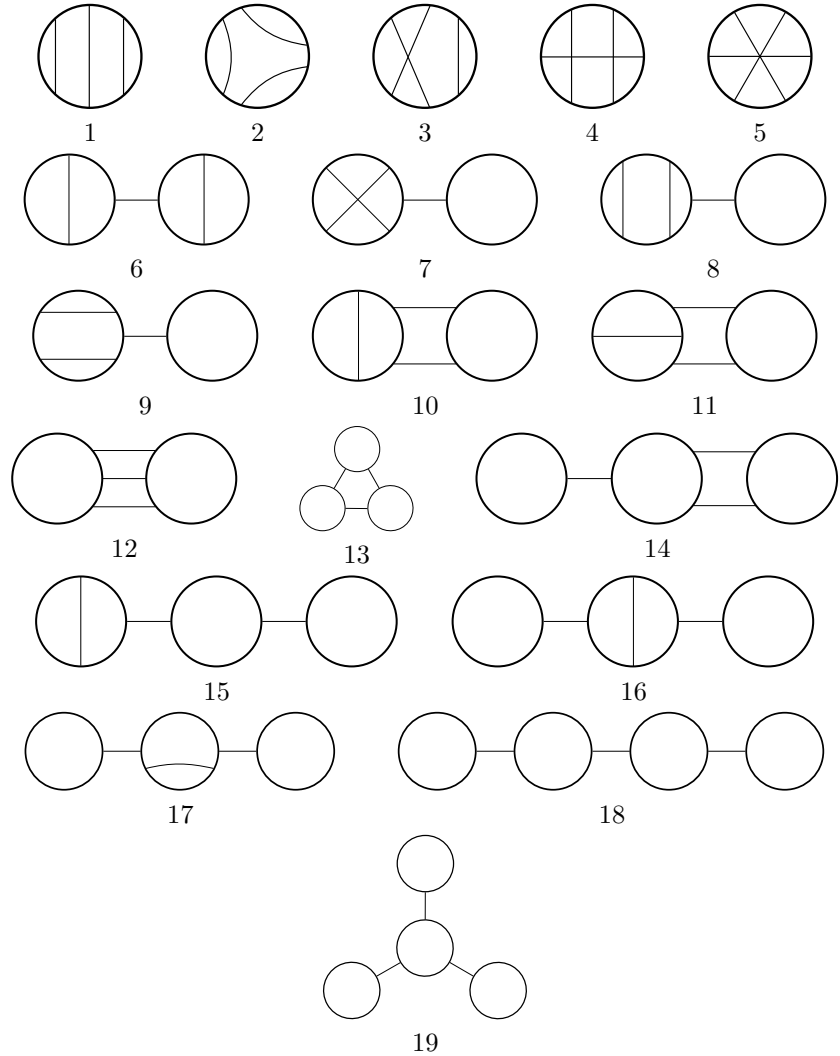

\centering
\targ{4}{chord3_1}{1} \quad \targ{4}{chord3_2}{2} \quad \targ{4}{chord3_3}{3} \quad \targ{4}{chord3_4}{4} \quad \targ{4}{chord3_5}{5} \\
\targ{3.5}{chord3_6}{6} \quad \targ{3.5}{chord3_7}{7} \quad \targ{3.5}{chord3_8}{8} \\
\targ{3.5}{chord3_9}{9}\quad \targ{3.5}{chord3_10}{10}\quad \targ{3.5}{chord3_11}{11} \\
\targ{3.5}{chord3_12}{12} \quad \targ{4}{chord3_13}{13} \quad \targ{3.5}{chord3_14}{14} \\
\targ{3.5}{chord3_15}{15}\quad \targ{3.5}{chord3_16}{16} \\ \targ{3}{chord3_17}{17} \quad \targ{3}{chord3_18}{18} \\ \targ{7}{chord3_19}{19}
\caption{{\bf
Connected graphs of the resolution configurations of index 3:
The segments denote arcs. We do not use dotted segments here.
}}\label{fig:cases3}
\end{figure}
}

Most of the configurations contain a leaf or a coleaf. Hence, they can be reduced to decorated configurations of smaller index.

Using Propositions~\ref{prop:1circle_multiplicity} and~\ref{prop:kcircle_multiplicity} we can enumerate the diagrams without leaves and coleaves, see Fig.~\ref{fig:cases}.

\begin{figure}[h]
\includegraphics[width=140mm]{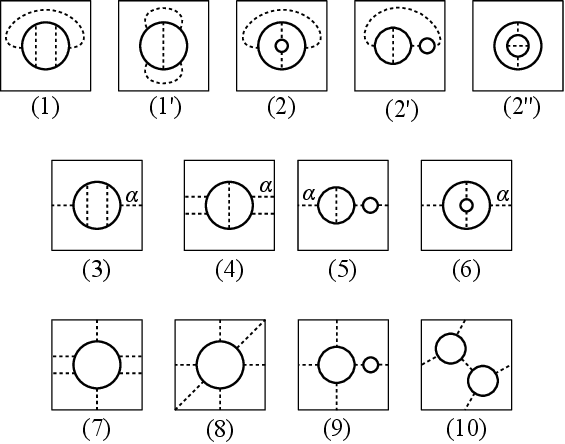}
\caption{{\bf Initial configurations of decorated resolutions of index $3$ with one circle and without leaves and coleaves. The labels of the circles are $x_+$.}}\label{fig:cases}
\end{figure}

The diagrams (1)--(2'') are local, i.e. can be drawn in a disk of the surface. Note that the diagrams (1) and (1') ((2), (2') and (2'')) are isotopic if one considers them as diagrams in the sphere $S^2$. Then diagrams (3)--(6) are parameterized by the homotopy class $\alpha$ of a nontrivial simple loop in the sphere. The diagrams (7)--(8) contain a pair of interlaced outer chords (quasi-ladybug configuration) and can occur only when $F=T^2$ is the torus.

\section{Moduli spaces}\label{mod}

We prove Theorem \ref{thm:moduli_system_existence} in this section.

We define the moduli spaces $\MM(D, x, y)$ by induction on the index $n$ of decorated resolution configuration $(D, x, y)$.

\subsection{Case $n=1$}\label{subseichi}
 We set $\MM(D, x, y)\simeq \MM_{\CC_C(1)}(\bar 1, \bar 0)$ to be one point.

\subsection{Case $n=2$}\label{subseni}
 The moduli space $\MM_{\CC_C(2)}(\bar 1, \bar 0)$ can be identified with the segment $I=[0,1]$.

 In all cases except the ladybug and quasi-ladybug configurations (see Fig.~\ref{fig:ladybug_cases}) there are four labeled resolution configurations in $(D, x, y)$ that correspond to the vertices of a square, i.e. the objects of $\CC_C(2)$. Thus, we can identify the moduli space $\MM(D, x, y)$ with $\MM_{\CC_C(2)}(\bar 1, \bar 0)=I$.

 In a (quasi)-ladybug configuration the boundary $\partial\MM(D, x, y)$ consists of four points $a,b,c,d$ which correspond to the paths in the diagram of the decorated resolution configuration, see Fig.~\ref{fig:ladybug_right} and~\ref{fig:quasi-ladybug_pairs}. By induction, the points $a$ and $b$ project to one end of the segment $I$, and $c$ and $d$ project to the other end. We must extend this projection to 
the $2$-fold  covering 
over $I$. There are two ways to do this, and we must choose one of them.

\begin{figure}[h]
\centering\includegraphics[width=0.4\textwidth]{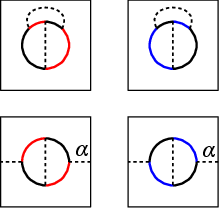}
\caption{Right pairs (right column) and left pairs (left column) in ladybug resolution configurations}\label{fig:ladybug_right_left_pairs}
\end{figure}

 For a ladybug configuration we can use left-right pair convention from the paper~\cite{LSk}. The ends of the arcs splits the cycle of the decorated resolution configuration into four segments. For the {\em right pairs}, we take the segments that start in the endpoint of the arc which goes to the right, and end in the endpoint of the arc which goes to the left, see Fig.~\ref{fig:ladybug_right_left_pairs}. The we pair the labeled resolution configurations which have the same labels on the distinguished segments of the cycle: $a$ with $d$, $b$ with $c$. The moduli space $\MM(D, x, y)$ is the disjoint union of segments $ad$ and $bc$, see Fig.~\ref{fig:ladybug_moduli_space}.

 Analogously, the moduli space for the left pairs can be defined.

    \begin{figure}[h]
\centering\includegraphics[width=0.5\textwidth]{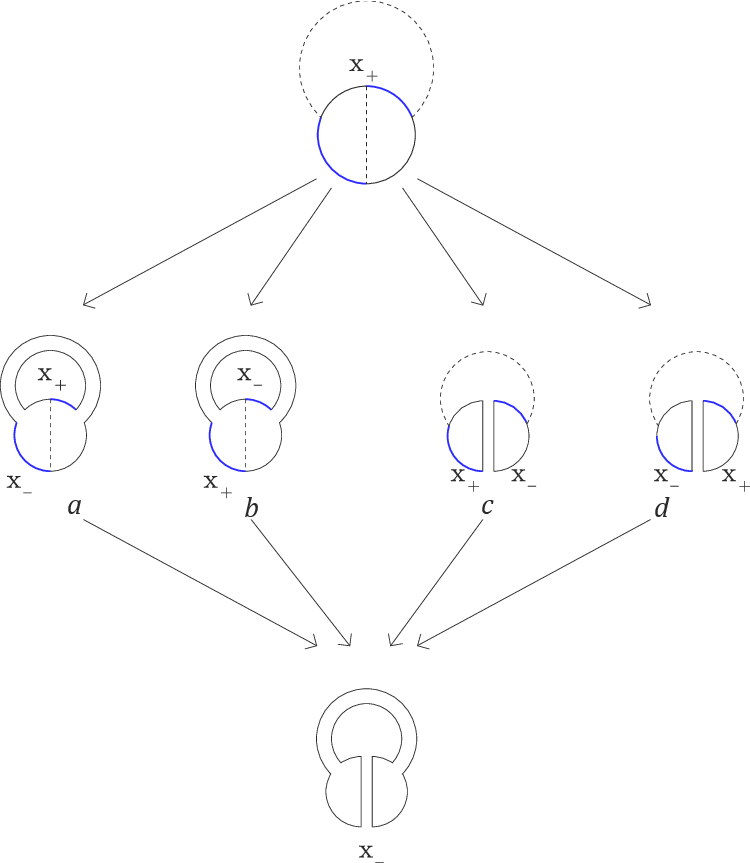}
\caption{Decorated diagram in a ladybug case. The blue segments are the right pairs in the cycle.}\label{fig:ladybug_right}
\end{figure}

\begin{figure}[h]
\centering\includegraphics[width=0.15\textwidth]{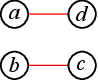}
\caption{The moduli space in (quasi-)ladybug case}\label{fig:ladybug_moduli_space}
\end{figure}

In the quasi-ladybug case, the cycle must be contractible (otherwise the decorated resolution configuration will be empty due to the homotopical grading). Then the orientation of the torus induces a canonical orientation of the cycle.

Any arc with the ends on the cycle determines a homology class in $H_1(T^2,\Z)$ (we take the class of the loop that the arc becomes after contraction the cycle to a point).

Let us define an analogue of right pairs in the quasi-ladybug case. 
Fix a prime element $\lambda\in H_1(T^2,\Z)$. \label{pageprime} 
It defines a simple curve in the torus.  Choose a class $\mu$ such that  $\lambda\cdot\mu=1$. Then $\lambda, \mu$ is a basis of $H_1(T^2,\Z)$.

Any arc $a$ with ends on the cycle determines a homology class in $H_1(T^2,\Z)$. Then $a=p\lambda+q\mu$, $p,q\in\Z$. We assign the number $-\frac pq$ to the arc $a$
 (if $a=\pm\lambda$ we assign $-\infty$ to $a$). \label{pagemaimuge}
 This numbering defines an order on the arcs of the decorated resolution configuration. Informally speaking, we number the arcs moving counterclockwise on the cycle, starting from an endpoint of the longitude on the cycle, see Fig.~\ref{fig:quasi-ladybug_order_pairs}.

In a quasi-ladybug decorated resolution configuration we take the segments of the cycle which start at the arc with the bigger number and end at the arc the smaller number, see Fig.~\ref{fig:quasi-ladybug_order_pairs}.

\begin{figure}[h]
\centering\includegraphics[width=0.2\textwidth]{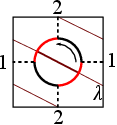}
\caption{The $\lambda$-pair.}\label{fig:quasi-ladybug_order_pairs}
\end{figure}

We shall call this pair of segments of the cycle the {\em $\lambda$-pair}. The other two segments form the {\em $\bar\lambda$-pair}, see Fig.~\ref{fig:quasi-ladybug_lambda_pairs}.

\begin{figure}[h]
\centering\includegraphics[width=0.4\textwidth]{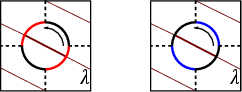}
\caption{The $\lambda$-pairs (left) and the $\bar\lambda$-pairs (right).}\label{fig:quasi-ladybug_lambda_pairs}
\end{figure}

Note that the $\lambda$-pair does not depend on the orientation of the cycle in the case when the arcs differ from $\lambda$ (as elements in $H_1(T^2,\Z)$). The segments which form the $\lambda$-pair can be thought of as the segments of the cycle (with ends at the arcs) which intersect the longitude $\lambda$. But the orientation matters when one of the arcs is homologous to $\lambda$, 
see Fig. 
\ref{fig:quasi-ladybug_order_pairs0}. 

\begin{figure}[h]
\centering\includegraphics[width=0.4\textwidth]{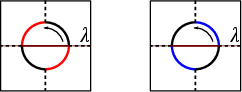}
\caption{The $\lambda$ and $\bar\lambda$-pairs when one of the arcs is homologous to $\lambda$.}\label{fig:quasi-ladybug_order_pairs0}
\end{figure}

We pair the labeled resolution configurations which have the same labels on the distinguished segments of the cycle ($a$ with $d$, $b$ with $c$), see Fig.~\ref{fig:quasi-ladybug_pairs}. Thus, we get the moduli space $\MM(D, x, y)$ to be equal the disjoint union of segments $ad$ and $bc$, see Fig.~\ref{fig:ladybug_moduli_space}.

   \begin{figure}[h]
\centering\includegraphics[width=0.5\textwidth]{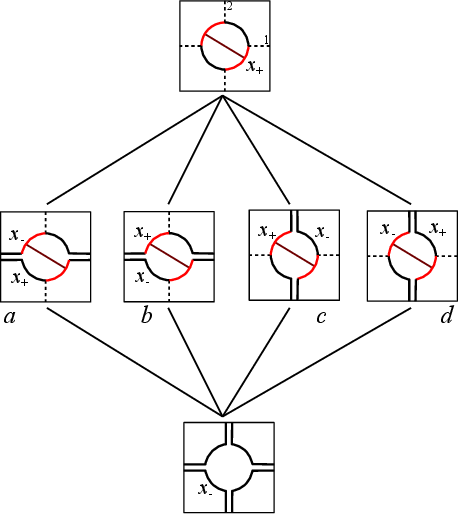}
\caption{Decorated diagram in the quasi-ladybug case. The red segments are the $\lambda$-pairs.}\label{fig:quasi-ladybug_pairs}
\end{figure}

Thus, in order to define the moduli spaces of index $2$ 
we need to fix choices $\pi(0),\pi(\alpha)\in\{l,r\}$ 
of left/right pairs for ladybug configurations $L_0$ and $L_\alpha$, 
and choices 
$\pi(\alpha,\beta)=\pi(\beta,\alpha)\in\{\lambda,\bar\lambda\}$ \label{pageonaji?}
between $\lambda$ and $\bar\lambda$-pairs for 
quasi-ladybug configurations $Q_{\alpha,\beta}$. 
This set of choices $\pi$ is called a {\em pairing}.

Let $\pi_{r,\lambda}$ \label{pagertte}   
%
denote the pairing such that $\pi_{r,\lambda}(0)=r$, $\pi_{r,\lambda}(\alpha)=r$ for all $\alpha$, and $\pi_{r,\lambda}(\alpha,\beta)=\lambda$ for all $\alpha,\beta$. Analogously, one defines the pairings $\pi_{l,\lambda}$, $\pi_{r,\bar\lambda}$, $\pi_{l,\bar\lambda}$. We will call these pairings {\em regular}.

From now on, we fix some pairing $\pi$ (regular or not) and define the moduli spaces $\MM(D, x, y)$ of index $2$ according to it.

\subsection{Case $n=3$}\label{subsesan}
The moduli space $\MM_{\CC_C(3)}(\bar 1, \bar 0)$ is a hexagon.

Let $(D, x, y)$ be a decorated resolution configuration of index $3$. The boundary $\partial\MM(D, x, y)$ is defined by induction, and we need to extend it to a moduli space $\MM(D, x, y)$ and a covering $\MM(D, x, y)\to \MM_{\CC_C(3)}(\bar 1,\bar 0)$.

If the decorated resolution configuration does not include a (quasi)-ladybug configuration, there is a bijection between the labeled resolutions of $(D, x, y)$ and the vertices of the $3$-cube, and we set $\MM(D, x, y)=\MM_{\CC_C(3)}(\bar 1,\bar 0)$.

If the decorated resolution configuration includes only a ladybug of type $L_0$, we are in the classical situation that was treated in~\cite{LSk}. The boundary $\partial\MM(D, x, y)$ is (the boundary of) two hexagons which project naturally to $\partial\MM_{\CC_C(3)}(\bar 1, \bar 0)$. We extend this projection to the trivial $2$-fold covering $\MM(D, x, y)\to \MM_{\CC_C(3)}(\bar 1,\bar 0)$.

If the decorated resolution configuration includes a ladybug of type $L_\alpha$, then it can not contain configurations of type $L_0$ or  $Q_{\alpha,\beta}$ (otherwise the decorated resolution configuration is empty because of homotopical grading). Then we have the following initial labeled resolution configurations, see Fig.~\ref{fig:cases} (3)--(6). In all cases the homotopical grading does not interferes the poset structure of the decorated resolution configuration. Hence, we can treat the decorated configuration as if it were planar. Thus, the boundary $\partial\MM(D, x, y)$ forms two hexagons as in the classical ladybug case, and the moduli space $\MM(D, x, y)$ is defined as the trivial $2$-fold covering space over $\MM_{\CC_C(3)}(\bar 1,\bar 0)$.



Let us consider the case when the decorated resolution configuration includes a quasi-ladybug configuration, 
see Fig.~\ref{fig:cases} (7)--(10).


In the diagrams 
(7), (9), a quasi-ladybug configuration of type $L_{\alpha,\beta}$ appears twice in the decorated resolution configuration. In these two cases the homotopical grading does not impose additional restrictions to the poset structure of the decorated resolution configuration. Then we can consider the horizontal (if $\pi(\alpha,\beta)=\lambda$) or vertical (if $\pi(\alpha,\beta)=\bar\lambda$) arcs as inner and work with the decorated configuration as with one including ladybug configuration of type $L_\beta$ (or $L_\alpha$). Thus, the moduli space $\MM(D, x, y)$ is the trivial $2$-fold covering over the hexagon $\MM_{\CC_C(3)}(\bar 1,\bar 0)$.

\begin{figure}[h]
\centering\includegraphics[width=0.12\textwidth]{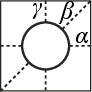}
\caption{Initial labeled resolution configuration for the decorated configuration $DQ(\alpha,\beta,\gamma)$}\label{fig:dodeca_quasiladybug}
\end{figure}

Let us consider the diagram in Fig.~\ref{fig:cases} (8). Denote the homology classes of the arcs by $\alpha$, $\beta$, $\gamma$, see Fig.~\ref{fig:dodeca_quasiladybug}. Then the decorated resolution configuration includes three quasi-ladybug configurations of type $Q_{\alpha,\beta}$, $Q_{\beta,\gamma}$ and $Q_{\alpha,\gamma}$, see Fig.~\ref{fig:case_8l}.

\begin{figure}[h]
\centering\includegraphics[width=0.7\textwidth]{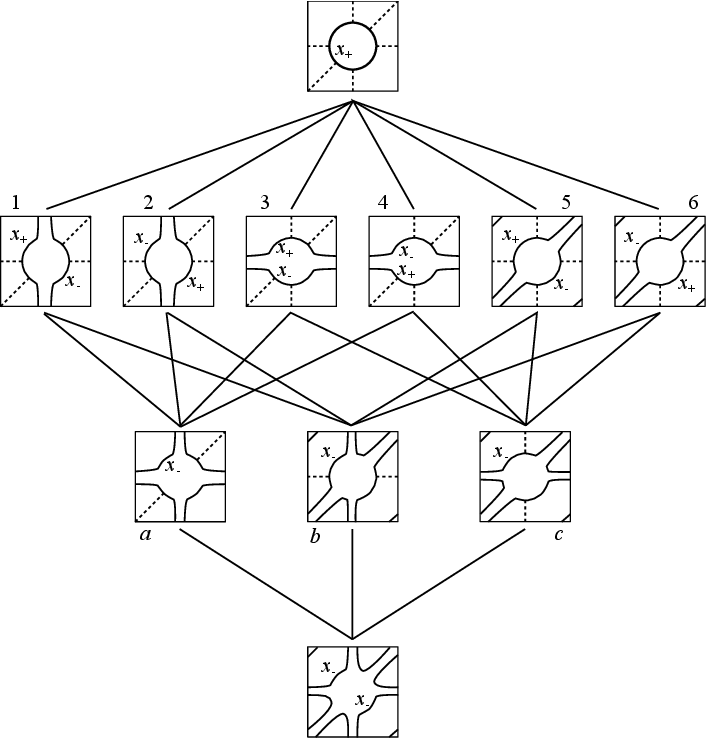}
\caption{The decorated diagram $DQ(\alpha,\beta,\gamma)$ of index $3$}\label{fig:case_8l}
\end{figure}

The boundary $\partial\MM(D, x, y)$ consists of $12$ vertices which corresponds to paths from the initial to the final labeled resolution configuration in the diagram of the decorated configuration in Fig.~\ref{fig:case_8l}. Any path is determined by the intermediate labeled configuration, for example, $1a$, $4c$ etc. An edge of $\partial\MM(D, x, y)$ corresponds to switching between two paths with a common edge, see Fig.~\ref{case_8_moduli_space1}.
There are six fixed edges $1a-1b$, $2a-2b$, $3a-3c$, $4a-4c$, $5b-5c$, $6b-6c$. The other six edges of $\partial\MM(D, x, y)$ depend on the chosen pairing $\pi$.

\begin{figure}[h]
\includegraphics[width=0.4\textwidth]{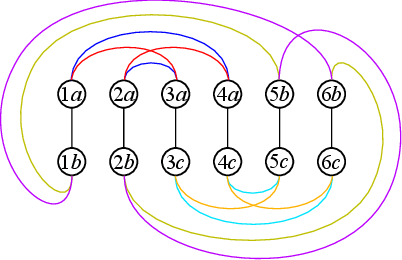}
\caption{Pairings in $\partial\MM(D, x, y)$. Red, orange and gold edges correspond to $\lambda$-pairs. Blue, azure and magenta labels correspond to $\bar\lambda$-pairs. The black edges are fixed.}\label{case_8_moduli_space1}
\end{figure}

Let us first consider the case when all pairs are $\lambda$-pairs: $\pi(\alpha,\beta)=\pi(\beta,\gamma)=\pi(\alpha,\gamma)=\lambda$.

Using the fixed longitude $\lambda$, we determine the $\lambda$-pairs for any pair of arcs, see Fig.~\ref{fig:case_8_order_pairs}.

   \begin{figure}[h]
\centering\includegraphics[width=0.25\textwidth]{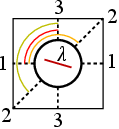}
\caption{Segments of the $\lambda$-pairs.}\label{fig:case_8_order_pairs}
\end{figure}

We use the $\lambda$-pairs to find the moduli space of the quasi-ladybug faces. The moduli space of the face containing the labeled resolution configuration $a$ consists of segments $1a-3a$ and $2a-4a$, the other two faces give the segments $1b-5b$, $2b-6b$, $3c-5c$ and $4c-6c$. Thus, the boundary $\partial\MM(D, x, y)$ forms two hexagons, see Fig.~\ref{fig:case_8_order_modulus}. Since the space $\partial\MM_{\CC_C(3)}(\bar 1,\bar 0)$ is a hexagon, the covering map on the boundary is trivial.

   \begin{figure}[h]
\centering\includegraphics[width=0.35\textwidth]{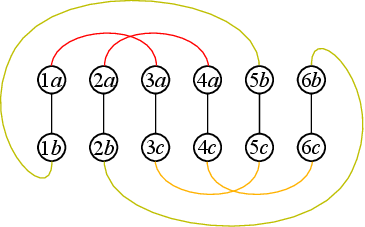}
\caption{The boundary $\partial\MM(D, x, y)$ of the moduli space for the $\lambda$-pairing.}\label{fig:case_8_order_modulus}
\end{figure}

\begin{figure}[h]
\centering\includegraphics[width=0.15\textwidth]{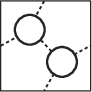}
\caption{Initial labeled resolution configuration for the decorated configuration $DQ'(\alpha,\beta,\gamma)$. The classes $\alpha,\beta,\gamma$ are the ones of loops that appear after contracting the two cycles to points and deleting one of the three arcs.}\label{fig:dodeca_quasiladybug1}
\end{figure}

Let us consider the diagram in Fig.~\ref{fig:dodeca_quasiladybug1}. This decorated resolution configuration $DQ'(\alpha,\beta,\gamma)$ (see Fig.~\ref{fig:case_8_dual_lambda}) is dual to the one considered above. Thus, it has the isomorphic moduli space $\MM(D, x, y)$: two hexagons if the number of $\bar\lambda$-pairings among $\pi(\alpha,\beta)$, $\pi(\beta,\gamma)$, $\pi(\alpha,\gamma)$ is even, and 
a 
dodecagon branched 
over the hexagon $\MM_{\CC_C(3)}(\bar 1, \bar 0)$ if the number $\bar\lambda$-pairings is odd.

Thus, we see that the regular pairings $\pi_{r,\lambda}$, $\pi_{l,\lambda}$ induce trivial coverings of the moduli spaces $\MM(D, x, y)\to\MM_{\CC_C(3)}(\bar 1, \bar 0)$ of index three.

\begin{figure}[h]
\centering\includegraphics[width=0.7\textwidth]{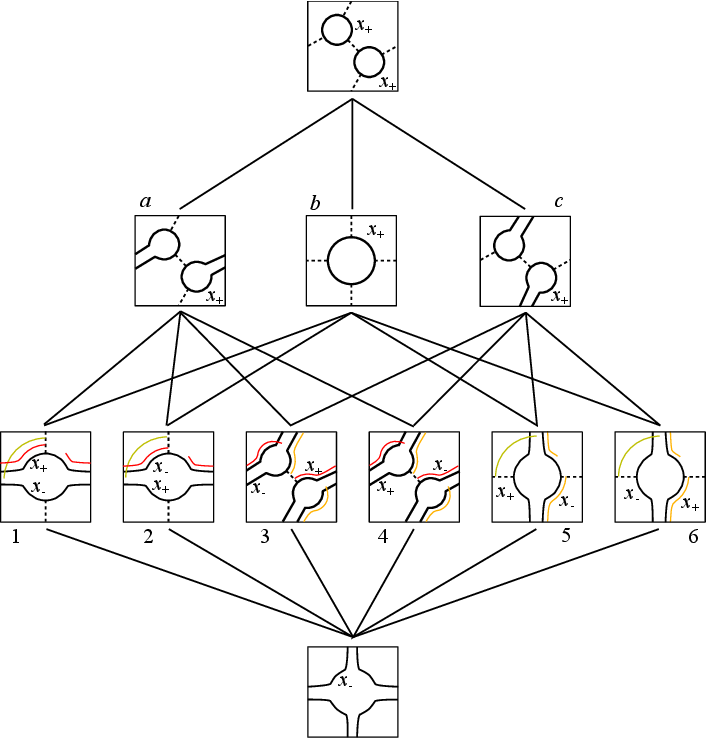}
\caption{The decorated diagram $DQ'(\alpha,\beta,\gamma)$ of index $3$}\label{fig:case_8_dual_lambda}
\end{figure}

\subsection{Case $n\geqq4$}\label{subseijo}

Since there is no obstruction to extension of trivial covering in higher dimensions (that is, $n>3$), the moduli spaces of regular $\lambda$-pairings are trivial coverings over the cubic moduli spaces.

Thus, we have proved Theorem \ref{thm:moduli_system_existence}.


\np
\section{The second Steenrod square operator}\label{sq}

\subsection{The first Steenrod square operator $Sq^1$ }\label{sqq1}%

\h
In \cite{Steenrod,SE}
the Steenrod square $Sq^* (*\in\Z)$ is defined.
Let $X$ and $X'$ be compact CW complexes.
Let $\{C_i\}_{i\in\Z}$  be a chain complex. Assume that $\{C_i\}_{i\in\Z}$ is associated with both a CW decomposition on $X$
and a CW decomposition  on $X'$.
It is well-known that $Sq^1(X)=Sq^1(X')$
(see e.g. \cite[Introduction]{LSs})
and that $Sq^2(X)$ and $Sq^2(X')$ are different in general
(see e.g. \cite{Seed}).

Therefore, $Sq^1$ is not informative as a link invariant. Let us pass to $Sq^2$.

\bigbreak
\subsection{The second Steenrod square operator $Sq^2$ }\label{sqq}

\h
We review the definition of the second Steenrod square~\cite{Steenrod, SE}.

\begin{definition}\label{secondsq}
Let $K_m$ be the Eilenberg--MacLane space $K(\Z_2,m)$ for any natural number $m>1$.
By definition, $K_m$ is connected and
 $\pi_i(K_m)\cong\Z_2$ (respectively, $0$)
if $i=m$ (respectively, $i\neq m$), where $ i\ge1$.
It is known that $H^{m+2}(K_m;\Z_2)\cong\Z_2$. Denote the generator of
$H^{m+2}(K;\Z_2)\cong\Z_2$ by $\xi$.

Let $X$ be a CW complex and $[X,K_m]$ be the set of all homotopy classes of continuous maps $X\to K_m$.
Then $[X,K_m]=H^m(X;\Z_2)$.

For an arbitrary element $x\in H^m(X;\Z_2)$, take a continuous map $f_x:X\to K_m$ which corresponds to the class $x$.
Define the {\em second Steenrod square} $Sq^2(x)$ of $x$ to be $f^\ast_x(\xi)\in H^{m+2}(X;\Z_2)$.
\end{definition}

This definition is reviewed and explained very well  in \cite[section 3.1]{LSs}.

We review an important property of the second Steenrod square operator $Sq^2$, below.

\begin{proposition}\label{sq2}{\rm\bf(\cite[section 12]{Steenrod}.)}
Let $Y$ be any compact CW complex.
Let $Y^{(*)}$ be the $*$-skeleton of $Y (*\in\Z)$.
Let $m\in\Z$. Then the second Steenrod square 
$Sq^2(Y):H^{m}(Y;\Z_2)\to H^{m+2}(Y;\Z_2)$
is determined by the homotopy type of $Y^{(m+2)}/Y^{(m-1)}$.
\end{proposition}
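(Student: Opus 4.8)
The plan is to derive the statement from the naturality and homotopy invariance of $Sq^2$ together with elementary range‑of‑dimension facts about cellular cohomology. Both properties of $Sq^2$ are immediate from Definition~\ref{secondsq}: for $x\in H^m(X;\Z_2)$ one has $Sq^2(x)=f_x^\ast(\xi)$, the pullback of the fixed universal class $\xi\in H^{m+2}(K(\Z_2,m);\Z_2)$ along a classifying map $f_x\colon X\to K(\Z_2,m)$, so $Sq^2$ commutes with induced maps and depends only on the homotopy type of $X$. Set $Z=Y^{(m+2)}/Y^{(m-1)}$ and consider the two natural maps $i\colon Y^{(m+2)}\hookrightarrow Y$ and $q\colon Y^{(m+2)}\to Z$.

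First I would record what $i$ and $q$ do on cohomology in the two relevant degrees. Since $Y$ and $Y^{(m+2)}$ differ only by cells of dimension $\ge m+3$, the cellular cochain complex shows that $i^\ast\colon H^m(Y;\Z_2)\to H^m(Y^{(m+2)};\Z_2)$ is an isomorphism and $i^\ast\colon H^{m+2}(Y;\Z_2)\to H^{m+2}(Y^{(m+2)};\Z_2)$ is injective (it is the inclusion of the cocycles into all $(m+2)$‑cochains, modulo coboundaries). For $q$, apply the long exact cohomology sequence of the pair $(Y^{(m+2)},Y^{(m-1)})$, using $H^k(Y^{(m-1)};\Z_2)=0$ for $k\ge m$ because $Y^{(m-1)}$ is $(m-1)$‑dimensional; this gives that $q^\ast\colon \tilde H^m(Z;\Z_2)\to H^m(Y^{(m+2)};\Z_2)$ is surjective and $q^\ast\colon \tilde H^{m+2}(Z;\Z_2)\to H^{m+2}(Y^{(m+2)};\Z_2)$ is an isomorphism. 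Composing, I obtain a natural surjection $\rho=(i^\ast)^{-1}\circ q^\ast\colon \tilde H^m(Z;\Z_2)\twoheadrightarrow H^m(Y;\Z_2)$ and a natural injection $\iota=(q^\ast)^{-1}\circ i^\ast\colon H^{m+2}(Y;\Z_2)\hookrightarrow \tilde H^{m+2}(Z;\Z_2)$.

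Then naturality of $Sq^2$ applied to $i$ and $q$ shows that $\iota\circ Sq^2_Y=Sq^2_Z\circ\rho$ as maps $\tilde H^m(Z;\Z_2)\to \tilde H^{m+2}(Z;\Z_2)$; this is a one‑line diagram chase through $H^\ast(Y^{(m+2)};\Z_2)$. Since $\rho$ is surjective and $\iota$ is injective, this identity recovers $Sq^2_Y\colon H^m(Y;\Z_2)\to H^{m+2}(Y;\Z_2)$ uniquely from $Sq^2_Z$ and the canonical maps $\rho,\iota$; and $Sq^2_Z$ depends only on the homotopy type of $Z=Y^{(m+2)}/Y^{(m-1)}$ by the homotopy invariance noted above. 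This is exactly the assertion of the proposition.

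The only real point requiring care is pinning down the precise sense of ``is determined by'': one has to carry along the natural surjection on the source and the natural injection on the target so that the reduction to $Z$ is genuine rather than merely up to an unspecified identification. The supporting inputs — that $H^k$ of a CW complex depends only on its cells in dimensions $k-1,k,k+1$, that $H^k(Y^{(m-1)};\Z_2)=0$ for $k\ge m$, and that the reduced operation $Sq^2$ on a space with no positive‑dimensional cells below dimension $m$ is the usual one — are routine and can be quoted.
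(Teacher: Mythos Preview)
The paper does not supply its own proof of this proposition: it is quoted from Steenrod and the reader is referred to \cite[section 3.1]{LSs} for an exposition. So there is nothing to compare against; your argument simply fills in a proof that the paper omits.

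Your approach is the standard one and is correct in substance. There is, however, a slip in the displayed identity. With your conventions $\rho\colon \tilde H^m(Z)\twoheadrightarrow H^m(Y)$ and $\iota\colon H^{m+2}(Y)\hookrightarrow \tilde H^{m+2}(Z)$, the composite $Sq^2_Z\circ\rho$ does not type-check (the target of $\rho$ is $H^m(Y)$, not $H^m(Z)$), and $\iota\circ Sq^2_Y$ has domain $H^m(Y)$, not $\tilde H^m(Z)$. The identity you actually derive from naturality along $i$ and $q$ is
\[
\iota\circ Sq^2_Y\circ\rho \;=\; Sq^2_Z
\qquad\text{as maps }\ \tilde H^m(Z;\Z_2)\longrightarrow \tilde H^{m+2}(Z;\Z_2),
\]
and this is what you then use: given $y\in H^m(Y)$, lift to $z\in\tilde H^m(Z)$ with $\rho(z)=y$ and read off $Sq^2_Y(y)=\iota^{-1}\bigl(Sq^2_Z(z)\bigr)$, which is well defined because $\iota$ is injective and independent of the lift because $\rho$ is surjective and the right-hand side agrees with $\iota(Sq^2_Y(y))$. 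With that correction the argument is complete. Your closing remark about the precise meaning of ``determined by'' --- that one must retain $\rho$ and $\iota$ along with the homotopy type of $Z$ --- is exactly right and matches how the statement is used downstream.
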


This proposition is reviewed and explained very well  in \cite[section 3.1]{LSs}.

\subsection{The second Steenrod  square $Sq^2$ for links in the thickened torus
}\label{vitaC}


\h
Let $\mathcal L$ be a link in the thickened torus.
Make a
 Khovanov-Liphitzs-Sarkar stable homotopy type
for a triple of $\mathcal L$,  a degree 1 homology class $\lambda\in H_1(T^2;\Z_2)$
and the right-left choice.
Its second Steenrod square gives 
an isotopy invariant of $\mathcal L$.

Note: There are infinitely many choices of degree 1 homology classes.
However,
when we are given two link diagrams
and we compare the two,
we only have to calculate the second
Steenrod square in a finite cases.
We put emphasis on 
the fact that our invariant, the set of  Steenrod squares,
is calculable.
\\

In the case of links in $S^3$,
in \cite{LSs} Lipshitz and Sarkar showed a way to calculate 
$Sq^2$
by using classical link diagrams. 
Seed \cite{Seed} calculated the second Steenrod  square for links in $S^3$ by making a computer program of their method in \cite{LSs}. He found the following explicit pair.

\begin{theorem}\label{Seedrei}  {\bf (\cite{Seed})}
There are links $\mathcal J$ and $\mathcal J'$ in $S^3$ such that
the Khovanov homologies are the same, but such that the second Steenrod squares are different.

Therefore
there are links $\mathcal J$ and $\mathcal J'$ in $S^3$ such that
the Khovanov homologies are the same, but the Khovanov stable homotopy types are different.
\end{theorem}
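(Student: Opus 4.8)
The statement is Seed's, so the plan is to verify it by an explicit computation of the second Steenrod square and then to deduce the homotopy-type consequence formally. First I would recall that, by Proposition~\ref{sq2}, the operation $Sq^2\colon H^m(\mathcal X(L);\Z_2)\to H^{m+2}(\mathcal X(L);\Z_2)$ depends only on the two-step truncation $\mathcal X(L)^{(m+2)}/\mathcal X(L)^{(m-1)}$ of the Khovanov stable homotopy type. In the Lipshitz--Sarkar model this truncation is controlled by the moduli spaces $\mathcal M(D,x,y)$ of index at most $3$ together with their coherent framings; the only place where the homotopy type of this piece fails to be a formal consequence of the chain complex $KC(L)$ is at the index-$2$ ladybug configurations $L_0$, whose moduli space is a disjoint union of two segments matched by the left/right pairing, and at the index-$3$ configurations built from them. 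Thus $Sq^2$ on $\Z_2$-Khovanov homology is computed by an explicit combinatorial recipe extracted from the ladybug matching, which is precisely the content of~\cite{LSs}.

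Concretely I would: (i) fix an oriented diagram $L$ with $n$ ordered crossings, build the resolution cube $\{0,1\}^n$ and the Khovanov chain complex $KC(L;\Z_2)$; (ii) for each index-$2$ and index-$3$ sub-box, detect whether the associated decorated resolution configuration is a ladybug (or a composite thereof) and, if so, record the right-pair matching of its labelled resolutions; (iii) assemble from this data the secondary cochain operation on $H^*(KC(L);\Z_2)$ predicted by the framed-flow-category formula of~\cite{LSs}, obtaining an explicit matrix for $Sq^2$ in each bidegree; (iv) run this algorithm over a table of links (say, all links of small crossing number) and search for a pair $\mathcal J,\mathcal J'$ whose bigraded Khovanov homologies over $\Z$ (hence, by universal coefficients, over $\Z_2$) coincide but whose $Sq^2$ matrices differ. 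Such a pair exists and was found by Seed's computer implementation~\cite{Seed}, which exhibits an explicit example.

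For the ``therefore'' clause: $Sq^2$ is a stable cohomology operation, i.e.\ it is natural and commutes with suspension, so if the Khovanov stable homotopy types $\mathcal X(\mathcal J)$ and $\mathcal X(\mathcal J')$ were stably homotopy equivalent then the induced isomorphism on reduced cohomology would respect the bigrading and intertwine the two $Sq^2$ operators; they would then be conjugate, contradicting step (iv). Hence $\mathcal X(\mathcal J)\not\simeq\mathcal X(\mathcal J')$ although $KH(\mathcal J)\cong KH(\mathcal J')$, which is the second assertion (and it is Theorem~\ref{thm:stable_khovanov_homology} that gives meaning to ``the Khovanov stable homotopy type of a link'').

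The main obstacle is step (iii): faithfully implementing the Lipshitz--Sarkar $Sq^2$ formula requires careful bookkeeping of the framed flow category --- in particular of the ladybug matching and of the signs coming from the coherent framing of the cube flow category inductively built in the cube-flow-category example --- so that the resulting operation is genuinely the Steenrod square and not merely a secondary operation with the right bidegree. Step (iv) is in addition a real search problem, since coincidences of Khovanov homology between distinct links are already uncommon, and among those a further coincidence of $Sq^2$ is the generic outcome, so one must go to sufficiently many crossings for a distinguishing pair to appear. Both difficulties are computational rather than conceptual, and they are resolved in~\cite{Seed}.
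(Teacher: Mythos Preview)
Your outline is correct, and in fact it supplies more than the paper does: the paper gives no proof of this statement at all, citing it as Seed's result from~\cite{Seed} and using it as a black box in the proof of Main Theorem~\ref{main}.(3). Your sketch of the computational pipeline (ladybug matchings feeding the Lipshitz--Sarkar $Sq^2$ formula from~\cite{LSs}, followed by a search over link tables) is exactly the method of~\cite{Seed}, and your deduction of the ``therefore'' clause from naturality and stability of $Sq^2$ is the standard argument, so there is nothing to correct.
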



It is very natural to ask the following question. Are there a pair of links in the thickened torus such that the homotopical Khovanov homologies are the same, but such that the second Steenrod squares are different?

Note that all links in $B^3$ are regarded as links in the thickened torus if we regard $B^3$ is embedded in the thickened torus. 
By 
Theorem \ref{Seedrei},
the above question has an affirmative answer.
\label{proof:homotopy_type_stronger_homology} 

Furthermore it is very natural to ask the following question. Are there a pair of links in the thickened torus which are not embedded in $B^3$ such that the homotopical Khovanov homologies are the same, but such that the second Steenrod squares are different?
 The answer is a main result.
See the following proof.

\bb

\h{\bf Proof of Main Theorem \ref{main}.}
Let $L$ be a link in the thickened torus. Consider the moduli system $\mathcal M$ on the torus constructed in Section \ref{mod}. 
Then ${\mathcal X}_\MM (L)$ is a Khovanov homotopy type with cubic moduli spaces which proves Main Theorem \ref{main}.(1).

An easy example which proves Main Theorem \ref{main}.(2) is given above in the page~\pageref{proof:homotopy_type_stronger_homology}.
We show a little more complicated example below, which is an alternative proof of Main Theorem \ref{main}.(2).

Let $C$ be a  circle in $T^2$ which represents
a
nontrivial
element of
$H_1(F;\Z)$.
Regard $A$ as a knot in $F\x[-1,1]$.
Take
$\mathcal J$ and $\mathcal J'$
in a 3-ball $B$ embedded in $F\x[-1,1]$,
which are written in Theorem \ref{Seedrei}.
Assume that $C\cap B=\emptyset$.
Make a disjoint 2-component link
which is made from $C$ and
$\mathcal {J}$ (respectively, $\mathcal {J'}$).
By Theorem \ref{Seedrei}, 
these two links have different Steenrod squares and the same Khovanov homology.  
See \cite[\S10.2]{LSk} for Khovanov-Lipshitz-Sarkar stable homotopy type
of disjoint links.

In this case, we do not have a quasi-ladybug configuration.
The right pair and the left one of ladybug situations
give the same Steenrod second square
by explicit calculus which uses that about the classical link diagram $K$.
(Note that the Steenrod square is only one element in this case.)
\qed\\

The above example is just the beginning of many possible applications of the result in this paper.
Further applications require deeper computations of the virtual Khovanov homology
and will be the subject of a subsequent paper.\\



\np
\noindent
Louis H. Kauffman

\noindent
Department of Mathematics, Statistics and Computer Science

\noindent
University of Illinois at Chicago

\noindent
851 South Morgan Street

\noindent
Chicago, Illinois 60607-7045

\noindent
USA






\noindent
kauffman@uic.edu
\\

\h Igor Mikhailovich Nikonov

\h Department of Mechanics and Mathematics

\h Lomonosov Moscow State University

\h  Leninskiye Gory, GSP-1

\h Moscow, 119991

\h Russia

\h nikonov@mech.math.msu.su
\\

\noindent
Eiji Ogasa

\noindent
Meijigakuin University, Computer Science

\noindent
Yokohama, Kanagawa, 244-8539

\noindent
Japan

\noindent
pqr100pqr100@yahoo.co.jp

\noindent
ogasa@mail1.meijigkakuin.ac.jp

}


\begin{thebibliography}{ABCD}
{\huge
\bibitem{APS}
M. M. Asaeda, J. H. Przytycki, and A. S. Sikora;
Categorification of the Kauffman bracket skein
module of $I$-bundles over surfaces,
{\it Algebraic \& Geometric Topology}
4 (2004) 1177–1210 ATG




\bibitem{B}
D. Bar-Natan:
On Khovanov's categorification of the Jones polynomial,
{\it Algebr. Geom. Topol.} 2(2002), 337–370 (electronic). MR 1917056 (2003h:57014).


\bibitem{Drobotukhina}
Yu.V. Drobotukhina: 
An analogue of the Jones polynomial for links in $\R P^3$ and a generalization of the Kauffman-Murasugi theorem {\it Leningrad Math. J.} (1991) 613-630.


\bibitem{Bourgoin}
M. O. Bourgoin: 
Twisted Link Theory, 
arXiv:math/0608233  (2006)





\bibitem{DKK} 
H A Dye, A Kaestner, and L H Kauffman:  
Khovanov Homology, Lee Homology and a Rasmussen Invariant for Virtual Knots, 
{\it Journal of Knot Theory and Its Ramifications} 26 (2017).




\bibitem{Jones} 
 V. F. R. Jones: Hecke Algebra representations of braid groups and link   
{\it Ann. of Math.} 126 (1987) 335-388. 




\bibitem{Kauffmanstate}
L. H. Kauffman:
State models and the Jones polynomial,
{\it Topology}  26 (1987) 395-407.




\bibitem{Kauffman1} 
L. H. Kauffman: 
Talks at MSRI Meeting in January 1997, AMS Meeting at University of Maryland, College Park in March 1997, Isaac Newton Institute Lecture in November 1997, Knots in Hellas Meeting in Delphi, Greece in July 1998, APCTP-NANKAI Symposium on Yang-Baxter Systems, Non-Linear Models and Applications at Seoul, Korea in October 1998

\bibitem{Kauffman} 
L. H. Kauffman: 
Virtual Knot Theory, 
{\it Europ. J. Combinatorics} (1999) 20, 663–691, 
{\it Article No. eujc}.1999.0314, 
{\it Available online at} http://www.idealibrary.com 
math/9811028 [math.GT].


\bibitem{Kauffmani} 
L. H. Kauffman: 
Introduction to virtual knot theory,  
{\it J. Knot Theory Ramifications} 21 (2012), no. 13, 1240007, 37 pp.







\bibitem{KauffmanNikonovOgasa} L. H. Kauffman, I. M. Nikonov,  and  E. Ogasa:  Khovanov-Lipshitz-Sarkar homotopy type for links in thickened higher genus surfaces   arXiv: 2007.09241[math.GT].





\bibitem{K}   M. Khovanov:  A categorification of the Jones polynomial,
{\it Duke Math. J.} 101 (2000), no. 3, 359–426. MR 1740682 (2002j:57025).



\bibitem{LSk} R. Lipshitz and S. Sarkar: A Khovanov stable homotopy type,
{\it J. Amer. Math. Soc.} 27 (2014), no. 4, 983–1042. MR 3230817



\bibitem{LSs}
R. Lipshitz and S. Sarkar:
A Steenrod square on Khovanov homology,
{\it J. Topol.} 7 (2014), no. 3, 817–848. MR 3252965








\bibitem{Man}
V O Manturov: 
 Khovanov homology for virtual links with arbitrary coeficients, 
{\it Journal of Knot Theory and Its Ramifications} 16 (2007), 
arXiv:math/0601152.


\bibitem{MN}
V. O. Manturov and I. M. Nikonov:
Homotopical Khovanov homology,
{\it Journal of Knot Theory and Its Ramifications}
 24 (2015) 1541003.

\bibitem{Igor}
 I. M. Nikonov: 
Virtual index cocycles and invariants of virtual links, 
arXiv:2011.00248



\bibitem{RT} 
N. Reshetikhin and V. G. Turaev: 
Invariants of 3-manifolds via link polynomials and quantum groups, 
{\it Inventiones mathematicae} 103 (1991) 547–597. 

\bibitem{Ru}  W. Rushworth: Doubled Khovanov Homology, {\it Can. J. Math.} 70 (2018) 1130-1172. 


\bibitem{Seed}  C. Seed:
Computations of the Lipshitz-Sarkar Steenrod square on Khovanov homology,
arXiv:1210.1882.


\bibitem{Steenrod}
N. E. Steenrod:
Cohomology operations, and obstructionsto extending continuous functions,
{\it Advances in Math. 8} (1972)  371-416.

\bibitem{SE}
 N. E. Steenrod (Author) and D. B. A. Epstein (Editor):
Cohomology Operations,
{\it Annals of Mathematics Studies, Princeton University Press}  (1962).


\bibitem{Tub}
D. Tubbenhauer: 
Virtual Khovanov homology using cobordisms, 
{\it J. Knot Theory Ramifications} 23 (2014), no. 9, 1450046, 91 pp. 

\bibitem{Viro} Viro:
Khovanov homology of Signed diagrams 2006 (an unpublished note).

\bibitem{W} E. Witten:  Quantum field theory and the Jones polynomial
{\it Comm. Math. Phys.  } 121 (1989) 351-399. 


}
\end{thebibliography}
\end{document}